\def\diag{ \begin{tikzpicture} \draw[dashed] (-.12,-.12) -- (.42, .42); \end{tikzpicture} }
\newcommand{\Z}{\mathbb{Z}}
\newcommand{\SSYT}{\text{SSYT}}
\newcommand{\coinv}{\text{coinv}}
\newcommand{\bg}{{\bm{\beta}/\bm{\gamma}}}
\newcommand{\I}{\mathbf{I}}
\newcommand{\J}{\mathbf{J}}
\newcommand{\K}{\mathbf{K}}
\renewcommand{\L}{\mathbf{L}}
\newcommand{\0}{\mathbf{0}}
\newcommand{\speciallattice}{
\resizebox{4cm}{!}{
\begin{tikzpicture}[baseline=(current bounding box.center)]
\draw (0,0) -- (0,3); \draw (1,0) -- (1,3); \draw (2,0) -- (2,3); \draw (3,0) -- (3,3); 
\draw (0,0) -- (3,0); \draw (0,1) -- (3,1); \draw (0,2) -- (3,2); \draw (0,3) -- (3,3);
\node[left] at (0,0.5) {$\0$}; \node[left] at (0,1.5) {$\vdots$}; \node[left] at (0,2.5) {$\0$}; 
\node[right] at (3,0.5) {$\0$}; \node[right] at (3,1.5) {$\vdots$}; \node[right] at (3,2.5) {$\0$}; 
\node[above] at (0.5,3) {$\beta(r)$}; \node[above] at (1.5,3) {$\hdots$}; \node[above] at (2.5,3) {$\beta(s)$};
\node[below] at (0.5,0) {$\gamma(r)$}; \node[below] at (1.5,0) {$\hdots$}; \node[below] at (2.5,0) {$\gamma(s)$};
\node at (0.5,0.5) {$x_1$}; \node at (1.5,0.5) {$\hdots$}; \node at (2.5,0.5) {$x_1$};
\node at (0.5,2.5) {$x_n$}; \node at (1.5,2.5) {$\hdots$}; \node at (2.5,2.5) {$x_n$};
\node at (0.5,1.5) {$\vdots$}; \node at (2.5,1.5) {$\vdots$}; 
\end{tikzpicture}}
}
\theoremstyle{plain}
\newtheorem{thm}{Theorem}[section]
\newtheorem*{thm*}{Theorem}
\newtheorem{prop}[thm]{Proposition}
\newtheorem*{prop*}{Proposition}
\newtheorem{cor}[thm]{Corollary}
\newtheorem{lem}[thm]{Lemma}
\newtheorem*{lem*}{Lemma}
\newtheorem{example}[thm]{Example}
\newtheorem{definition}[thm]{Definition}
\newtheorem{remark}[thm]{Remark}
\title{Equivalences of LLT polynomials via lattice paths}
\author{David Keating}
\affil{Department of Mathematics, UC Berkeley \\
{\small\ttfamily dkeating@berkeley.edu}}
\date{}
\begin{document}

\maketitle

\abstract{The LLT polynomials $\mathcal{L}_{\bg}  (X;t)$ are a family of symmetric polynomials indexed by a tuple of (possibly skew-)partitions $\bg= (\beta^{(1)}/\gamma^{(1)},\ldots,\beta^{(k)}/\gamma^{(k)})$.  It has recently been shown that these polynomials can be seen as the partition function of a certain vertex model whose boundary condition is determined by $\bg$. In this paper we describe an algorithm which gives a bijection between the configurations of the vertex model with boundary condition $\bg = (\beta^{(1)}/\gamma^{(1)},\beta^{(2)}/\gamma^{(2)})$ and those with boundary condition $(\bg)_{swap} = (\beta^{(2)}/\gamma^{(2)},\beta^{(1)}/\gamma^{(1)})$. We prove a sufficient condition for when this bijection is weight-preserving up to an overall factor of $t$, which in turn implies that the corresponding LLT polynomials are equal up to the same overall factor. Extending these techniques, we are able to systematically determine linear relations within families of LLT polynomials.}

\section{Introduction}
Originally defined by Lascoux, Leclerc, and Thibon \cite{LLT} as the generating function of a spin statistic on ribbon tableaux, the eponymously named LLT polynomials are a family of symmetric polynomials which can be seen as a $t$-deformation of products of Schur polynomials. In \cite{HHLRU} the LLT polynomials were reformulated as the generating function for an inversion statistic on tuples of semi-standard Young tableaux, with the relationship between ribbon tableaux and tuples of SSYT given by the Stanton-White correspondence \cite{SW}. Most recently, in \cite{shuffle}, the authors used a new formulation of the LLT polynomials in their work on the generalization of the shuffle theorem. We will use this formulation in what follows, and we will refer to them as the {\em coinversion LLT polynomials}.

We study the LLT polynomials from the perspective of vertex models. Vertex models have long been studied in relation to integrable systems and statistical mechanics (see \cite{reshNotes} and references therein). Recently, they have been used to gain new insights on symmetric polynomials and their non-symmetric variants (for example, but by no means an exhaustive list, \cite{BBF,GW,BBBG,BW}). It was shown in \cite{ABW,CGKM} that the LLT polynomials could be expressed as the partition function of a certain vertex model. In \cite{ABW} it was shown that, in fact, the LLT polynomial vertex model was a degeneration of a more general vertex model related to the quantized affine Lie superalgebra $U_q(\widehat{\mathfrak{sl}}(1|n))$. We will not need that level of generality here. 

We say that two LLT polynomials are {\em equivalent} if they are equal up to an overall factor of $t$. In this paper, we use the vertex model structure to prove a sufficient condition for when swapping a pair of partitions in the indexing tuple of an LLT polynomial results in an equivalent LLT polynomial. The following is our main result.
\begin{thm}
If there is a unique non-crossing matching $M$ of the sequence of beads associated to $\bg$, then $\mathcal{L}_{\bg}(X_n;t)$ and $\mathcal{L}_{(\bg)_{swap}}(X_n;t)$ are equivalent. In particular, 
\[
\mathcal{L}_{\bg}(X_n;t) = \left(\prod_{a\in M} w(a)\right) \mathcal{L}_{(\bg)_{swap}}(X_n;t)
\]
where the product is over all arcs $a$ in the matching and the weight of an arc is given by (\ref{eq:arcweight}).
\end{thm}
We then extend our techniques to construct linear relations between certain LLT polynomials. The following theorem demonstrates one possible application of these techniques.
\begin{thm}
For every $\bm{\beta}$ in the family of $\binom{2n}{n}$ partitions given in (\ref{eq:partfam}), the LLT polynomial $\mathcal{L}_{\bm{\beta}}(X_n;t)$ can be written as
\[
\mathcal{L}_{\bm{\beta}}(X_n;t) = \sum_{j=1}^{C_n} t^{n_{j}(\bm{\beta})} g_j(X_n;t)
\]
where $C_n = \frac{1}{n+1}\binom{2n}{n}$ is the $n^{th}$ Catalan number, $n_{j}(\bm{\beta})\in \mathbb{Z}$ for each $j$ and $\bm{\beta}$, and the $g_i$'s are polynomials symmetric in $X_n$.
\end{thm}

The layout of this paper is as follows: In Section \ref{sect:LLT}, we define the coinversion LLT polynomials. We briefly explain how the LLT polynomials can be seen as the partition function of a certain vertex model. In Section \ref{sect:procedure}, we describe an algorithm which selectively swaps the color of certain path segments in a configuration of the vertex model. We use this algorithm to give a bijection between the configurations of the vertex model with boundary condition $\bg = (\beta^{(1)}/\gamma^{(1)},\beta^{(2)}/\gamma^{(2)})$ and those with boundary condition $(\bg)_{swap} = (\beta^{(2)}/\gamma^{(2)},\beta^{(1)}/\gamma^{(1)})$. In Section \ref{sect:weight}, we prove a sufficient condition for when this bijection is weight-preserving up to an explicit overall power of $t$, which implies that the corresponding LLT polynomials are equal up to the same overall factor. In Section \ref{sect:relations}, we show how we can use the tools developed in the previous sections to determine linear relations within families of LLT polynomials.

\noindent

\section{LLT polynomials}\label{sect:LLT}
In this section we give a brief description of the coinversion LLT polynomials. We then review their characterization as lattice paths introduced in \cite{ABW,CGKM}.

Let $\lambda = (\lambda_1 \geq \cdots \geq \lambda_m \geq 0)$ be a partition with $l(\lambda)=m$ parts. Note that we consider our partitions to have a fixed number of parts, but allow for the possibility of parts of size zero.  We associate to $\lambda$ its Young (or Ferrers) diagram $D(\lambda) \subseteq \Z \times \Z$, given as
\[ D(\lambda) = \{(i,j) \mid 1 \leq i \leq \ell(\lambda), \; 1 \leq j \leq \lambda_i \}. \]
We draw our diagrams in French notation as shown in the below example:
\[ \lambda = (4,2,1), \qquad D(\lambda) = 
\ytableausetup{aligntableaux=center}
\begin{ytableau} \\ & \\ & & \bullet & \end{ytableau}. \]
We refer to the elements in $D(\lambda)$ as {\em cells}. The cell marked above has coordinates (1,3). The {\em content} of a cell $u = (i,j)$ in row $i$ and column $j$ of any Young diagram is defined as $c(u) = j-i$. 

A skew-partition is a pair of partitions $\lambda$ and $\mu$ such that $D(\mu)\subseteq D(\lambda)$. We denote it by $\lambda/\mu$. The Young diagram of a skew-partition $\lambda/\mu$ is given by $D(\lambda)-D(\mu)$, the cells are those in the Young diagram of $\lambda$ not in the Young diagram of $\mu$.

A semi-standard filling of a partition of $\lambda$ is a filling of the cells of the Young diagram of $\lambda$ by positive integers such that they are weakly increasing along the rows and strictly increasing along the columns (from left to right, bottom to top). We denote the set of all semi-standard fillings of shape $\lambda$ by $\SSYT(\lambda)$. The analogous definition holds for semi-standard fillings of skew-partitions. 

Given a tuple $\bg = (\beta^{(1)}/\gamma^{(1)}, \ldots, \beta^{(k)}/\gamma^{(k)})$ of skew partitions, define a semi-standard Young tableau $T$ of shape $\bg$ to be a semi-standard Young tableau on each $\beta^{(j)}/\gamma^{(j)}$, that is,
\[ \SSYT(\bg) = \SSYT(\beta^{(1)}/\gamma^{(1)}) \times \cdots \times \SSYT(\beta^{(k)}/\gamma^{(k)}). \]
We can picture this as placing the Young diagrams aligned diagonally ``on content lines" with the first shape in the South-West direction and the last shape in the North-East direction. See Example~\ref{exampleLLT} below.

\begin{example}
	Let $\bg = ((3,1), (2,2,2)/(1,1,1), (1), (2,1)/(2))$. Below is one possible semi-standard filling of $\bg$. The top row labels the contents of each diagonal line.
	
	\ytableausetup{nosmalltableaux}
	\ytableausetup{nobaseline}
	\begin{center}
	\begin{ytableau}
		\none & \none & \none & \none & \none & \none & \none & \none & \none & \none[-3] & \none[-2] & \none[-1] & \none[0] & \none[1] & \none[2] \\			
		\none & \none & \none & \none & \none & \none & \none & \none &\none[\diag] &\none[\diag] &\none[\diag] &\none[\diag] & \none[\diag] & \none[\diag] \\
		\none & \none & \none & \none & \none & \none & \none &\none[\diag] &\none[\diag] & 3 &\none[\diag] &\none[\diag] & \none[\diag] & \none[\diag] \\
		\none & \none & \none & \none & \none &\none &\none[\diag] &\none[\diag] & \none[\diag] & *(lightgray) & *(lightgray) &\none[\diag] & \none[\diag] \\
		\none & \none & \none & \none &\none &\none[\diag] &\none[\diag] &\none[\diag] &\none[\diag] &\none[\diag] &\none[\diag] &\none[\diag] & \none \\
		\none & \none & \none &\none &\none[\diag] &\none[\diag] &\none[\diag] & 7 &\none[\diag] &\none[\diag] &\none[\diag] &\none \\
		\none & \none &\none &\none[\diag] &\none[\diag] &\none[\diag] &\none[\diag] &\none[\diag] &\none[\diag] &\none[\diag] &\none &\none \\
		\none & \none &\none[\diag] & *(lightgray) & 6 &\none[\diag] &\none[\diag] &\none[\diag] &\none[\diag] &\none &\none & \none \\
		\none & \none[\diag] & \none[\diag] & *(lightgray) & 4 &\none[\diag] &\none[\diag] &\none[\diag] &\none &\none & \none & \none \\
		\none[\diag] & \none[\diag] & \none[\diag] & *(lightgray) & 1 &\none[\diag] &\none[\diag] &\none &\none & \none & \none & \none \\
		\none[\diag] & \none[\diag] & \none[\diag] &\none[\diag] &\none[\diag] &\none[\diag] &\none &\none & \none & \none & \none & \none \\
		8 &\none[\diag] &\none[\diag] &\none[\diag] &\none[\diag] &\none &\none & \none & \none & \none & \none & \none \\
		2 & 5 & 9 &\none[\diag] &\none &\none & \none & \none & \none & \none & \none & \none \\
	\end{ytableau}	
	\end{center}
	\label{exampleLLT}
\end{example}

Given a tuple $\bg$ of skew partitions, we say that three cells $u, v, w$ in the content-aligned Young diagrams of the skew partitions form a {\em triple} of $\bg$ if 
\begin{enumerate}
\item[i] $v \in \bg$, 
\item[ii] they are situated as below,
\begin{equation} \label{triple}
\ytableausetup{nobaseline}
\begin{ytableau}
\none & \none & \none &\none & u & w  \\
\none & \none & \none & \none & \none[\diag]  \\
\none & \none & \none  & \none[\diag] \\
\none & \none & v \\
\end{ytableau}
\end{equation}
namely, with $v$ and $w$ on the same content line but in different shapes, with $w$ in (or adjacent to) a row of a later shape, and $u$ on a content line one smaller, in the same row as $w$.
\item[iii] If $u, w$ are in row $r$ of $\beta^{(j)}/\gamma^{(j)}$, then $u$ and $w$ must be between the cells $(r, \gamma^{(j)}_r-1)$ and $(r, \beta^{(j)}_r+1)$, inclusively.
\end{enumerate}
It is important to note that while $v$ must be a cell in $\bg$, we allow the cells $u$ and $w$ to not be in any of the skew shapes. We allow $u$ to possibly be one cell to the left of the start of a row in the skew diagram, and we allow $w$ to possibly be one cell to the right of the end of a row in the skew diagram. If there is a row of length zero then both $u$ and $w$ can lie outside the diagram.

\begin{definition}
Let $\bg$ be a tuple of skew partitions and let $T \in \SSYT(\bg)$. Let $a, b, c$ be the entries in the cells of a triple $(u, v, w)$ respectively, where we set $a = 0$ and $c=\infty$ if the respective cell is not in $\bg$. Given the triple of entries
\ytableausetup{nobaseline}
\[
\begin{ytableau}
\none & \none & \none &\none & a & c  \\
\none & \none & \none & \none & \none[\diag]  \\
\none & \none & \none  & \none[\diag] \\
\none & \none & b \\
\end{ytableau}
\]
we say this is a {\em coinversion triple} of $T$ if $a \leq b \leq c$.
\label{inv-coinv-triple}
\end{definition}

\item We will only consider the case when $X$ is a finite alphabet $X_n = \{x_1,\ldots, x_n\}$. For a filling $T$ of $\bg$ we use the notation $x^T$ to mean $\prod_{i=1}^n x_i^{t_i}$ where $t_i$ is the number of times $i$ appears in $T$.

\begin{definition} \label{coinv-LLT}
Let $\bg$ be a tuple of skew partitions. The coinversion LLT polynomial is the generating function
\[ \mathcal{L}_{\bg}(X; t) = \sum_{T \in \SSYT(\bg)} t^{\coinv(T)} x^T \]
where $\coinv(T)$ is the number of coinversion triples of $T$.
\end{definition}
\noindent The coinversion LLT polynomials are related to the inversion LLT polynomials of \cite{HHLRU} in a simple way
\begin{equation} 
\mathcal{L}_{\bg}(X; t) = t^m \mathcal{G}_{\bg}(X; t^{-1}) \label{inv-coinv} \end{equation}
where $\mathcal{G}$ is the inversion LLT polynomial and  $m$ is the total number of triples in $\bg$.

\subsection{LLT vertex model}
In \cite{ABW,CGKM} it was shown that there is a bijection between tuples of SSYT and a certain vertex model consisting of several colors of lattice paths. We review the construction from \cite{CGKM} here, and refer the readers to the original papers for details. 

Consider a lattice model consisting of up-right lattice paths of $k$ different colors where paths of the same color are not allowed to intersect. At each face of our lattice we assign a label in $\{0,1\}^k$ to the sides of the face as follows:
\[ 
\resizebox{2cm}{!}{
\begin{tikzpicture}[baseline=(current bounding box.center)]
\draw (0,0) -- (0,1); \draw (1,0) -- (1,1); 
\draw (0,0) -- (1,0); \draw (0,1) -- (1,1); 
\node[below] at (0.5,0) {$\I$}; \node[above] at (0.5,1) {$\K$};
\node[left] at (0,0.5) {$\J$}; \node[right] at (1,0.5) {$\L$}; 
\node at (0.5,0.5) {$x$};
\end{tikzpicture}}, 
\hspace{1cm} \I,\J,\K,\L \in \{0,1\}^k
\]
\noindent where the $x$ indicates a parameter that will be used in defining the weight of the face and $ \I,\J,\K,\L$ express which paths are crossing the respective sides of the face. One should interpret, for example, a 1 in the $i^{th}$ component of $\I$ as indicating that a path of color $i$ crosses the bottom boundary of the face. The weights of the face are given by
\begin{equation} \label{FaceWeight}
\begin{aligned}
L_x(\I,\J;\K,\L) = & x^{\substack{\text{\# colors exiting the} \\ \text{face to the right}}} \prod_{\substack{\text{colors $i$ exiting the} \\ \text{face to the right}}} t^{\substack{\text{\# colors larger than $i$ that} \\ \text{appear in the face}}}
\end{aligned}
\end{equation}
\noindent whenever $\I+\J = \K+\L$ and there is no $i \in \{1,2,\ldots,k\}$ such that $I_i = J_i = 1$, and otherwise we set $L_x(\I,\J;\K,\L) = 0$.  The condition $\I+\J = \K+\L$ ensures that any path that enters a face from the bottom or left must exit the face from the top or right, while the condition that there is no $i \in \{1,2,\ldots,k\}$ such that $I_i = J_i = 1$ ensures that paths of a given color are non-intersecting.

Let us introduce some notations that will be used to define the boundary condition of our vertex model. Given a tuple of partitions $\bm{\mu} = (\mu^{(1)},\ldots,\mu^{(k)})$ and an integer $i$, let $\mu(i) \in \{0,1\}^k$ be the vector whose $j$-th component, for each index $j \in \{1,\ldots,k\}$, is 1 if and only if, for some $m \in \{1,\ldots,\ell(\mu^{(j)})\}$, the length of the $m^{th}$ row of $\mu^{(j)}$ is $i+m-1$.  Let $\bg = (\beta^{(1)}/\gamma^{(1)},\ldots,\beta^{(k)}/\gamma^{(k)})$ be a tuple of skew partitions. Let
\begin{align*}
    &r = r(\bg) = \min \{ i \in \Z : \gamma(i) \neq \0\}, \\
    &s = s(\bg) = \max \{ i \in \Z : \beta(i) \neq \0\}.
\end{align*}
Note that $s-r+1$ gives the number of columns necessary in the vertex model for $\bg$, $\beta(r+i)$ gives the top boundary condition for the $i^{th}$ column from the left, and $\gamma(r+i)$ gives the bottom boundary condition for the $i^{th}$ column from the left. 
With this notation, we introduce the lattice that will be of particular interest to us:
\begin{equation} L_{\bg} := \speciallattice  \label{speciallattice} \end{equation}

We let $\mathcal{Z}_{\bg}(X_n; t)$ denote the partition function of $L_{\bg}$, that is 
\[ \mathcal{Z}_{\bg}(X_n; t) = \sum_{L \in LC_{\bg}} \text{weight}(L)  \]
where $LC_{\bg}$ denotes the set of path configurations on $L_{\bg}$  satisfying the boundary condition and weight$(L)$ is the product of the weight of each face in $L$. 

\begin{thm} \label{LatticeModel}
Let $\bg$ be a tuple of skew partitions. Then,
\[ \mathcal{Z}_{\bg}(X_n; t) = \mathcal{L}_\bg(X_n;t). \]
\end{thm}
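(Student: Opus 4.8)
\emph{Strategy.} The plan is to prove Theorem~\ref{LatticeModel} by constructing an explicit bijection
\[ \Phi : LC_{\bg} \longrightarrow \SSYT(\bg) \]
and checking that it intertwines the two gradings carried by each side: the monomial weight $x^T$ and the statistic $\coinv(T)$. This is the content of \cite{ABW,CGKM}, and I sketch the argument. The starting point is the bead (Maya-diagram) encoding already built into the boundary data: by definition the $j$-th coordinate of $\mu(i)$ records whether content line $i$ carries a bead of $\mu^{(j)}$, i.e.\ whether $i = \mu^{(j)}_m - m + 1$ for some $m$. In $L_{\bg}$ the column sitting on content line $i$ has bottom boundary $\gamma(i)$ and top boundary $\beta(i)$, so the beads of $\gamma^{(j)}$ are the sources and the beads of $\beta^{(j)}$ the sinks of the color-$j$ paths.

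\emph{The bijection.} Since $\gamma^{(j)} \subseteq \beta^{(j)}$, the bead configuration of $\beta^{(j)}$ is obtained from that of $\gamma^{(j)}$ by sliding beads rightward, the $p$-th bead moving from content $\gamma^{(j)}_p - p + 1$ to content $\beta^{(j)}_p - p + 1$, a displacement $\beta^{(j)}_p - \gamma^{(j)}_p$ equal to the length of row $p$ of $\beta^{(j)}/\gamma^{(j)}$. A color-$j$ path realizing this motion makes exactly one horizontal step per cell of that row, and I would declare the height $v \in \{1,\ldots,n\}$ of the horizontal step leaving content line $c$ to be the entry of the cell of row $p$ with content $c$; collecting these fillings over all $j$ defines $T = \Phi(L)$. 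One then checks that the admissibility rules of the model are precisely the tableau conditions: the up-right motion of a single path forces the entries along each row to be weakly increasing; the prohibition of an index $i$ with $I_i = J_i = 1$ (no two equal-color paths meeting) forces the entries up each column to be strictly increasing; and the conservation law $\I + \J = \K + \L$ together with the prescribed boundary guarantees every source is matched to a sink, so that $\Phi$ is invertible, with $\Phi^{-1}$ reading the cell entries back off as the heights of horizontal steps.

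\emph{The $x$-weight.} By (\ref{FaceWeight}) a face contributes a factor of its parameter $x$ for each color exiting it to the right, that is, once per horizontal step, and for a face in row $v$ this parameter is $x_v$. Hence the monomial part of $\text{weight}(L)$ is $\prod_v x_v^{m_v}$, where $m_v$ is the number of horizontal steps in row $v$; as this equals the number of cells of $\Phi(L)$ with entry $v$, it is exactly $x^{\Phi(L)}$.

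\emph{The $t$-weight, the crux.} It remains to show $\prod_{\text{faces}} t^{(\cdots)} = t^{\coinv(\Phi(L))}$, which I would do by a local analysis of (\ref{FaceWeight}). Each factor of $t$ is attached to one horizontal step, of some color $i$, together with one strictly larger color present at the same vertex. I would match such a pair to a coinversion triple in which the cell filled by the color-$i$ step is the middle cell $v$ of (\ref{triple}), the larger color supplies the cell $w$ lying on the same content line in a later shape, and $u$ is the cell one content line to the left in the row of $w$. The precise statement that a larger color ``appears in the vertex'' while color $i$ exits to the right then translates into the inequality $a \le b \le c$ of Definition~\ref{inv-coinv-triple}, with the conventions $a = 0$ and $c = \infty$ accounting for the cases in which $u$ or $w$ is not an actual cell of $\bg$ --- exactly the degenerate triples permitted by condition (iii) on the bounding cells. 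Summing over all faces then reassembles $\coinv(\Phi(L))$ cell by cell, and combining this with the $x$-weight computation gives $\text{weight}(L) = t^{\coinv(\Phi(L))} x^{\Phi(L)}$; summing over $LC_{\bg}$ yields the theorem. I expect this last matching to be the main obstacle: one must verify that the purely local count of ``larger colors present at a vertex'' is in bijection with triples, so that each coinversion triple is counted once and only once, and that the boundary/degenerate triples (where $u$ ends a row of $\bm{\gamma}$, or $w$ sits just past the end of a row of $\bm{\beta}$) are correctly handled by the $0/\infty$ conventions.
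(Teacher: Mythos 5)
Your proposal follows essentially the same route as the paper: the paper does not prove Theorem~\ref{LatticeModel} itself but defers to \cite{CGKM}, and the construction it reviews --- each row of the $i$-th shape becoming a color-$i$ path whose horizontal-step heights are the cell entries, with the $x$-weight counting horizontal steps and each coinversion triple matched to a face where a larger color appears alongside a color exiting to the right (exactly the green cells/face in the paper's example) --- is precisely the bijection and weight analysis you sketch. Your identification of the triple $(u,v,w)$ with the local vertex data, including the $0/\infty$ conventions for the virtual cells $u$ and $w$, is the correct crux, and the remaining verification you flag is the content of the cited reference.
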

\noindent We refer the reader to \cite{CGKM} for a proof.

\begin{example}\label{ex:vertex}
As an example of the above constructions, consider $\bg = ((2,2)/(1,0), (1))$ with $n=2$. We have
\[
\begin{tabular}{ccc}
	\resizebox{4cm}{!}{
	\ytableausetup{nosmalltableaux}
	\ytableausetup{nobaseline}
	\ytableausetup{aligntableaux=center}
	\begin{ytableau}
	\none & \none & \none & \none & \none[-1] & \none[0] & \none[1] & \none[2] \\	
	\none & \none & \none & \none[\diag] &\none[\diag] &\none[\diag] &\none[\diag] &\none \\
	\none & \none & \none[\diag] &*(green) 1  &\none[\diag]  &\none[\diag] & \none \\
	\none & \none[\diag] & \none[\diag] &\none[\diag] &\none[\diag] &\none & \none \\
	 1 &2 &\none[\diag] &\none[\diag] &\none & \none & \none \\
	*(lightgray)  & *(green) 1 &\none[\diag] &\none & \none & \none & \none
	\end{ytableau}} &  &
	\resizebox{4cm}{!}{
	\begin{tikzpicture}[baseline=(current bounding box.center)]
	\draw[help lines] (0,0) grid (4,2);
	\draw[blue] (0.4,0)--(0.4,0.6)--(1.4,0.6)--(1.4,1.6)--(2.4,1.6)--(2.4,2);
	\draw[blue] (2.4,0)--(2.4,0.6)--(3.4,0.6)--(3.4,2);
	\draw[red] (1.6,0)--(1.6,0.4)--(2.6,0.4)--(2.6,2);
	\draw[green, thick] (2,0) rectangle (3,1);
	\node[above] at (0.5,2) {$-1$}; \node[above] at (1.5,2) {$0$}; \node[above] at (2.5,2) {$1$}; \node[above] at (3.5,2) {$2$};
	\end{tikzpicture}}
\end{tabular}
\]
where the left is a possible semi-standard filling of the tuple of partitions and the right is the corresponding lattice paths. Note that there is a simple bijection between fillings of the sequence of tableaux and lattice paths in which each row of tableaux $i$ is a path of color $i$ and the entries in the row correspond to the height of the horizontal steps of the path. The columns of the vertex model correspond to the content lines in the tableaux formulation (see the labeling in the above example). The green cells in the tableaux correspond to the coinversion triple 
\ytableausetup{nobaseline}
\[
\begin{ytableau}
\none & \none & \none &\none & 1 & \infty  \\
\none & \none & \none & \none & \none[\diag]  \\
\none & \none & \none  & \none[\diag] \\
\none & \none & 1 \\
\end{ytableau}
\]
where the cell not contained in the diagrams is given filling infinity. This triple corresponds to the face highlighted in green in the vertex model.
\end{example}

We now make some definitions and conventions that will be useful for us later on. We will only consider tuples with two partitions, $\bg = (\beta^{(1)}/\gamma^{(1)},\beta^{(2)}/\gamma^{(2)})$. In the vertex model formulation, we will always draw the path corresponding to the first partition in blue, and those corresponding to the second in red.

We say that a path incident to a boundary of the lattice is a {\em singleton} if no paths of any other colors are also incident to the same boundary at the same face. In Example \ref{ex:vertex}, the rightmost blue path on the top boundary is a singleton, and all three paths on the bottom boundary are singletons.

\section{Partition Swapping Algorithm}\label{sect:procedure}
In this section we will construct an algorithm which defines a bijection between path configurations with boundary condition given by $\bg = (\beta^{(1)}/\gamma^{(1)},\beta^{(2)}/\gamma^{(2)})$ to path configurations with boundary condition $(\bg)_{swap} = (\beta^{(2)}/\gamma^{(2)},\beta^{(1)}/\gamma^{(1)})$. To do this, we define a procedure which starts with a configuration with boundary condition $\bg$, selects certain path segments in this configuration, then swaps the colors of the selected path segments, resulting in a configuration with boundary condition $(\bg)_{swap}$.

Suppose we are given a path configuration and we select some point along one of the paths and start following that path's trajectory. We say that we are traveling {\em forward} if while following a path we are traveling upward or to the right, otherwise we say that we are traveling {\em backward}. 

Our basic procedure is as follows:
\begin{enumerate}
\item Choose a singleton red path on the top boundary or a singleton blue path on the bottom boundary.  
\item Follow a segment of the red (resp. blue) path traveling backward (resp. forward) until we hit a face containing a path of the other color. 
\item Switch to following this new segment of path according to the following rules:
\begin{equation}\label{eq:rules}
\begin{tabular}{cccc}
\begin{tikzpicture}[baseline=(current bounding box.center)] 
\draw (0,0)--(1,0)--(1,1)--(0,1)--(0,0);
\draw[red] (0.6,0.4)--(0.6,1);
\draw[blue] (0.4,0.6)--(1,0.6);
\node[left] at (0.6,0.8) {$\downarrow$};
\node[below] at (0.8,0.6) {$\rightarrow$};
\end{tikzpicture}
& 
\begin{tikzpicture}[baseline=(current bounding box.center)] 
\draw (0,0)--(1,0)--(1,1)--(0,1)--(0,0);
\draw[blue] (0.4,0.6)--(0.4,1);
\draw[red] (0.6,0.4)--(1,0.4);
\node[left] at (0.4,0.8) {$\uparrow$};
\node[below] at (0.8,0.4) {$\leftarrow$};
\end{tikzpicture}
&
\begin{tikzpicture}[baseline=(current bounding box.center)] 
\draw (0,0)--(1,0)--(1,1)--(0,1)--(0,0);
\draw[blue] (0,0.6)--(0.4,0.6);
\draw[red] (0.6,0)--(0.6,0.4);
\node[above] at (0.2,0.6) {$\rightarrow$};
\node[right] at (0.6,0.2) {$\downarrow$};
\end{tikzpicture}
&
\begin{tikzpicture}[baseline=(current bounding box.center)] 
\draw (0,0)--(1,0)--(1,1)--(0,1)--(0,0);
\draw[red] (0,0.4)--(0.6,0.4);
\draw[blue] (0.4,0)--(0.4,0.6);
\node[above] at (0.2,0.4) {$\leftarrow$};
\node[right] at (0.4,0.2) {$\uparrow$};
\end{tikzpicture}
\end{tabular}
\end{equation}
that is, for example, if we enter the face following a red path segment from the top we exit the face following a blue path segment traveling right.  
\item We then repeat this process on the new segment of path. Continue repeating until the path segment we are following ends on the boundary.
\end{enumerate}
We will show in Lemma \ref{lem:end} that this is well-defined and the procedure ends on the boundary after a finite number of steps. For example, consider a configuration with boundary condition given by $\bg= ((8,7,6),(4,3,2)/(2,0,0))$. Using this procedure we have
\begin{equation}\label{ex:onewalk}
\resizebox{0.4\textwidth}{!}{ $
\begin{tikzpicture}[baseline=(current bounding box.center)] 
\draw[help lines] (0,0) grid (11,7);

\draw[red] (0.6,0)--(0.6,4.4)--(2.6,4.4)--(2.6,7);
\draw[red] (1.6,0)--(1.6,3.4)--(3.6,3.4)--(3.6,5.4)--(4.6,5.4)--(4.6,7);
\draw[red] (4.6,0)--(4.6,2.4)--(4.6,2.4)--(4.6,4.4)--(5.6,4.4)--(5.6,6.4)--(6.6,6.4)--(6.6,7);

\draw[blue] (0.4,0)--(0.4,2.6)--(2.4,2.6)--(2.4,4.6)--(4.4,4.6)--(4.4,5.6)--(6.4,5.6)--(6.4,7);
\draw[blue] (1.4,0)--(1.4,1.6)--(5.4,1.6)--(5.4,4.6)--(8.4,4.6)--(8.4,7);
\draw[blue] (2.4,0)--(2.4,0.6)--(10.4,0.6)--(10.4,7);

\draw[red] (2.6,7)--(2.6,4.6);
\draw[blue] (2.6,4.6)--(3.6,4.6);
\draw[red] (3.6,4.6)--(3.6,3.4)--(2.4,3.4);
\draw[blue] (2.4,3.4)--(2.4,4.4);
\draw[red] (2.4,4.4)--(0.6,4.4)--(0.6,2.6);
\draw[blue] (0.6,2.6)--(1.6,2.6);
\draw[red] (1.6,2.6)--(1.6,1.6);
\draw[blue] (1.6,1.6)--(4.6,1.6);
\draw[red] (4.6,1.6)--(4.6,0.6);
\draw[blue] (4.6,0.6)--(10.4,0.6)--(10.4,7);
\end{tikzpicture}
$}
\rightarrow
\resizebox{0.4\textwidth}{!}{ $
\begin{tikzpicture}[baseline=(current bounding box.center)] 
\draw[help lines] (0,0) grid (11,7);

\draw[red] (0.6,0)--(0.6,4.4)--(2.6,4.4)--(2.6,7);
\draw[red] (1.6,0)--(1.6,3.4)--(3.6,3.4)--(3.6,5.4)--(4.6,5.4)--(4.6,7);
\draw[red] (4.6,0)--(4.6,2.4)--(4.6,2.4)--(4.6,4.4)--(5.6,4.4)--(5.6,6.4)--(6.6,6.4)--(6.6,7);

\draw[blue] (0.4,0)--(0.4,2.6)--(2.4,2.6)--(2.4,4.6)--(4.4,4.6)--(4.4,5.6)--(6.4,5.6)--(6.4,7);
\draw[blue] (1.4,0)--(1.4,1.6)--(5.4,1.6)--(5.4,4.6)--(8.4,4.6)--(8.4,7);
\draw[blue] (2.4,0)--(2.4,0.6)--(10.4,0.6)--(10.4,7);

\draw[red, ultra thick, ->] (2.6,7)--(2.6,4.6);
\draw[blue, ultra thick, ->] (2.6,4.6)--(3.6,4.6);
\draw[red, ultra thick, ->] (3.6,4.6)--(3.6,3.4)--(2.4,3.4);
\draw[blue, ultra thick, ->] (2.4,3.4)--(2.4,4.4);
\draw[red, ultra thick, ->] (2.4,4.4)--(0.6,4.4)--(0.6,2.6);
\draw[blue, ultra thick, ->] (0.6,2.6)--(1.6,2.6);
\draw[red, ultra thick, ->] (1.6,2.6)--(1.6,1.6);
\draw[blue, ultra thick, ->] (1.6,1.6)--(4.6,1.6);
\draw[red, ultra thick, ->] (4.6,1.6)--(4.6,0.6);
\draw[blue, ultra thick, ->] (4.6,0.6)--(10.4,0.6)--(10.4,7);
\end{tikzpicture}
$}
\end{equation}
where the path segments we follow after starting at the top-left red path are given in bold on the right. Note that the rules given in (\ref{eq:rules}) imply that we alternate traveling backward and forward and that we also alternate colors. One can check that this implies the rules presented in (\ref{eq:rules}) are sufficient to describe all situations that can arise using this procedure.

We call the sequence of path segments that are traversed when running this algorithm starting from a singleton path on the boundary the {\em{walk}} starting from that boundary path. These walks satisfy some straightforward properties.
\begin{lem}\label{lem:end}
The walk cannot enter a loop. In particular, the walk must terminate at either the top or bottom boundaries.
\end{lem}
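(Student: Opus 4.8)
The plan is to recast the walk as the forward orbit of a deterministic step map and to show that this map is injective, so that---since the starting arc has no preimage---the orbit can never repeat a state and hence cannot close into a loop. Concretely, I would take a \emph{state} to be a directed monochromatic arc: one of the maximal same-color path segments traversed between two consecutive color switches, together with the direction (forward or backward) in which the walk traverses it. The rules (\ref{eq:rules}) then define a partial map $f$ sending each arc to the next one, and, as already noted after (\ref{eq:rules}), they force the color and the forward/backward flag to alternate; in particular every backward arc is red and every forward arc is blue for a walk begun at the top boundary (and symmetrically for one begun at the bottom), and the four rules exhaust every interior situation. Thus $f$ is defined at every arc that ends at an interior switch face, and is undefined exactly when the arc runs into the top or bottom boundary.

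First I would check that $f$ is injective, i.e.\ that the four local rules are reversible. Reading (\ref{eq:rules}) backwards, a blue arc leaving a switch to the right can only have come from a red arc entering from the top, a blue arc leaving upward only from a red arc entering from the right, and symmetrically for the two red cases; so the color and direction of any arc determine uniquely the color and direction of its predecessor. It then remains to see that the \emph{strand} is determined as well: at a switch face the non-intersection of like-colored paths forces at most one red and at most one blue strand to be present, so the predecessor arc must lie on the unique opposite-color strand through that face, travelling in the forced direction. Hence distinct arcs have distinct images and $f$ is injective wherever it is defined.

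Next I would show that the starting arc has no $f$-preimage. The walk begins at a \emph{singleton} on the boundary---say a red path meeting the top boundary at a face containing no blue path---traversed backward, i.e.\ downward. A preimage would be a blue arc whose switch produced this downward red arc, which by the previous paragraph would require a blue strand sharing that top face; the singleton hypothesis rules this out. With $f$ injective and the initial arc lacking a preimage, a first repeated arc becomes impossible: a repeat in the interior of the orbit would give two distinct arcs with the same $f$-image, while a repeat of the initial arc would supply it a preimage. Therefore the orbit is a non-repeating sequence of arcs, which is exactly the statement that the walk cannot enter a loop. Finally, since there are only finitely many arcs, the orbit must reach an arc on which $f$ is undefined; as $f$ is defined at every interior switch face and the left and right boundaries of $L_{\bg}$ carry no paths (their labels are $\0$), such an arc can only run into the top or bottom boundary, giving termination there.

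The step I expect to be the crux is the injectivity of $f$ combined with the correct bookkeeping at the singleton start: one must verify that each of the four rules has a unique inverse and that the opposite-color strand at a switch face is unique, and then invoke the singleton hypothesis in precisely the right place to deny the initial arc a predecessor. A monotone-coordinate approach (tracking, say, row $+$ column, which strictly increases along blue arcs and strictly decreases along red arcs) is tempting but cannot by itself exclude loops, since that quantity is a state function whose net change around any cycle vanishes; this is why I would argue via reversibility rather than via a monovariant.
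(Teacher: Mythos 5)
Your proposal is correct and follows essentially the same route as the paper's proof: both arguments rest on the reversibility of the local rules in (\ref{eq:rules}) (knowing how the walk exits a face determines how it entered), which forces every segment of a putative loop to have a predecessor in the loop, contradicting the fact that the walk begins at a boundary singleton. Your write-up merely makes explicit the injectivity of the step map, the role of the singleton hypothesis in denying the initial arc a preimage, and the finiteness argument for termination, all of which the paper leaves implicit.
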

\begin{proof}
Note that from the rules in (\ref{eq:rules}), if we are at a face where our walk changes color and we know how we exit the face, then we know along which path segment we must have entered. If we are on a path segment in a loop then the previous path segment in the walk must also be in the loop, as it is the only way to get to the current path segment.  Continuing this, we see that every previous path segment must be in the loop.  But this contradicts that the walk starts on the boundary.
\end{proof}
\begin{lem}
If we begin the walk at a red path on the top boundary, it will terminate following either a red path segment on the bottom boundary or a blue path segment on the top boundary. Similarly, if we begin at a blue path on the bottom boundary, the walk will terminate following either a red path segment on the bottom boundary or a blue path segment on the top boundary.
\end{lem}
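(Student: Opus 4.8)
The plan is to isolate a single invariant of the walk---a fixed correspondence between a segment's color and its direction of travel---and then read off the terminal boundary from the orientation of the paths.

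First I would establish the invariant: \emph{every red segment in the walk is traversed backward, and every blue segment is traversed forward}. This is immediate from the two starting conventions together with the remark following (\ref{eq:rules}). A walk begun at a singleton red path on the top boundary starts on a red segment traveling backward, and a walk begun at a singleton blue path on the bottom boundary starts on a blue segment traveling forward, so in both cases the first segment satisfies the invariant. Since the rules (\ref{eq:rules}) force the walk to alternate both its color (red $\leftrightarrow$ blue) and its travel direction (backward $\leftrightarrow$ forward) at each switching face, a trivial induction propagates the invariant to every subsequent segment.

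Next I would use the geometry of $L_{\bg}$. All lattice paths are up-right, and the left and right boundaries of $L_{\bg}$ carry the empty label $\0$, so no path crosses them; hence every path enters through the bottom boundary and exits through the top boundary. Consequently, following any red path backward (down and to the left) can only lead to its source on the bottom boundary, while following any blue path forward (up and to the right) can only lead to its sink on the top boundary.

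Finally I would combine these facts with Lemma \ref{lem:end}. That lemma guarantees the walk terminates at the top or bottom boundary, so it suffices to inspect the final segment. If it is red, the invariant says it is traversed backward, and by the previous paragraph the only boundary it can reach is the bottom; thus the walk ends at a red path on the bottom boundary. If it is blue, it is traversed forward and can only reach the top, so the walk ends at a blue path on the top boundary. Since the invariant, and hence this dichotomy, is identical for both starting conditions, both assertions of the lemma follow at once. The only real content is the color--direction invariant; once it is in place, ruling out the ``wrong'' boundary is forced by the up-right orientation of the paths and the empty side boundaries, so I do not anticipate a serious obstacle beyond stating the induction carefully.
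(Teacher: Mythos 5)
Your proof is correct and follows essentially the same route as the paper's: both arguments rest on the color--direction invariant (red segments traversed backward, blue segments forward, propagated by the alternation in (\ref{eq:rules})) combined with Lemma \ref{lem:end} and the up-right orientation of the paths. You deduce the terminal boundary from the color of the last segment while the paper deduces the color from the terminal boundary, but these are the same two ingredients read in opposite directions.
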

\begin{proof}
Suppose we begin at a red path on the top boundary traveling backward. At each step of this procedure, we alternate the direction of travel and color, so we will always travel forward on blue path segments and backward on red path segments. By Lemma \ref{lem:end}, the walk must end on the boundary. If it ends on the top boundary, we must be traveling forward, and thus be on a blue path segment. If it ends on the bottom boundary, we must be traveling backward along a red path segment. A similar argument works if we begin at a blue path on the bottom boundary.
\end{proof}

We will repeat this procedure starting from every singleton red path on the top boundary and every singleton blue path on the bottom boundary. In our example, we have
\[
\resizebox{0.6\textwidth}{!}{ $
\begin{tikzpicture}[baseline=(current bounding box.center)] 
\draw[help lines] (0,0) grid (11,7);

\draw[red] (0.6,0)--(0.6,4.4)--(2.6,4.4)--(2.6,7);
\draw[red] (1.6,0)--(1.6,3.4)--(3.6,3.4)--(3.6,5.4)--(4.6,5.4)--(4.6,7);
\draw[red] (4.6,0)--(4.6,2.4)--(4.6,2.4)--(4.6,4.4)--(5.6,4.4)--(5.6,6.4)--(6.6,6.4)--(6.6,7);

\draw[blue] (0.4,0)--(0.4,2.6)--(2.4,2.6)--(2.4,4.6)--(4.4,4.6)--(4.4,5.6)--(6.4,5.6)--(6.4,7);
\draw[blue] (1.4,0)--(1.4,1.6)--(5.4,1.6)--(5.4,4.6)--(8.4,4.6)--(8.4,7);
\draw[blue] (2.4,0)--(2.4,0.6)--(10.4,0.6)--(10.4,7);

\draw[red, ultra thick, ->] (2.6,7)--(2.6,4.6);
\draw[blue, ultra thick, ->] (2.6,4.6)--(3.6,4.6);
\draw[red, ultra thick, ->] (3.6,4.6)--(3.6,3.4)--(2.4,3.4);
\draw[blue, ultra thick, ->] (2.4,3.4)--(2.4,4.4);
\draw[red, ultra thick, ->] (2.4,4.4)--(0.6,4.4)--(0.6,2.6);
\draw[blue, ultra thick, ->] (0.6,2.6)--(1.6,2.6);
\draw[red, ultra thick, ->] (1.6,2.6)--(1.6,1.6);
\draw[blue, ultra thick, ->] (1.6,1.6)--(4.6,1.6);
\draw[red, ultra thick, ->] (4.6,1.6)--(4.6,0.6);
\draw[blue, ultra thick, ->] (4.6,0.6)--(10.4,0.6)--(10.4,7);

\draw[red, ultra thick, ->] (4.6,7)--(4.6,5.6);
\draw[blue, ultra thick, ->] (4.6,5.6)--(5.6,5.6);
\draw[red, ultra thick, ->] (5.6,5.6)--(5.6,4.6);
\draw[blue, ultra thick, ->] (5.6,4.6)--(8.4,4.6)--(8.4,7);

\draw[blue, ultra thick, ->] (2.4,0)--(2.4,0.6)--(4.6,0.6);
\draw[red, ultra thick, ->] (4.6,0.6)--(4.6,0);
\end{tikzpicture}
$}
\]
where we highlight all the traversed path segments. The following lemma is a direct consequence of the rules given in \eqref{eq:rules}.
\begin{lem}
If we start this procedure at two different points on the boundary, then the walks for each cannot cross (although they may touch at a corner).
\end{lem}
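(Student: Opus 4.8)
The plan is to reduce this non-crossing statement to a purely local analysis carried out one face at a time. In the drawing conventions fixed in the examples (e.g. (\ref{ex:onewalk})), every blue vertical segment sits at horizontal offset $0.4$ and every red vertical segment at offset $0.6$, while blue horizontal segments sit at vertical offset $0.6$ and red horizontal ones at offset $0.4$. Consequently any transversal intersection of two walks --- necessarily a meeting of a horizontal segment of one walk with a vertical segment of the other --- occurs at a point of the form $(\text{col}+a,\ \text{row}+b)$ with $a,b\in\{0.4,0.6\}$, i.e.\ strictly inside a single face. So it suffices to show that, inside each face, the walk-segments contributed by all walks are non-crossing and meet at most at a point. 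I would first record that each face carries at most one red and at most one blue segment: the face weight (\ref{FaceWeight}) is nonzero only when no color has $I_i=J_i=1$, so for each color the number entering (from the bottom or left) is at most one, hence at most one path of each color passes through the face.

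Next I would classify, using the rules (\ref{eq:rules}), the walk-transitions that can occur at a two-color face, reading them off in terms of the four sides (top, bottom, left, right). A red-to-blue switch either enters from the top and exits to the right (rule~1) or enters from the right and exits to the top (rule~2); either way it is a turn using only the two rays pointing up and to the right from the corner where the red and blue segments meet. A blue-to-red switch either enters from the left and exits downward (rule~3) or enters from the bottom and exits to the left (rule~4); either way it uses only the rays pointing left and down. Since there is a unique red segment and a unique blue segment in the face, each traversed in its fixed direction (red backward, blue forward), and since the reversibility observed in the proof of Lemma~\ref{lem:end} pins down the incoming segment from the outgoing one, at most one red-to-blue transition and at most one blue-to-red transition can occur at a given face; a one-color face carries a single through-segment and no switch. (In particular a walk never passes straight through a two-color face: upon reaching it, it must switch.)

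Finally, the two possible transitions at a face are an ``upper-right'' turn using the rays up and right and a ``lower-left'' turn using the rays left and down, both bending at the common crossing point of the red and blue segments. Around that point these two ray-pairs are adjacent and non-interleaving (cyclically up, right, down, left), so the two $L$-shaped pieces meet only at the crossing point and do not cross, while a single through-segment in a one-color face crosses nothing. Assembling the faces, two walks started at distinct boundary points are unions of such per-face pieces, so they never cross and their only possible contact is touching at such a crossing corner --- exactly the assertion of the lemma. The main work, and the only place demanding care, is the middle step: verifying directly from (\ref{eq:rules}) that every admissible arrangement of the red and blue segments in a face (straight or turning) yields transitions of precisely these two complementary ray-types, and tracking which sub-segment of a shared red or blue piece is used by which walk --- as in the configuration where one walk uses the red segment above the crossing point while a second walk uses the part below it.
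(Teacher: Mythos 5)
The paper offers no proof of this lemma — it is asserted to follow "from the rules given in \eqref{eq:rules}" — so your write-up is best read as a correct filling-in of exactly the local, face-by-face reasoning the paper gestures at: each face carries at most one path of each color by the admissibility condition on \eqref{FaceWeight}, the red-to-blue switches (rules 1 and 2) use only the rays toward the top and right edges of the face while the blue-to-red switches (rules 3 and 4) use only the rays toward the bottom and left edges, and these two $L$-shaped pieces cannot cross transversally, so distinct walks meet at most at a corner. One imprecision worth fixing: the two transitions available at a given face do not always bend at a \emph{common} crossing point. Rules 1 and 3 pivot at the intersection of the red vertical with the blue horizontal, while rules 2 and 4 pivot at the intersection of the blue vertical with the red horizontal; in the faces where both paths turn, the two admissible transitions (rule 1 with rule 4, or rule 2 with rule 3) are therefore based at two \emph{different} points, or the red and blue segments used do not intersect at all. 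The conclusion survives — in every one of the four two-color face types the upper-right piece and the lower-left piece are separated by the crossing point(s) or turn corners and meet at most at such a corner — but this is precisely the case check you defer to at the end, and it should be carried out explicitly rather than phrased as a single common pivot. With that case analysis spelled out, your argument is complete and is, in substance, the proof the paper leaves to the reader.
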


Finally, let $\Phi$ be the operator that takes a configuration with boundary condition given by $\bg$, enacts the basic procedure for every singleton red path on the top boundary and every singleton blue path on the bottom boundary, then swaps the color of all the traversed path segments.  In our example, this would give
\[
\resizebox{0.4\textwidth}{!}{
\begin{tikzpicture}[baseline=(current bounding box.center)] 
\draw[help lines] (0,0) grid (11,7);

\draw[red] (0.6,0)--(0.6,4.4)--(2.6,4.4)--(2.6,7);
\draw[red] (1.6,0)--(1.6,3.4)--(3.6,3.4)--(3.6,5.4)--(4.6,5.4)--(4.6,7);
\draw[red] (4.6,0)--(4.6,2.4)--(4.6,2.4)--(4.6,4.4)--(5.6,4.4)--(5.6,6.4)--(6.6,6.4)--(6.6,7);

\draw[blue] (0.4,0)--(0.4,2.6)--(2.4,2.6)--(2.4,4.6)--(4.4,4.6)--(4.4,5.6)--(6.4,5.6)--(6.4,7);
\draw[blue] (1.4,0)--(1.4,1.6)--(5.4,1.6)--(5.4,4.6)--(8.4,4.6)--(8.4,7);
\draw[blue] (2.4,0)--(2.4,0.6)--(10.4,0.6)--(10.4,7);

\draw[red, ultra thick, ->] (2.6,7)--(2.6,4.6);
\draw[blue, ultra thick, ->] (2.6,4.6)--(3.6,4.6);
\draw[red, ultra thick, ->] (3.6,4.6)--(3.6,3.4)--(2.4,3.4);
\draw[blue, ultra thick, ->] (2.4,3.4)--(2.4,4.4);
\draw[red, ultra thick, ->] (2.4,4.4)--(0.6,4.4)--(0.6,2.6);
\draw[blue, ultra thick, ->] (0.6,2.6)--(1.6,2.6);
\draw[red, ultra thick, ->] (1.6,2.6)--(1.6,1.6);
\draw[blue, ultra thick, ->] (1.6,1.6)--(4.6,1.6);
\draw[red, ultra thick, ->] (4.6,1.6)--(4.6,0.6);
\draw[blue, ultra thick, ->] (4.6,0.6)--(10.4,0.6)--(10.4,7);

\draw[red, ultra thick, ->] (4.6,7)--(4.6,5.6);
\draw[blue, ultra thick, ->] (4.6,5.6)--(5.6,5.6);
\draw[red, ultra thick, ->] (5.6,5.6)--(5.6,4.6);
\draw[blue, ultra thick, ->] (5.6,4.6)--(8.4,4.6)--(8.4,7);

\draw[blue, ultra thick, ->] (2.4,0)--(2.4,0.6)--(4.6,0.6);
\draw[red, ultra thick, ->] (4.6,0.6)--(4.6,0);
\end{tikzpicture}
}
\overset {\Phi} \mapsto
\resizebox{0.4\textwidth}{!}{
\begin{tikzpicture}[baseline=(current bounding box.center)] 
\draw[help lines] (0,0) grid (11,7);

\draw[red] (0.6,0)--(0.6,2.4)--(1.6,2.4)--(1.6,3.4)--(2.6,3.4)--(2.6,4.4)--(3.6,4.4)--(3.6,5.4)--(5.6,5.4)--(5.6,6.4)--(6.6,6.4)--(6.6,7);
\draw[red] (1.6,0)--(1.6,1.4)--(4.6,1.4)--(4.6,2.4)--(4.6,2.4)--(4.6,4.4)--(8.6,4.4)--(8.6,7);
\draw[red] (2.6,0)--(2.6,0.4)--(10.6,0.4)--(10.6,7);

\draw[blue] (0.4,0)--(0.4,4.6)--(2.4,4.6)--(2.4,7);
\draw[blue] (1.4,0)--(1.4,2.6)--(2.4,2.6)--(2.4,3.6)--(3.4,3.6)--(3.4,4.6)--(4.4,4.6)--(4.4,7);
\draw[blue] (4.4,0)--(4.4,1.6)--(5.4,1.6)--(5.4,5.6)--(6.4,5.6)--(6.4,7);

\end{tikzpicture}
}
\]
where we swap the color of all highlighted path segments in the left configuration to get the right configuration.

\begin{remark}
    When drawing the path configurations after swapping the color of the highlighted path segments, we also shift the effected path segments in order to straighten the paths within a face. For instance, in the first column of the above example, after swapping colors, the now blue path segment is shifted slightly to the left, while the now red path segment is shifted slightly down. Of course, these shifts do not affect the basic procedure and are only to maintain consistency in how the path configurations are drawn.
\end{remark}

\begin{prop}
$\Phi$ is a bijection between configurations with boundary condition given by $\bg$ and configurations with boundary condition given by $(\bg)_{swap}$.
\end{prop}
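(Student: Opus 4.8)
The plan is to show that $\Phi$ is a well-defined map into valid configurations with boundary condition $(\bg)_{swap}$, and then to exhibit an explicit two-sided inverse. The natural candidate for the inverse is the \emph{same} procedure, call it $\Psi$, applied to configurations with boundary $(\bg)_{swap}$: one runs the walks starting from singleton red paths on the top boundary and singleton blue paths on the bottom boundary of the swapped configuration, and again swaps the colors of all traversed segments. Establishing $\Psi \circ \Phi = \mathrm{id}$ — and, by the symmetric roles of the two partitions, $\Phi \circ \Psi = \mathrm{id}$ after interchanging $\bg$ and $(\bg)_{swap}$ — will prove the proposition.

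First I would verify that $\Phi$ is well-defined. By Lemma~\ref{lem:end} every walk terminates at the boundary, and by the preceding lemmas distinct walks cannot cross. I would then argue that no face-segment is traversed by two walks, nor twice by one walk: the rules \eqref{eq:rules} are locally reversible (as observed in the proof of Lemma~\ref{lem:end}, the exit of a color-changing face determines its entry), so two walks sharing a segment would have to coincide, while a single walk revisiting a segment would produce a loop, contradicting Lemma~\ref{lem:end}. Hence the traversed segments form an honest disjoint union of walks and the color swap is unambiguous.

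Next I would check that $\Phi(L)$ is a legitimate configuration with the correct boundary. At every face lying on a walk the local picture is one of the four configurations in \eqref{eq:rules} (or a straight or turning segment away from a color change), and swapping the two colors there again yields an admissible vertex satisfying $\I+\J=\K+\L$ and the non-intersection condition; faces off every walk are untouched. For the boundary I would observe that passing from $\bg$ to $(\bg)_{swap}$ is exactly the operation of exchanging the roles of the two colors, so the target boundary is obtained from that of $\bg$ by swapping blue and red at every position. It then remains to see that $\Phi$ recolors precisely the boundary segments needed to realize this: singleton red tops and singleton blue bottoms are recolored because they are walk starts, and the remaining recolorings occur exactly at the walk endpoints, which must land on singleton blue tops and singleton red bottoms.

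The heart of the argument — and the step I expect to be the main obstacle — is invertibility together with this boundary bookkeeping. I would show that a walk $W$ of $\Phi$ running from a start $p$ to an endpoint $q$ becomes, after the color swap, a legal walk of $\Psi$ in $\Phi(L)$ running from the recolored $q$ back to the recolored $p$ and traversing exactly the segments of $W$; this is precisely the local reversibility of \eqref{eq:rules}. Consequently $\Psi$ retraces each walk and undoes its color swap, giving $\Psi(\Phi(L)) = L$. The delicate point is to prove that the singleton red-top and singleton blue-bottom positions of $\Phi(L)$ — the starts of the $\Psi$-walks — are exactly the recolored endpoints of the $\Phi$-walks, with no omissions and no extra starts. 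This forces a careful analysis of the singleton-versus-doubleton structure of the boundary; in particular one must rule out a walk terminating at a boundary position already occupied by the other color, since such a termination would both create an inadmissible double occupancy and break the start/endpoint matching. Once this combinatorial accounting is in place, both composites are the identity and $\Phi$ is a bijection.
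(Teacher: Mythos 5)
Your proposal is correct and follows essentially the same route as the paper: check local validity of the recoloring at each face, show the boundary swaps correctly by showing every singleton boundary path is traversed, and obtain invertibility by running the (locally reversible) procedure again on the swapped configuration. The ``delicate point'' you flag — that walk endpoints account for exactly the remaining singleton recolorings — is settled in the paper by precisely the tool you already have: run the walk rules in reverse from any allegedly untraversed singleton; the reversed walk must end at a singleton that is a start of a forward walk, so the forward walk from there would have traversed it, a contradiction.
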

\begin{proof}
First, one can see that swapping the color of all the highlighted path segments results in a valid configuration of the vertex model by checking all possible changes that can happen at a single face while obeying the rules in \eqref{eq:rules}.

Next, we must show that the boundary condition changes appropriately. Note that the difference between the boundary condition given by $\bg$ and those given by $(\bg)_{swap}$ is only a swapping of the colors of the paths. We must check that under $\Phi$ all singleton blue boundary paths become red and vice versa (we are not concerned with points on the boundary with both colors, as they appear in both the $\bg$ and $(\bg)_{swap}$ boundary conditions). It suffices to show that all singleton boundary path segments are traversed in our procedure. Suppose that this was not the case and there is, say, a singleton red boundary path on the bottom boundary is not included in a walk. Then starting at that path and running the procedure in reverse (that is, reversing all the arrows in the set of rules (\ref{eq:rules})) we must end at either a singleton red boundary path on the top or a singleton blue boundary path on the bottom. But this path segment will have already been included in a walk as we use it as a starting point. 

Finally, note that running this procedure again returns us to our original configuration, so $\Phi$ is invertible. 
\end{proof}

\begin{remark}
This algorithm can be seen as a generalization of the procedure used in proving Proposition 5.5 of \cite{CGKM}. There, the authors show that if each of the partitions in the indexing tuple of $\mathcal{L}_{(\lambda^{(1)},\ldots,\lambda^{(k)})}(X_n;t)$ is a single row, then any rearrangement of the order of the partitions will result in an equivalent LLT polynomial.
\end{remark}

\section{Calculating the weight}\label{sect:weight}
While $\Phi$ is always a bijection, we are interested in the case when the LLT polyinomials $\mathcal{L}_{\bg}(X_n,t)$ and $\mathcal{L}_{(\bg)_{swap}}(X_n,t)$ are equivalent. That is, we wish to determine the possible $\bg$ such that $\Phi$ is weight-preserving up to an overall power of $t$.  

Toward this end, we associate a sequence of colored beads to the boundary condition of the paths given by $\bg$ as follows: We consider two rows of beads. Scanning the columns of our lattice model from left-to-right, for every singleton path along the top boundary we add a bead of the same color to the top row and for every singleton path along the bottom boundary we add a bead of the same color to the bottom row. We ensure that the beads keep the same ordering (from left to right) as the paths.  We label the beads by the total number of boundary paths to the right of their corresponding path.

We define a {\em matching} of this sequence of beads to be a set of arcs such that either the arc connects two beads of different colors in the same row or the arc connects two beads of the same color in different rows. A matching is {\em non-crossing} if the arcs (drawn so they always remain between the two rows) do not cross one another.

In our running example, the sequence of beads associated to the boundary condition is
\[
\resizebox{0.4\textwidth}{!}{
\begin{tikzpicture}[baseline=(current bounding box.center)] 
\draw[help lines] (0,3) grid (11,4);
\draw[help lines] (0,0) grid (11,1);
\draw[help lines] (0,1)--(0,3); \draw[help lines] (11,1)--(11,3);
\draw[blue, very thick] (6.4,3.5)--(6.4,4); \draw[blue, very thick] (8.4,3.5)--(8.4,4); \draw[blue, very thick] (10.4,3.5)--(10.4,4); 
\draw[red, very thick] (2.6,3.5)--(2.6,4); \draw[red, very thick] (4.6,3.5)--(4.6,4);  \draw[red, very thick] (6.6,3.5)--(6.6,4); 
\draw[blue, very thick] (0.4,0)--(0.4,0.5); \draw[blue, very thick] (1.4,0)--(1.4,0.5); \draw[blue, very thick] (2.4,0)--(2.4,0.5);
\draw[red, very thick] (0.6,0)--(0.6,0.5); \draw[red, very thick] (1.6,0)--(1.6,0.5); \draw[red, very thick] (4.6,0)--(4.6,0.5);
\end{tikzpicture}
}
\mapsto
\begin{tikzpicture}[baseline=(current bounding box.center)] 
\draw[dashed] (-0.2,2)--(3.2,2); \draw[dashed] (-0.2,0)--(3.2,0);
\draw[red, fill=red] (0,2) circle (3pt);\draw[red, fill=red] (1,2) circle (3pt); \draw[blue, fill=blue] (2,2) circle (3pt); \draw[blue, fill=blue] (3,2) circle (3pt);
\draw[blue, fill=blue] (1,0) circle (3pt); \draw[red, fill=red] (2,0) circle (3pt);
\node[above] at (0,2) {5}; \node[above] at (1,2) {4}; \node[above] at (2,2) {1}; \node[above] at (3,2) {0};
\node[below] at (1,0) {1}; \node[below] at (2,0) {0};
\end{tikzpicture}
\]

Through the procedure in Section \ref{sect:procedure}, we can associate a non-crossing matching to every configuration of the vertex model. Each walk created by starting the procedure at a singleton path corresponds to an arc in the matching.  The non-crossing matching associated to our example configuration is 
\[
\resizebox{0.4\textwidth}{!}{
\begin{tikzpicture}[baseline=(current bounding box.center)] 
\draw[help lines] (0,0) grid (11,7);

\draw[red] (0.6,0)--(0.6,4.4)--(2.6,4.4)--(2.6,7);
\draw[red] (1.6,0)--(1.6,3.4)--(3.6,3.4)--(3.6,5.4)--(4.6,5.4)--(4.6,7);
\draw[red] (4.6,0)--(4.6,2.4)--(4.6,2.4)--(4.6,4.4)--(5.6,4.4)--(5.6,6.4)--(6.6,6.4)--(6.6,7);

\draw[blue] (0.4,0)--(0.4,2.6)--(2.4,2.6)--(2.4,4.6)--(4.4,4.6)--(4.4,5.6)--(6.4,5.6)--(6.4,7);
\draw[blue] (1.4,0)--(1.4,1.6)--(5.4,1.6)--(5.4,4.6)--(8.4,4.6)--(8.4,7);
\draw[blue] (2.4,0)--(2.4,0.6)--(10.4,0.6)--(10.4,7);

\draw[red, ultra thick, ->] (2.6,7)--(2.6,4.6);
\draw[blue, ultra thick, ->] (2.6,4.6)--(3.6,4.6);
\draw[red, ultra thick, ->] (3.6,4.6)--(3.6,3.4)--(2.4,3.4);
\draw[blue, ultra thick, ->] (2.4,3.4)--(2.4,4.4);
\draw[red, ultra thick, ->] (2.4,4.4)--(0.6,4.4)--(0.6,2.6);
\draw[blue, ultra thick, ->] (0.6,2.6)--(1.6,2.6);
\draw[red, ultra thick, ->] (1.6,2.6)--(1.6,1.6);
\draw[blue, ultra thick, ->] (1.6,1.6)--(4.6,1.6);
\draw[red, ultra thick, ->] (4.6,1.6)--(4.6,0.6);
\draw[blue, ultra thick, ->] (4.6,0.6)--(10.4,0.6)--(10.4,7);

\draw[red, ultra thick, ->] (4.6,7)--(4.6,5.6);
\draw[blue, ultra thick, ->] (4.6,5.6)--(5.6,5.6);
\draw[red, ultra thick, ->] (5.6,5.6)--(5.6,4.6);
\draw[blue, ultra thick, ->] (5.6,4.6)--(8.4,4.6)--(8.4,7);

\draw[blue, ultra thick, ->] (2.4,0)--(2.4,0.6)--(4.6,0.6);
\draw[red, ultra thick, ->] (4.6,0.6)--(4.6,0);
\end{tikzpicture}
}
\mapsto
\begin{tikzpicture}[baseline=(current bounding box.center)] 
\draw[dashed] (-0.2,2.2)--(3.2,2.2); \draw[dashed] (-0.2,0)--(3.2,0);
\draw[red, fill=red] (0,2.2) circle (3pt);\draw[red, fill=red] (1,2.2) circle (3pt); \draw[blue, fill=blue] (2,2.2) circle (3pt); \draw[blue, fill=blue] (3,2.2) circle (3pt);
\draw[blue, fill=blue] (1,0) circle (3pt); \draw[red, fill=red] (2,0) circle (3pt);
\node[above] at (0,2.2) {5}; \node[above] at (1,2.2) {4}; \node[above] at (2,2.2) {1}; \node[above] at (3,2.2) {0}; 
\node[below] at (1,0) {1}; \node[below] at (2,0) {0};
\draw[thick] (0,2.2) arc(180:360:1.5); \draw[thick] (1,2.2) arc(180:360:0.5);
\draw[thick] (2,0) arc(0:180:0.5);
\end{tikzpicture}
\]

This mapping from vertex model configurations to non-crossing matchings is useful since, as we will see, the change in the power of $t$ under $\Phi$ depends only on the matching. As a preliminary step, we first prove several lemmas.
\begin{lem}\label{lem:dircount}
For a walk starting and ending on the same boundary we have
\[
\#\left(\begin{tikzpicture}[baseline=(current bounding box.center)] 
\draw (0,0)--(1,0)--(1,1)--(0,1)--(0,0);
\draw[red] (0.6,0.4)--(0.6,1);
\draw[blue] (0.4,0.6)--(1,0.6);
\node[left] at (0.6,0.8) {$\downarrow$};
\node[below] at (0.8,0.6) {$\rightarrow$};
\end{tikzpicture}\right) 
+
\#\left( \begin{tikzpicture}[baseline=(current bounding box.center)] 
\draw (0,0)--(1,0)--(1,1)--(0,1)--(0,0);
\draw[blue] (0.4,0.6)--(0.4,1);
\draw[red] (0.6,0.4)--(1,0.4);
\node[left] at (0.4,0.8) {$\uparrow$};
\node[below] at (0.8,0.4) {$\leftarrow$};
\end{tikzpicture}\right)
- 
\#\left( \begin{tikzpicture}[baseline=(current bounding box.center)] 
\draw (0,0)--(1,0)--(1,1)--(0,1)--(0,0);
\draw[blue] (0,0.6)--(0.4,0.6);
\draw[red] (0.6,0)--(0.6,0.4);
\node[above] at (0.2,0.6) {$\rightarrow$};
\node[right] at (0.6,0.2) {$\downarrow$};
\end{tikzpicture}\right)
-
\#\left(\begin{tikzpicture}[baseline=(current bounding box.center)] 
\draw (0,0)--(1,0)--(1,1)--(0,1)--(0,0);
\draw[red] (0,0.4)--(0.6,0.4);
\draw[blue] (0.4,0)--(0.4,0.6);
\node[above] at (0.2,0.4) {$\leftarrow$};
\node[right] at (0.4,0.2) {$\uparrow$};
\end{tikzpicture} \right) = \pm1,
\]
where the RHS is +1 for walks starting and ending on the top boundary and -1 for walks starting and ending on the bottom boundary. 

For a walk starting on one boundary and ending on the other we have
\[
\#\left(\begin{tikzpicture}[baseline=(current bounding box.center)] 
\draw (0,0)--(1,0)--(1,1)--(0,1)--(0,0);
\draw[red] (0.6,0.4)--(0.6,1);
\draw[blue] (0.4,0.6)--(1,0.6);
\node[left] at (0.6,0.8) {$\downarrow$};
\node[below] at (0.8,0.6) {$\rightarrow$};
\end{tikzpicture}\right) 
+
\#\left( \begin{tikzpicture}[baseline=(current bounding box.center)] 
\draw (0,0)--(1,0)--(1,1)--(0,1)--(0,0);
\draw[blue] (0.4,0.6)--(0.4,1);
\draw[red] (0.6,0.4)--(1,0.4);
\node[left] at (0.4,0.8) {$\uparrow$};
\node[below] at (0.8,0.4) {$\leftarrow$};
\end{tikzpicture}\right)
- 
\#\left( \begin{tikzpicture}[baseline=(current bounding box.center)] 
\draw (0,0)--(1,0)--(1,1)--(0,1)--(0,0);
\draw[blue] (0,0.6)--(0.4,0.6);
\draw[red] (0.6,0)--(0.6,0.4);
\node[above] at (0.2,0.6) {$\rightarrow$};
\node[right] at (0.6,0.2) {$\downarrow$};
\end{tikzpicture}\right)
-
\#\left(\begin{tikzpicture}[baseline=(current bounding box.center)] 
\draw (0,0)--(1,0)--(1,1)--(0,1)--(0,0);
\draw[red] (0,0.4)--(0.6,0.4);
\draw[blue] (0.4,0)--(0.4,0.6);
\node[above] at (0.2,0.4) {$\leftarrow$};
\node[right] at (0.4,0.2) {$\uparrow$};
\end{tikzpicture} \right) = 0.
\]
\end{lem}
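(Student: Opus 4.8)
The plan is to recognize the left-hand side as a signed count of the color changes of the walk, and then evaluate that count by telescoping. First I would note that each of the four faces appearing in the formula is exactly one of the switching faces from the rules~(\ref{eq:rules}), hence a face at which the walk changes color. Reading off the arrows, the first two faces are precisely those at which the walk passes from a (backward) red segment onto a (forward) blue segment, while the last two are those at which it passes from a blue segment onto a red segment. Writing the walk as a sequence of segments $s_1, \ldots, s_N$ with colors $c_1, \ldots, c_N \in \{\text{red}, \text{blue}\}$, the quantity in the lemma is therefore
\[ \#(\text{red}\to\text{blue}) - \#(\text{blue}\to\text{red}), \]
the signed number of color changes read along the walk.

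Next I would evaluate this signed count by telescoping. Assign $\varphi(\text{red}) = 0$ and $\varphi(\text{blue}) = 1$. Then at each consecutive pair $s_i, s_{i+1}$ a red-to-blue change contributes $\varphi(c_{i+1}) - \varphi(c_i) = +1$ and a blue-to-red change contributes $\varphi(c_{i+1}) - \varphi(c_i) = -1$, so the signed count is exactly $\sum_{i=1}^{N-1}\bigl(\varphi(c_{i+1})-\varphi(c_i)\bigr)$. The intermediate terms cancel, leaving
\[ \#(\text{red}\to\text{blue}) - \#(\text{blue}\to\text{red}) = \varphi(c_N) - \varphi(c_1), \]
so the left-hand side depends only on the colors of the first and last segments of the walk.

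Finally I would read off these two colors from the boundary data using the preceding lemma. A walk begins with a red segment exactly when it starts at a red path on the top boundary and with a blue segment exactly when it starts at a blue path on the bottom; by that lemma it can terminate only at a red path on the bottom or a blue path on the top. Thus a top endpoint is red if it is the start and blue if it is the end, while a bottom endpoint is blue if it is the start and red if it is the end. Consequently the two endpoint colors \emph{differ} precisely when the walk returns to the boundary it came from (top$\to$top or bottom$\to$bottom) and \emph{agree} precisely when it connects the two boundaries. By the telescoping identity this gives $\varphi(c_N)-\varphi(c_1)=0$ for a walk joining the two boundaries, and $\pm 1$ for a walk returning to its starting boundary, which is the asserted dichotomy.

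The only real content beyond this bookkeeping is getting the orientations in~(\ref{eq:rules}) right, since it is the direction of travel at each face that decides which faces are counted with a plus and which with a minus; the alternation of color and direction noted after~(\ref{eq:rules}) guarantees that the four listed types genuinely exhaust all color changes, so no other local configuration can occur. I expect this orientation check to be the main (if modest) obstacle. One should also keep track of the sign in the same-boundary case: the value is $+1$ for a walk returning to the top and $-1$ for one returning to the bottom, so that in all cases the magnitude of the signed count is $1$, which is the quantity relevant to the weight computation of the following section.
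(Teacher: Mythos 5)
Your proposal is correct and is essentially the paper's own argument: the paper observes that the first two face types are the switches from travelling backward to forward and the last two are the reverse, and then reads the answer off from whether the walk reverses its direction of travel; since along any walk red segments are always traversed backward and blue segments forward, your telescoping on colors with $\varphi(\text{red})=0$, $\varphi(\text{blue})=1$ is the same computation phrased in terms of colors rather than directions. The one substantive point where you go beyond the paper is the sign in the same-boundary case: your telescoping correctly gives $\varphi(c_N)-\varphi(c_1)=+1$ for a top-to-top walk but $-1$ for a bottom-to-bottom walk, whereas the lemma as stated (and the paper's proof, which says the net difference ``must be 1'' whenever the walk ends travelling opposite to how it started) asserts $+1$ in both cases. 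Your sign is the right one --- the bottom-to-bottom walk in the paper's running example passes through exactly one face of the third type and none of the others, so its left-hand side is $-1$ --- so the lemma should really read $\pm 1$ (with the sign you identify), and your closing remark that only the magnitude matters downstream is the correct way to reconcile this.
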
 
\begin{proof}
The first two terms in the summand give the number of times we switch from traveling backward to forward, the second two terms give the number of times we switch from traveling forward to backward. For a walk that starts and ends on the same boundary, it must end traveling in the opposite direction of how it starts. So the net difference must be 1. If the walk starts on the top boundary, it starts going backward so there must be an extra switch to traveling forward, and the opposite for the bottom boundary.  

For a walk that crosses between the boundaries, it must end traveling in the same direction that it starts. So the net difference must be 0.
\end{proof}

\begin{lem}\label{lem:pathcount}
For a walk starting at a path segment with $j$ paths to its right and ending at a path segment with $i$ paths to its right, we have
\[
\#\left(\begin{tikzpicture}[baseline=(current bounding box.center)] 
\draw (0,0)--(1,0)--(1,1)--(0,1)--(0,0);
\draw[red] (0.6,0.4)--(0.6,1);
\draw[blue] (0.4,0.6)--(1,0.6);
\node[left] at (0.6,0.8) {$\downarrow$};
\node[below] at (0.8,0.6) {$\rightarrow$};
\end{tikzpicture}\right) 
-
\#\left( \begin{tikzpicture}[baseline=(current bounding box.center)] 
\draw (0,0)--(1,0)--(1,1)--(0,1)--(0,0);
\draw[blue] (0.4,0.6)--(0.4,1);
\draw[red] (0.6,0.4)--(1,0.4);
\node[left] at (0.4,0.8) {$\uparrow$};
\node[below] at (0.8,0.4) {$\leftarrow$};
\end{tikzpicture}\right)
+
\#\left( \begin{tikzpicture}[baseline=(current bounding box.center)] 
\draw (0,0)--(1,0)--(1,1)--(0,1)--(0,0);
\draw[blue] (0,0.6)--(0.4,0.6);
\draw[red] (0.6,0)--(0.6,0.4);
\node[above] at (0.2,0.6) {$\rightarrow$};
\node[right] at (0.6,0.2) {$\downarrow$};
\end{tikzpicture}\right)
-
\#\left(\begin{tikzpicture}[baseline=(current bounding box.center)] 
\draw (0,0)--(1,0)--(1,1)--(0,1)--(0,0);
\draw[red] (0,0.4)--(0.6,0.4);
\draw[blue] (0.4,0)--(0.4,0.6);
\node[above] at (0.2,0.4) {$\leftarrow$};
\node[right] at (0.4,0.2) {$\uparrow$};
\end{tikzpicture} \right) = j-i.
\]
\end{lem}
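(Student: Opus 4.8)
The plan is to track a single integer statistic as we traverse the walk and to show that it changes in a completely controlled way at each of the faces in (\ref{eq:rules}). For a point $p$ on the walk, let $Q(p)$ be the number of path-strands that cross the horizontal line through $p$ and lie strictly to the right of $p$. Since the bead label of a boundary path is by definition the number of paths to its right, the starting point of the walk has $Q=j$ and the ending point has $Q=i$: both endpoints lie on a boundary, the horizontal line through the endpoint \emph{is} that boundary, and the strands crossing it to the right are exactly the boundary paths to the right. The lemma then reduces to understanding how $Q$ changes as we move.

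First I would show that $Q$ is constant along each maximal monochromatic segment of the walk, i.e. between two consecutive faces of the form (\ref{eq:rules}). The key point, coming from the definition of the procedure, is that the walk follows a single path until it reaches a face containing a path of the other color, which is precisely a turn; hence along a segment the path carrying the walk is crossed by no other strand, since a same-color strand cannot cross it and an other-color strand would force a turn. Consequently no strand can pass from one side of the walk to the other in the interior of a segment. On a horizontal segment the height is fixed and the walk sweeps past no transversal strand, so $Q$ is unchanged; on a vertical segment a strand to the right may alternate between its vertical and horizontal portions, but because it never crosses the walk and (lying in the interior of the segment) never terminates there, it keeps contributing to $Q$, so the count is again preserved. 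A little care is needed when several strands turn at exactly the walk's current height, which I would handle by always counting the strands crossing the line just below that height, a convention that does not disturb the endpoint values.

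Next I would carry out the local computation of the jump of $Q$ at each of the four faces in (\ref{eq:rules}), where the walk leaves one path and joins the crossing path of the other color. One checks directly that at such a face exactly one strand changes sides. The two faces appearing with a plus sign in the statement are exactly those in which the walk's blue edge is horizontal and directed to the right, and at these $Q$ decreases by $1$; the two faces appearing with a minus sign are those in which the walk's red edge is horizontal and directed to the left, and at these $Q$ increases by $1$. Summing these increments over the entire walk and using the segment-invariance gives
\[
i - j \;=\; -\Big(\#\text{(first)} + \#\text{(third)}\Big) + \Big(\#\text{(second)} + \#\text{(fourth)}\Big),
\]
which is exactly the asserted identity after rearranging terms.

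The step I expect to be the main obstacle is the invariance of $Q$ along vertical segments together with the bookkeeping at coincident heights: one must argue convincingly that the individual strands to the right can migrate between vertical and horizontal stretches without altering the total, and fix a counting convention at faces where other strands turn at the walk's height. Once $Q$ is established as a genuine invariant of each segment, the four-case check at the turns is routine, and the two boundary situations (a walk returning to its starting boundary versus crossing to the opposite one) are handled uniformly, since the argument only ever uses the endpoint values $j$ and $i$ and never the global shape of the walk.
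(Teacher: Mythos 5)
Your proof is correct and follows essentially the same route as the paper: the paper's argument is precisely that the ``number of paths to the right'' is constant along each path segment and jumps by $-1$ at the two plus-sign turns and by $+1$ at the two minus-sign turns, so the total change from $j$ to $i$ yields the identity. Your horizontal-line crossing count $Q$ is just a careful formalization of the invariant that the paper treats as self-evident, with the same sign bookkeeping at the four turn types.
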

\begin{proof}
Steps of the form 
\[
\begin{tikzpicture}[baseline=(current bounding box.center)] 
\draw (0,0)--(1,0)--(1,1)--(0,1)--(0,0);
\draw[red] (0.6,0.4)--(0.6,1);
\draw[blue] (0.4,0.6)--(1,0.6);
\node[left] at (0.6,0.8) {$\downarrow$};
\node[below] at (0.8,0.6) {$\rightarrow$};
\end{tikzpicture}
\text{ and }
\begin{tikzpicture}[baseline=(current bounding box.center)] 
\draw (0,0)--(1,0)--(1,1)--(0,1)--(0,0);
\draw[blue] (0,0.6)--(0.4,0.6);
\draw[red] (0.6,0)--(0.6,0.4);
\node[above] at (0.2,0.6) {$\rightarrow$};
\node[right] at (0.6,0.2) {$\downarrow$};
\end{tikzpicture}
\]
always put us on a new segment of path that has one fewer paths to its right. Steps of the form 
\[
\begin{tikzpicture}[baseline=(current bounding box.center)] 
\draw (0,0)--(1,0)--(1,1)--(0,1)--(0,0);
\draw[blue] (0.4,0.6)--(0.4,1);
\draw[red] (0.6,0.4)--(1,0.4);
\node[left] at (0.4,0.8) {$\uparrow$};
\node[below] at (0.8,0.4) {$\leftarrow$};
\end{tikzpicture}
\text{ and }
\begin{tikzpicture}[baseline=(current bounding box.center)] 
\draw (0,0)--(1,0)--(1,1)--(0,1)--(0,0);
\draw[red] (0,0.4)--(0.6,0.4);
\draw[blue] (0.4,0)--(0.4,0.6);
\node[above] at (0.2,0.4) {$\leftarrow$};
\node[right] at (0.4,0.2) {$\uparrow$};
\end{tikzpicture}
\]
always puts us on a new path segment with one more path to its right.  As the walk starts with $j$ paths to its right and ends with $i$ paths to its right, we have the identity.
\end{proof}

\begin{lem}\label{lem:restriction} 
A 
\begin{tikzpicture}[baseline=(current bounding box.center)] 
\draw (0,0)--(1,0)--(1,1)--(0,1)--(0,0);
\draw[red] (0,0.4)--(0.6,0.4);
\draw[blue] (0.4,0)--(0.4,0.6);
\node[above] at (0.2,0.4) {$\leftarrow$};
\node[right] at (0.4,0.2) {$\uparrow$};
\end{tikzpicture} 
cannot be immediately followed and preceded by a 
\begin{tikzpicture}[baseline=(current bounding box.center)] 
\draw (0,0)--(1,0)--(1,1)--(0,1)--(0,0);
\draw[red] (0.6,0.4)--(0.6,1);
\draw[blue] (0.4,0.6)--(1,0.6);
\node[left] at (0.6,0.8) {$\downarrow$};
\node[below] at (0.8,0.6) {$\rightarrow$};
\end{tikzpicture}
and vice versa.  Similarly, a
\begin{tikzpicture}[baseline=(current bounding box.center)] 
\draw (0,0)--(1,0)--(1,1)--(0,1)--(0,0);
\draw[blue] (0.4,0.6)--(0.4,1);
\draw[red] (0.6,0.4)--(1,0.4);
\node[left] at (0.4,0.8) {$\uparrow$};
\node[below] at (0.8,0.4) {$\leftarrow$};
\end{tikzpicture}
cannot be immediately  followed and preceded by a
\begin{tikzpicture}[baseline=(current bounding box.center)] 
\draw (0,0)--(1,0)--(1,1)--(0,1)--(0,0);
\draw[blue] (0,0.6)--(0.4,0.6);
\draw[red] (0.6,0)--(0.6,0.4);
\node[above] at (0.2,0.6) {$\rightarrow$};
\node[right] at (0.6,0.2) {$\downarrow$};
\end{tikzpicture}
and vice versa.  
\end{lem}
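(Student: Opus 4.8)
The plan is to argue by contradiction after reducing to a single configuration. The four forbidden triples are interchanged by the evident symmetries of the local rules in (\ref{eq:rules}): interchanging the two coordinate axes carries the picture pair $\{\text{Type }1,\text{Type }4\}$ to $\{\text{Type }2,\text{Type }3\}$, and reversing the direction in which a walk is read (equivalently swapping the two colors) interchanges the two members of each pair. The structural argument below applies \emph{mutatis mutandis} in each case, so it suffices to rule out the pattern $(1,4,1)$. Assume then that three consecutive switch faces $P,Q,R$ of a walk have types $1,4,1$, with crossing points $p,q,r$.

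Before the main estimate I would record two facts. First, the walk is a \emph{simple} curve: whenever the walk reaches a face containing paths of both colors it must switch color, hence turn, rather than pass straight through; consequently its monochromatic segments traverse only faces lacking the other color, two distinct segments can meet only at a shared switch face, and a genuine self-crossing would force the walk to re-enter a previously traversed segment, which by the reversibility of (\ref{eq:rules}) (exactly as in the proof of Lemma~\ref{lem:end}) produces a loop and is therefore impossible. Second, a blue segment is traversed forward and is weakly monotone to the north-east, while a red segment is traversed backward and is weakly monotone to the south-west; in addition the four pictures pin down the first and last step of each segment. For $(1,4,1)$ this yields: the walk leaves $P$ heading east on blue and enters $Q$ from the south, so $q$ is strictly north-east of $p$; it leaves $Q$ heading west on red and enters $R$ from the north, so $r$ is strictly south-west of $q$; and it leaves $R$ heading east on blue.

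The contradiction I would then extract is a forced monochromatic crossing. The blue arc $\alpha$ from $p$ to $q$ (east, then north) and the red arc $\gamma$ from $q$ to $r$ (west, then south) meet only at $q$ and together form a ``hook'' bounding a region together with the red segment entering $P$ from the north. The walk emerges from $R$ on a fresh blue arc $\beta$ which, being traversed forward, can only move north-east; since it starts at $r$ it is pushed toward the interior of that region, and because it is north-east monotone it cannot retreat south-west through the opening of the hook. To reach the boundary it must therefore cross the right/top walls supplied by $\alpha$ (a second blue arc, violating non-intersection of equal colors) or by $\gamma$ (forcing a self-crossing of the walk, impossible by the first structural fact). \textbf{The main obstacle} is making ``the opening of the hook is sealed'' precise: the position of $r$ relative to $p$ splits into cases, and in the cases where $r$ lies below $p$ one must invoke the incoming red segment at $P$, \emph{together with} the global fact that the whole walk is a simple arc whose endpoints lie on the top/bottom boundary, to show that $\beta$ cannot instead switch onto some unrelated red path and escape to the south-west. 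I expect this planarity bookkeeping, rather than any single local check, to be the technical heart of the argument.
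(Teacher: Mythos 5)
Your proposal follows essentially the same route as the paper: the paper likewise draws the three consecutive switch faces, observes that the two solid connecting arcs form a hook, and concludes that either the blue segment leaving the last face or the red segment entering the first face is forced into an illegal intersection with a solid arc (a same-color crossing, or a bicolored face that would create an intermediate switch step), leaving the planar case analysis at the same level of informality you do and dismissing the remaining patterns with ``the other cases can be done similarly.'' The only slip is in your symmetry bookkeeping — walk reversal combined with a color swap interchanges types $1$ and $2$, resp.\ $3$ and $4$, not the two members of the pairs $\{1,4\}$ and $\{2,3\}$, for which you instead need the $180^\circ$ rotation with a color swap — but this does not affect the substance of the argument.
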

\begin{proof}
Consider a face in which the walk takes a step of the form
\begin{tikzpicture}[baseline=(current bounding box.center)] 
\draw (0,0)--(1,0)--(1,1)--(0,1)--(0,0);
\draw[red] (0,0.4)--(0.6,0.4);
\draw[blue] (0.4,0)--(0.4,0.6);
\node[above] at (0.2,0.4) {$\leftarrow$};
\node[right] at (0.4,0.2) {$\uparrow$};
\end{tikzpicture}. 
Suppose that it was immediately followed and preceded by  
\begin{tikzpicture}[baseline=(current bounding box.center)] 
\draw (0,0)--(1,0)--(1,1)--(0,1)--(0,0);
\draw[red] (0.6,0.4)--(0.6,1);
\draw[blue] (0.4,0.6)--(1,0.6);
\node[left] at (0.6,0.8) {$\downarrow$};
\node[below] at (0.8,0.6) {$\rightarrow$};
\end{tikzpicture}. Then we must have a configuration of the form
\[
\begin{tikzpicture}[baseline=(current bounding box.center)] 
\draw (2,2)--(3,2)--(3,3)--(2,3)--(2,2);
\draw[red] (2,2.4)--(2.6,2.4);
\draw[blue] (2.4,2)--(2.4,2.6);
\node[above] at (2.2,2.4) {$\leftarrow$};
\node[right] at (2.4,2.2) {$\uparrow$};

\draw (-1,0)--(0,0)--(0,1)--(-1,1)--(-1,0);
\draw[red] (-0.4,0.4)--(-0.4,1);
\draw[blue] (-0.6,0.6)--(0,0.6);
\node[left] at (-0.4,0.8) {$\downarrow$};
\node[below] at (-0.2,0.6) {$\rightarrow$};

\draw (1,-1)--(2,-1)--(2,0)--(1,0)--(1,-1);
\draw[red] (1.6,-0.6)--(1.6,0);
\draw[blue] (1.4,-0.4)--(2,-0.4);
\node[left] at (1.6,-0.2) {$\downarrow$};
\node[below] at (1.8,-0.4) {$\rightarrow$};

\draw[blue] (2,-0.4)--(2.4,-0.4)--(2.4,2);
\draw[red] (2,2.4)--(-0.4,2.4)--(-0.4,0.4);
\draw[blue, dashed] (0,0.6)--(0.6,0.6); 
\draw[red, dashed] (1.6,0)--(1.6,0.6);

\end{tikzpicture}.
\]
No matter how the two lower faces are arranged, either the dashed blue path segment or the dashed red path segment must intersect one of the solid path segments. This would mean that there is a step of our walk in between those drawn, contradicting that these steps immediately follow and precede \begin{tikzpicture}[baseline=(current bounding box.center)] 
\draw (0,0)--(1,0)--(1,1)--(0,1)--(0,0);
\draw[red] (0,0.4)--(0.6,0.4);
\draw[blue] (0.4,0)--(0.4,0.6);
\node[above] at (0.2,0.4) {$\leftarrow$};
\node[right] at (0.4,0.2) {$\uparrow$};
\end{tikzpicture}.

The other cases can be done similarly.
\end{proof}
\begin{lem}\label{lem:cornercount}
For a walk starting and ending on the top boundary we have
\[
\#\left(\begin{tikzpicture}[baseline=(current bounding box.center)] 
\draw (0,0)--(1,0)--(1,1)--(0,1)--(0,0);
\draw[red] (0.6,0.4)--(0.6,1);
\draw[blue] (0.4,0.6)--(1,0.6);
\node[left] at (0.6,0.8) {$\downarrow$};
\node[below] at (0.8,0.6) {$\rightarrow$};
\end{tikzpicture}\right) 
-
\#\left( \begin{tikzpicture}[baseline=(current bounding box.center)] 
\draw (0,0)--(1,0)--(1,1)--(0,1)--(0,0);
\draw[blue] (0.4,0.6)--(0.4,1);
\draw[red] (0.6,0.4)--(1,0.4);
\node[left] at (0.4,0.8) {$\uparrow$};
\node[below] at (0.8,0.4) {$\leftarrow$};
\end{tikzpicture}\right)
- 
\#\left( \begin{tikzpicture}[baseline=(current bounding box.center)] 
\draw (0,0)--(1,0)--(1,1)--(0,1)--(0,0);
\draw[blue] (0,0.6)--(0.4,0.6);
\draw[red] (0.6,0)--(0.6,0.4);
\node[above] at (0.2,0.6) {$\rightarrow$};
\node[right] at (0.6,0.2) {$\downarrow$};
\end{tikzpicture}\right)
+
\#\left(\begin{tikzpicture}[baseline=(current bounding box.center)] 
\draw (0,0)--(1,0)--(1,1)--(0,1)--(0,0);
\draw[red] (0,0.4)--(0.6,0.4);
\draw[blue] (0.4,0)--(0.4,0.6);
\node[above] at (0.2,0.4) {$\leftarrow$};
\node[right] at (0.4,0.2) {$\uparrow$};
\end{tikzpicture} \right) = \pm1
\]
where the RHS is $+1$ when the walk ends to the right of where it starts and $-1$ if it ends to the left. For a walk starting and ending on the bottom boundary we have
\[
\#\left(\begin{tikzpicture}[baseline=(current bounding box.center)] 
\draw (0,0)--(1,0)--(1,1)--(0,1)--(0,0);
\draw[red] (0.6,0.4)--(0.6,1);
\draw[blue] (0.4,0.6)--(1,0.6);
\node[left] at (0.6,0.8) {$\downarrow$};
\node[below] at (0.8,0.6) {$\rightarrow$};
\end{tikzpicture}\right) 
-
\#\left( \begin{tikzpicture}[baseline=(current bounding box.center)] 
\draw (0,0)--(1,0)--(1,1)--(0,1)--(0,0);
\draw[blue] (0.4,0.6)--(0.4,1);
\draw[red] (0.6,0.4)--(1,0.4);
\node[left] at (0.4,0.8) {$\uparrow$};
\node[below] at (0.8,0.4) {$\leftarrow$};
\end{tikzpicture}\right)
- 
\#\left( \begin{tikzpicture}[baseline=(current bounding box.center)] 
\draw (0,0)--(1,0)--(1,1)--(0,1)--(0,0);
\draw[blue] (0,0.6)--(0.4,0.6);
\draw[red] (0.6,0)--(0.6,0.4);
\node[above] at (0.2,0.6) {$\rightarrow$};
\node[right] at (0.6,0.2) {$\downarrow$};
\end{tikzpicture}\right)
+
\#\left(\begin{tikzpicture}[baseline=(current bounding box.center)] 
\draw (0,0)--(1,0)--(1,1)--(0,1)--(0,0);
\draw[red] (0,0.4)--(0.6,0.4);
\draw[blue] (0.4,0)--(0.4,0.6);
\node[above] at (0.2,0.4) {$\leftarrow$};
\node[right] at (0.4,0.2) {$\uparrow$};
\end{tikzpicture} \right) = \pm1
\]
where the RHS is $+1$ when the walk ends to the left of where it starts and $-1$ if it ends to the right. For a walk starting on one boundary and ending on the other we have
\[
\#\left(\begin{tikzpicture}[baseline=(current bounding box.center)] 
\draw (0,0)--(1,0)--(1,1)--(0,1)--(0,0);
\draw[red] (0.6,0.4)--(0.6,1);
\draw[blue] (0.4,0.6)--(1,0.6);
\node[left] at (0.6,0.8) {$\downarrow$};
\node[below] at (0.8,0.6) {$\rightarrow$};
\end{tikzpicture}\right) 
-
\#\left( \begin{tikzpicture}[baseline=(current bounding box.center)] 
\draw (0,0)--(1,0)--(1,1)--(0,1)--(0,0);
\draw[blue] (0.4,0.6)--(0.4,1);
\draw[red] (0.6,0.4)--(1,0.4);
\node[left] at (0.4,0.8) {$\uparrow$};
\node[below] at (0.8,0.4) {$\leftarrow$};
\end{tikzpicture}\right)
- 
\#\left( \begin{tikzpicture}[baseline=(current bounding box.center)] 
\draw (0,0)--(1,0)--(1,1)--(0,1)--(0,0);
\draw[blue] (0,0.6)--(0.4,0.6);
\draw[red] (0.6,0)--(0.6,0.4);
\node[above] at (0.2,0.6) {$\rightarrow$};
\node[right] at (0.6,0.2) {$\downarrow$};
\end{tikzpicture}\right)
+
\#\left(\begin{tikzpicture}[baseline=(current bounding box.center)] 
\draw (0,0)--(1,0)--(1,1)--(0,1)--(0,0);
\draw[red] (0,0.4)--(0.6,0.4);
\draw[blue] (0.4,0)--(0.4,0.6);
\node[above] at (0.2,0.4) {$\leftarrow$};
\node[right] at (0.4,0.2) {$\uparrow$};
\end{tikzpicture} \right) = 0.
\]\end{lem}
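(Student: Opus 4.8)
The plan is to recognize the left-hand side (common to all three displayed identities) as the \emph{net turning} the walk accumulates at its color-switching faces, and then to pin that quantity down by a global turning-number computation.

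First I would observe that each of the four moves in (\ref{eq:rules}) is a right-angle corner of the walk, and I would classify it as a left or a right turn, tracking the direction of travel (which by the discussion after (\ref{eq:rules}) is downward/leftward on red and upward/rightward on blue). One checks that the first and fourth moves turn the walk to the left, while the second and third turn it to the right. Hence the displayed combination $\#(\text{move }1)-\#(\text{move }2)-\#(\text{move }3)+\#(\text{move }4)$ is precisely
\[
\#(\text{left turns at switch faces})-\#(\text{right turns at switch faces}),
\]
that is, the contribution of the switch faces to the total turning of the walk (measured so that one right-angle turn counts as $\pm1$).

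Next I would relate this switch contribution to the \emph{total} turning $T$ of the walk (the signed sum over \emph{all} of its corners, switch or not). Since the walk is rectilinear, its edges alternate between vertical and horizontal, so its corners alternate between ``vertical-to-horizontal'' and ``horizontal-to-vertical'' type; and because the walk leaves and re-meets the boundary along vertical edges, its first and last edges are both vertical, so the two corner types occur equally often. The crucial observation is that the correspondence between turn direction and corner type is \emph{opposite} at the two kinds of corner: using that a blue path read forward is an up-right staircase and a red path read backward is a down-left staircase, a left turn at a single-color (internal) corner is always a horizontal-to-vertical corner, whereas a left turn at a switch face is always a vertical-to-horizontal corner. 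Combining these facts, the internal corners contribute to $T$ exactly the negative of the switch corners, so the switch contribution equals $\tfrac12 T$; hence the left-hand side equals $\tfrac12 T$.

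Finally I would compute $T$ topologically. By Lemma~\ref{lem:end} the walk never loops, and the same reverse-determinism argument (together with the fact that distinct walks do not cross) shows that a single walk is a simple arc meeting the boundary only at its two endpoints. Closing it along the top and/or bottom boundary produces a simple closed rectilinear curve of total turning $\pm 4$; subtracting the turning of the straight boundary-return path (zero) and of the two corners where the walk meets the boundary leaves $T=\pm2$ when the endpoints lie on the same boundary and $T=0$ when they lie on opposite boundaries. The orientation of the closed curve — equivalently, whether the free endpoint lies to the right or to the left of the start — fixes the sign, yielding $\pm1$ in the same-boundary cases and $0$ in the crossing case; the left/right convention flips between the top and bottom cases because the enclosed region sits on opposite sides of the closing segment, which is exactly the sign behavior asserted in the statement. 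I expect the main obstacle to lie in this last step: rigorously establishing simplicity of the walk (ruling out transverse self-contact, corner-touching being harmless) and getting the orientation bookkeeping right in each of the three boundary configurations. Lemma~\ref{lem:restriction} is what prevents the walk from doubling back on itself at a switch and so feeds directly into the simplicity argument, while the identity $\text{LHS}=\tfrac12 T$ is robust, so all the genuine content sits in the evaluation of $T$.
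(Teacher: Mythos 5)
Your proposal reaches the correct conclusion but gets there by a genuinely different route from the paper. Both arguments begin the same way: the left-hand side equals $\#(\text{left turns at switch faces})-\#(\text{right turns at switch faces})$, and the total turning of the walk is $\pm 2$ or $0$ according to where it starts and ends (the paper simply asserts this count of lefts minus rights; your closed-curve/rotation-index derivation of it is more principled but, as you note, leans on simplicity of the walk, which neither you nor the paper establishes beyond Lemma~\ref{lem:end}). Where you diverge is the passage from total turning to switch turning. The paper analyzes each single-colored path segment according to the turn directions at its two endpoints, invokes Lemma~\ref{lem:restriction} to forbid three consecutive same-direction switch turns, and then solves a small linear system in the counts $L_1,L_2,R_1,R_2$ of isolated and paired switch turns. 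You instead observe that the walk's first and last edges are both vertical, so vertical-to-horizontal and horizontal-to-vertical corners are equinumerous, and that the assignment of left/right to these two corner types is reversed between switch corners and internal corners; this immediately forces the switch contribution to equal the internal contribution, hence half the total turning. Your argument is shorter, avoids Lemma~\ref{lem:restriction} entirely, and avoids the paper's case analysis of first and last segments. One genuine error of wording: you write that ``the internal corners contribute to $T$ exactly the negative of the switch corners,'' which would give $T=0$ and would not yield your conclusion; the correct statement, which is what your own observations actually prove, is that the internal contribution \emph{equals} the switch contribution (writing $a,c$ for the V-to-H and H-to-V switch corners and $b,d$ for the V-to-H and H-to-V internal corners, one has $a+b=c+d$, the switch contribution is $a-c$, the internal contribution is $d-b$, and these are equal). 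With that one word corrected, the argument is sound and the sign bookkeeping in your three boundary cases checks out.
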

\begin{proof}
Let us consider the case where the walk starts and ends on the top boundary. Note that the walk is made up of straight sections of paths and right-angled corners. These corners can either occur on a path segment or at a face where the walk switches from one segment to the other. 

Suppose that the walk ends to the right of where it starts. Then we must have that $\#($left turns$) = \#($right turns$)+2$, where left and right are defined relative to the direction of travel of the walk. We can restate the lemma as
\[
\#(\text{left turns where the walk switches color}) - \#(\text{right turns where the walk switches colors})  = 1.
\]

For turns that occur on a path segment, the difference between the number of left turns and the number of right turns can only be $\pm1$ or 0. In fact, we can say precisely how this difference depends on the corners the path segment starts and ends at:
\[
\begin{aligned}
\begin{tikzpicture}[baseline=(current bounding box.center)] 
\draw (0,0) rectangle (0.5,0.5); \draw (0.5,0.25)--(1,0.25);\draw (1,0) rectangle (1.5,0.5);
\node at (0.25,0.25) {$R$}; \node[above] at (0.75,0.25) {$R$}; \node at (1.25,0.25) {$R$};
\end{tikzpicture} \\
\begin{tikzpicture}[baseline=(current bounding box.center)] 
\draw (0,0) rectangle (0.5,0.5); \draw (0.5,0.25)--(1,0.25);\draw (1,0) rectangle (1.5,0.5);
\node at (0.25,0.25) {$R$}; \node[above] at (0.75,0.25) {$0$}; \node at (1.25,0.25) {$L$};
\end{tikzpicture} \\
\begin{tikzpicture}[baseline=(current bounding box.center)] 
\draw (0,0) rectangle (0.5,0.5); \draw (0.5,0.25)--(1,0.25);\draw (1,0) rectangle (1.5,0.5);
\node at (0.25,0.25) {$L$}; \node[above] at (0.75,0.25) {$0$}; \node at (1.25,0.25) {$R$};
\end{tikzpicture} \\
\begin{tikzpicture}[baseline=(current bounding box.center)] 
\draw (0,0) rectangle (0.5,0.5); \draw (0.5,0.25)--(1,0.25);\draw (1,0) rectangle (1.5,0.5);
\node at (0.25,0.25) {$L$}; \node[above] at (0.75,0.25) {$L$}; \node at (1.25,0.25) {$L$};
\end{tikzpicture}
\end{aligned}
\]
where the box
 \begin{tikzpicture}[baseline=(current bounding box.center)] 
\draw (0,0) rectangle (0.75,0.75);
\node at (0.375,0.375) {$L/R$}; 
\end{tikzpicture} 
indicates a left/right turn at a corner where the walk switches color, and the line segment
\begin{tikzpicture}
\draw (0,0.25)--(1,0.25);
\node[above] at (0.5,0.25) {$L/R/0$}; 
\end{tikzpicture} 
 indicates the segment of path connecting the corners has a net left/right/equal number of turns. We also need to consider what can happen to the first and the last path segments, there are two possibilities for each
\[
\begin{tabular}{c|c}
\text{First: } &  \text{Last: } \\
\hline
\begin{tikzpicture}[baseline=(current bounding box.center)] 
\draw (0.5,0.25)--(1,0.25); \draw (1,0) rectangle (1.5,0.5);
\node[above] at (0.75,0.25) {$R$}; \node at (1.25,0.25) {$R$};
\end{tikzpicture} 
&
\begin{tikzpicture}[baseline=(current bounding box.center)] 
\draw (0,0) rectangle (0.5,0.5); \draw (0.5,0.25)--(1,0.25);
\node at (0.25,0.25) {$R$}; \node[above] at (0.75,0.25) {$0$}; 
\end{tikzpicture} 
\\
\begin{tikzpicture}[baseline=(current bounding box.center)] 
\draw (0.5,0.25)--(1,0.25);\draw (1,0) rectangle (1.5,0.5);
\node[above] at (0.75,0.25) {$0$}; \node at (1.25,0.25) {$L$};
\end{tikzpicture} 
&
\begin{tikzpicture}[baseline=(current bounding box.center)] 
\draw (0,0) rectangle (0.5,0.5); \draw (0.5,0.25)--(1,0.25);
\node at (0.25,0.25) {$L$}; \node[above] at (0.75,0.25) {$L$};
\end{tikzpicture}
\end{tabular}
\]

\noindent For the walk in example (\ref{ex:onewalk}), reproduced below, the sequence of turns is given by
\begin{center}
\resizebox{0.4\textwidth}{!}{
\begin{tikzpicture}[baseline=(current bounding box.center)] 
\draw[help lines] (0,0) grid (11,7);
\draw[red, ultra thick, ->] (2.6,7)--(2.6,4.6);
\draw[blue, ultra thick, ->] (2.6,4.6)--(3.6,4.6);
\draw[red, ultra thick, ->] (3.6,4.6)--(3.6,3.4)--(2.4,3.4);
\draw[blue, ultra thick, ->] (2.4,3.4)--(2.4,4.4);
\draw[red, ultra thick, ->] (2.4,4.4)--(0.6,4.4)--(0.6,2.6);
\draw[blue, ultra thick, ->] (0.6,2.6)--(1.6,2.6);
\draw[red, ultra thick, ->] (1.6,2.6)--(1.6,1.6);
\draw[blue, ultra thick, ->] (1.6,1.6)--(4.6,1.6);
\draw[red, ultra thick, ->] (4.6,1.6)--(4.6,0.6);
\draw[blue, ultra thick, ->] (4.6,0.6)--(10.4,0.6)--(10.4,7);
\end{tikzpicture}
}
\\
\begin{tikzpicture}[baseline=(current bounding box.center)] 
\draw (0.5,0.25)--(1,0.25);\draw (1,0) rectangle (1.5,0.5);
\draw (1.5,0.25)--(2,0.25);\draw (2,0) rectangle (2.5,0.5);
\draw (2.5,0.25)--(3,0.25);\draw (3,0) rectangle (3.5,0.5);
\draw (3.5,0.25)--(4,0.25);\draw (4,0) rectangle (4.5,0.5);
\draw (4.5,0.25)--(5,0.25);\draw (5,0) rectangle (5.5,0.5);
\draw (5.5,0.25)--(6,0.25);\draw (6,0) rectangle (6.5,0.5);
\draw (6.5,0.25)--(7,0.25);\draw (7,0) rectangle (7.5,0.5);
\draw (7.5,0.25)--(8,0.25);\draw (8,0) rectangle (8.5,0.5);
\draw (8.5,0.25)--(9,0.25);\draw (9,0) rectangle (9.5,0.5);
\draw (9.5,0.25)--(10,0.25);

\node[above] at (0.75,0.25) {$0$}; \node at (1.25,0.25) {$L$};
\node[above] at (1.75,0.25) {$0$}; \node at (2.25,0.25) {$R$};
\node[above] at (2.75,0.25) {$R$}; \node at (3.25,0.25) {$R$};
\node[above] at (3.75,0.25) {$0$}; \node at (4.25,0.25) {$L$};
\node[above] at (4.75,0.25) {$L$}; \node at (5.25,0.25) {$L$};
\node[above] at (5.75,0.25) {$0$}; \node at (6.25,0.25) {$R$};
\node[above] at (6.75,0.25) {$0$}; \node at (7.25,0.25) {$L$};
\node[above] at (7.75,0.25) {$0$}; \node at (8.25,0.25) {$R$};
\node[above] at (8.75,0.25) {$0$}; \node at (9.25,0.25) {$L$};
\node[above] at (9.75,0.25) {$L$};
\end{tikzpicture}.
\end{center}

Lemma  \ref{lem:restriction} can be interpreted as saying that, looking only at turns occurring where the walk switches colors, we cannot have three rights or three lefts in a row.  That is, we cannot have 
\begin{center}
\begin{tikzpicture}[baseline=(current bounding box.center)] 
\draw (1,0) rectangle (1.5,0.5);
\draw (1.5,0.25)--(2,0.25);\draw (2,0) rectangle (2.5,0.5);
\draw (2.5,0.25)--(3,0.25);\draw (3,0) rectangle (3.5,0.5);
\node at (1.25,0.25) {$L$};
\node at (2.25,0.25) {$L$};
\node at (3.25,0.25) {$L$};
\end{tikzpicture} 
 \;\;\; or \;\;\; 
\begin{tikzpicture}[baseline=(current bounding box.center)] 
\draw (1,0) rectangle (1.5,0.5);
\draw (1.5,0.25)--(2,0.25);\draw (2,0) rectangle (2.5,0.5);
\draw (2.5,0.25)--(3,0.25);\draw (3,0) rectangle (3.5,0.5);
\node at (1.25,0.25) {$R$};
\node at (2.25,0.25) {$R$};
\node at (3.25,0.25) {$R$};
\end{tikzpicture}.

\end{center}

We see that left turns where the walk switches colors occur in pairs 
\begin{tikzpicture}
\draw (1,0) rectangle (1.5,0.5);
\draw (1.5,0.25)--(2,0.25);\draw (2,0) rectangle (2.5,0.5);
\node at (1.25,0.25) {$L$};
\node at (2.25,0.25) {$L$};
\node[above] at (1.75,0.25) {$L$}; 
\end{tikzpicture} 
or alone 
\begin{tikzpicture}
\draw (0.5,0.25)--(1,0.25);
\draw (1,0) rectangle (1.5,0.5);
\draw (1.5,0.25)--(2,0.25);
\node at (1.25,0.25) {$L$};
\end{tikzpicture},
similarly for right turns where the walk changes color.  Let $L_1$ (resp. $R_1$) be the number of times the left (resp. right) turns appear alone and $L_2$ (resp. $R_2$) be the number of times they form pairs. Each pair of left turns where the walk switches color contributes three left turns in total, as the path segment connecting them also contribute a left turn, while the lefts that occur alone only contribute a single left turn as the path segment connecting them to the adjacent rights contribute a net zero. Taking into the account the first and the last path segment, from the preceding discussion we have
\[
\begin{aligned}
\#(\text{left turns}) - \#(\text{right turns}) =  
\begin{cases}
L_1 + 3L_2 - R_1 - 3R_2 - 1, & \text{if (first,last)}=\left(
\begin{tikzpicture}
\draw (0.5,0.25)--(1,0.25); \draw (1,0) rectangle (1.5,0.5);
\node[above] at (0.75,0.25) {$R$}; \node at (1.25,0.25) {$R$};
\end{tikzpicture}, 
\begin{tikzpicture}
\draw (0,0) rectangle (0.5,0.5); \draw (0.5,0.25)--(1,0.25);
\node at (0.25,0.25) {$R$}; \node[above] at (0.75,0.25) {$0$}; 
\end{tikzpicture}\right)  \\
L_1 + 3L_2 - R_1 - 3R_2, & \text{if (first,last)}=\left(
\begin{tikzpicture}
\draw (0.5,0.25)--(1,0.25); \draw (1,0) rectangle (1.5,0.5);
\node[above] at (0.75,0.25) {$R$}; \node at (1.25,0.25) {$R$};
\end{tikzpicture}, 
\begin{tikzpicture}
\draw (0,0) rectangle (0.5,0.5); \draw (0.5,0.25)--(1,0.25);
\node at (0.25,0.25) {$L$}; \node[above] at (0.75,0.25) {$L$}; 
\end{tikzpicture}\right)  \\ 
L_1 + 3L_2 - R_1 - 3R_2, & \text{if (first,last)}=\left(
\begin{tikzpicture}
\draw (0.5,0.25)--(1,0.25); \draw (1,0) rectangle (1.5,0.5);
\node[above] at (0.75,0.25) {$0$}; \node at (1.25,0.25) {$L$};
\end{tikzpicture}, 
\begin{tikzpicture}
\draw (0,0) rectangle (0.5,0.5); \draw (0.5,0.25)--(1,0.25);
\node at (0.25,0.25) {$R$}; \node[above] at (0.75,0.25) {$0$}; 
\end{tikzpicture}\right)  \\
L_1 + 3L_2 - R_1 - 3R_2 + 1, & \text{if (first,last)}=\left(
\begin{tikzpicture}
\draw (0.5,0.25)--(1,0.25); \draw (1,0) rectangle (1.5,0.5);
\node[above] at (0.75,0.25) {$0$}; \node at (1.25,0.25) {$L$};
\end{tikzpicture}, 
\begin{tikzpicture}
\draw (0,0) rectangle (0.5,0.5); \draw (0.5,0.25)--(1,0.25);
\node at (0.25,0.25) {$L$}; \node[above] at (0.75,0.25) {$L$}; 
\end{tikzpicture}\right)  \\
\end{cases} = 2.
\end{aligned}
\]
Counting the number of the lefts and rights, we must also have
\[
L_1+L_2-R_1-R_2 = 
\begin{cases}
- 1, & \text{if (first,last)}=\left(
\begin{tikzpicture}
\draw (0.5,0.25)--(1,0.25); \draw (1,0) rectangle (1.5,0.5);
\node[above] at (0.75,0.25) {$R$}; \node at (1.25,0.25) {$R$};
\end{tikzpicture}, 
\begin{tikzpicture}
\draw (0,0) rectangle (0.5,0.5); \draw (0.5,0.25)--(1,0.25);
\node at (0.25,0.25) {$R$}; \node[above] at (0.75,0.25) {$0$}; 
\end{tikzpicture}\right)  \\
0, & \text{if (first,last)}=\left(
\begin{tikzpicture}
\draw (0.5,0.25)--(1,0.25); \draw (1,0) rectangle (1.5,0.5);
\node[above] at (0.75,0.25) {$R$}; \node at (1.25,0.25) {$R$};
\end{tikzpicture}, 
\begin{tikzpicture}
\draw (0,0) rectangle (0.5,0.5); \draw (0.5,0.25)--(1,0.25);
\node at (0.25,0.25) {$L$}; \node[above] at (0.75,0.25) {$L$}; 
\end{tikzpicture}\right)  \\ 
0, & \text{if (first,last)}=\left(
\begin{tikzpicture}
\draw (0.5,0.25)--(1,0.25); \draw (1,0) rectangle (1.5,0.5);
\node[above] at (0.75,0.25) {$0$}; \node at (1.25,0.25) {$L$};
\end{tikzpicture}, 
\begin{tikzpicture}
\draw (0,0) rectangle (0.5,0.5); \draw (0.5,0.25)--(1,0.25);
\node at (0.25,0.25) {$R$}; \node[above] at (0.75,0.25) {$0$}; 
\end{tikzpicture}\right)  \\
1, & \text{if (first,last)}=\left(
\begin{tikzpicture}
\draw (0.5,0.25)--(1,0.25); \draw (1,0) rectangle (1.5,0.5);
\node[above] at (0.75,0.25) {$0$}; \node at (1.25,0.25) {$L$};
\end{tikzpicture}, 
\begin{tikzpicture}
\draw (0,0) rectangle (0.5,0.5); \draw (0.5,0.25)--(1,0.25);
\node at (0.25,0.25) {$L$}; \node[above] at (0.75,0.25) {$L$}; 
\end{tikzpicture}\right)  \\
\end{cases}.
\]
Using the above equations, a little algebra shows
\[
\begin{aligned}
&\#(\text{left turns where walk switches color}) - \#(\text{right turns where walk switches color}) \\
&=  L_1+2L_2 - R_1 -2R_2 \\
&=  1
\end{aligned}
\]
as desired. If the walk instead ends to the left of where it starts we would have
\[
\#(\text{left turns}) - \#(\text{right turns}) = -2
\]
but otherwise the same equations. Computing gives
\[
\begin{aligned}
\#(\text{left turns where walk switches color}) - \#(\text{right turns where walk switches color}) = -1.
\end{aligned}
\]

\noindent Here we worked out the case where the walk starts and ends on the top boundary. A similar analysis works for the other walks.
\end{proof}

For each type of arc between the labeled beads define the weight of the arc by
\begin{equation}\label{eq:arcweight}
\begin{tabular}{ccccccc}
$a$: &
\begin{tikzpicture}[baseline=(current bounding box.center)] 
\draw[thick] (0,2)--(1,0);
\draw[blue, fill=blue] (0,2) circle (3pt); \node[above] at (0,2) {$i$};
\draw[blue, fill=blue] (1,0) circle (3pt); \node[below] at (1,0) {$j$};
\end{tikzpicture}
&
\begin{tikzpicture}[baseline=(current bounding box.center)] 
\draw[thick] (0,2)--(1,0);
\draw[red, fill=red] (0,2) circle (3pt); \node[above] at (0,2) {$j$};
\draw[red, fill=red] (1,0) circle (3pt); \node[below] at (1,0) {$i$};
\end{tikzpicture}
&
\begin{tikzpicture}[baseline=(current bounding box.center)] 
\draw[thick] (0,0) arc(180:360:0.5);
\draw[red, fill=red] (0,0) circle (3pt); \draw[blue, fill=blue] (1,0) circle (3pt);
\node[above] at (0,0) {$j$}; \node[above] at (1,0) {$i$}; 
\end{tikzpicture}
&
\begin{tikzpicture}[baseline=(current bounding box.center)] 
\draw[thick] (0,0) arc(180:360:0.5);
\draw[blue, fill=blue] (0,0) circle (3pt); \draw[red, fill=red] (1,0) circle (3pt);
\node[above] at (0,0) {$i$}; \node[above] at (1,0) {$j$}; 
\end{tikzpicture}
&
\begin{tikzpicture}[baseline=(current bounding box.center)] 
\draw[thick] (1,0) arc(0:180:0.5);
\draw[red, fill=red] (0,0) circle (3pt); \draw[blue, fill=blue] (1,0) circle (3pt);
\node[below] at (0,0) {$i$}; \node[below] at (1,0) {$j$}; 
\end{tikzpicture}
&
\begin{tikzpicture}[baseline=(current bounding box.center)] 
\draw[thick] (1,0) arc(0:180:0.5);
\draw[blue, fill=blue] (0,0) circle (3pt); \draw[red, fill=red] (1,0) circle (3pt);
\node[below] at (0,0) {$j$}; \node[below] at (1,0) {$i$}; 
\end{tikzpicture}
\\
$w(a)$: & $t^{(j-i)/2}$ & $t^{(j-i)/2}$ & $t^{(j-i+1)/2}$ & $t^{(j-i-1)/2}$ & $t^{(j-i+1)/2}$ & $t^{(j-i-1)/2}$
\end{tabular}.
\end{equation}
\begin{prop}\label{prop:tpower}
For a single walk, the change in the power of $t$ after swapping the colors of all the path segments traversed in the walk depends only on its corresponding arc in the matching. In particular, the change in the power of $t$ is equal to the weight of the arc.
\end{prop}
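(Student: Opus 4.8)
The plan is to compute the change in the power of $t$ one face at a time and show that, after the dust settles, the three counting Lemmas \ref{lem:dircount}, \ref{lem:pathcount}, and \ref{lem:cornercount} force it to depend only on the two bead labels at the ends of the walk and on the arc type. The first thing I would record is what the $t$-part of the face weight \eqref{FaceWeight} is in our two-color setting. Writing blue for color $1$ and red for color $2$, a face carries a factor of $t$ exactly when a blue path exits to the right while a red path is present at that face (a red path exiting right contributes nothing, since no color is larger than red), and it carries $t^0$ otherwise. Because $\Phi$ only recolors edges lying on the walk, every face not met by the walk keeps its weight; and because the $x$-exponent of \eqref{FaceWeight} counts rightward steps with no regard for color, it is untouched by the recoloring. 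Hence the whole change in the power of $t$ is a sum of local contributions over the faces the walk visits.

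Next I would show that only the color-switching faces — the four local pictures of \eqref{eq:rules} — can contribute. Along a maximal monochromatic segment of the walk, every face strictly between two switches contains no path of the opposite color; this is precisely why the walk did not switch there. Recoloring such a face therefore changes a lone blue-right step into a lone red-right step (or leaves an up/left step alone), and in every case the $t$-exponent is $0$ both before and after, so the local change is $0$. The starting and ending faces lie on monochromatic segments meeting a singleton boundary bead, so they fall under the same argument.

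The heart of the matter is the local analysis at the four switch faces. At each such face the red and blue strands cross transversally, and swapping the two traversed half-edges replaces the crossing by a pair of right-angle turns. Running this through the pictures of \eqref{eq:rules}: the first type (enter on red from the top, leave on blue to the right) destroys a blue-right/red-present face and changes the power of $t$ by $-1$; the second type (enter on red from the right, leave on blue upward) creates one and changes it by $+1$; the third and fourth types leave the collection of blue-right/red-present faces unchanged and contribute $0$. Thus the total change equals $\#(\text{second type}) - \#(\text{first type})$ in the notation of \eqref{eq:rules}.

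Finally I would feed this into the counting lemmas. Adding Lemma \ref{lem:pathcount} to Lemma \ref{lem:cornercount} (the cross terms in the four face-type counts cancel) yields
\[
\#(\text{second type}) - \#(\text{first type}) = -\tfrac{1}{2}\big((j-i) + \varepsilon\big),
\]
where $j,i$ are the labels of the starting and ending beads and $\varepsilon\in\{-1,0,+1\}$ is the right-hand side supplied by Lemma \ref{lem:cornercount}, which is $0$ for a walk crossing between the two boundaries (consistent with the second case of Lemma \ref{lem:dircount}) and $\pm1$ for a walk returning to its boundary, according to whether it ends to the left or the right of its start. Running through the four endpoint possibilities — red-top to red-bottom, blue-bottom to blue-top, red and blue on the top row, red and blue on the bottom row — and reading off $\varepsilon$ in each, one checks that this exponent matches precisely the one attached to the corresponding arc in \eqref{eq:arcweight}; equivalently, for the single walk carrying arc $a$ one gets $\mathrm{weight}_{\bg}(L) = w(a)\,\mathrm{weight}_{(\bg)_{swap}}(\Phi L)$. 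The main obstacle is the local step of the third paragraph: justifying that each switch face really is a transverse crossing that becomes a bounce under the recoloring, and verifying that the third and fourth types are weight-neutral while the first two are not. Once this is established, the passage through the three lemmas and the case check against \eqref{eq:arcweight} is routine, the only delicate point being the correct sign of $\varepsilon$ in each endpoint configuration.
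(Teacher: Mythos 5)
Your proof is correct and follows essentially the same route as the paper: reduce the change in the power of $t$ to a count of the color-switching faces (only the first two types of (\ref{eq:rules}) contribute, with opposite signs), then add Lemmas \ref{lem:pathcount} and \ref{lem:cornercount} to evaluate $\#(\text{type 1})-\#(\text{type 2})=\tfrac{1}{2}\bigl((j-i)+\varepsilon\bigr)$ and match the result case by case against (\ref{eq:arcweight}). Your second and third paragraphs supply the local face-by-face justification (non-switch faces and the third and fourth switch types are weight-neutral, the first two change the exponent by $\mp 1$) that the paper asserts without proof; your sign convention is the reverse of the paper's, but the multiplicative identity $\mathrm{weight}_{\bg}(L)=w(a)\,\mathrm{weight}_{(\bg)_{swap}}(\Phi L)$ you arrive at is exactly what Theorem \ref{thm:swap} requires.
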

\begin{proof}
The change in the power of $t$ after swapping the colors of all the path segments of a walk is given by
\[
\#\left(\begin{tikzpicture}[baseline=(current bounding box.center)] 
\draw (0,0)--(1,0)--(1,1)--(0,1)--(0,0);
\draw[red] (0.6,0.4)--(0.6,1);
\draw[blue] (0.4,0.6)--(1,0.6);
\node[left] at (0.6,0.8) {$\downarrow$};
\node[below] at (0.8,0.6) {$\rightarrow$};
\end{tikzpicture}\right) 
-
\#\left( \begin{tikzpicture}[baseline=(current bounding box.center)] 
\draw (0,0)--(1,0)--(1,1)--(0,1)--(0,0);
\draw[blue] (0.4,0.6)--(0.4,1);
\draw[red] (0.6,0.4)--(1,0.4);
\node[left] at (0.4,0.8) {$\uparrow$};
\node[below] at (0.8,0.4) {$\leftarrow$};
\end{tikzpicture}\right).
\]
This can be computed using the previous lemmas. For example, suppose a walk starts on the top boundary and ends on the top boundary to the right of where it starts. From Lemmas \ref{lem:pathcount} and \ref{lem:cornercount} we have
\[
\begin{aligned}
\#\left(\begin{tikzpicture}[baseline=(current bounding box.center)] 
\draw (0,0)--(1,0)--(1,1)--(0,1)--(0,0);
\draw[red] (0.6,0.4)--(0.6,1);
\draw[blue] (0.4,0.6)--(1,0.6);
\node[left] at (0.6,0.8) {$\downarrow$};
\node[below] at (0.8,0.6) {$\rightarrow$};
\end{tikzpicture}\right) 
-
\#\left( \begin{tikzpicture}[baseline=(current bounding box.center)] 
\draw (0,0)--(1,0)--(1,1)--(0,1)--(0,0);
\draw[blue] (0.4,0.6)--(0.4,1);
\draw[red] (0.6,0.4)--(1,0.4);
\node[left] at (0.4,0.8) {$\uparrow$};
\node[below] at (0.8,0.4) {$\leftarrow$};
\end{tikzpicture}\right)
+
\#\left( \begin{tikzpicture}[baseline=(current bounding box.center)] 
\draw (0,0)--(1,0)--(1,1)--(0,1)--(0,0);
\draw[blue] (0,0.6)--(0.4,0.6);
\draw[red] (0.6,0)--(0.6,0.4);
\node[above] at (0.2,0.6) {$\rightarrow$};
\node[right] at (0.6,0.2) {$\downarrow$};
\end{tikzpicture}\right)
-
\#\left(\begin{tikzpicture}[baseline=(current bounding box.center)] 
\draw (0,0)--(1,0)--(1,1)--(0,1)--(0,0);
\draw[red] (0,0.4)--(0.6,0.4);
\draw[blue] (0.4,0)--(0.4,0.6);
\node[above] at (0.2,0.4) {$\leftarrow$};
\node[right] at (0.4,0.2) {$\uparrow$};
\end{tikzpicture} \right) & = j-i, \\
\#\left(\begin{tikzpicture}[baseline=(current bounding box.center)] 
\draw (0,0)--(1,0)--(1,1)--(0,1)--(0,0);
\draw[red] (0.6,0.4)--(0.6,1);
\draw[blue] (0.4,0.6)--(1,0.6);
\node[left] at (0.6,0.8) {$\downarrow$};
\node[below] at (0.8,0.6) {$\rightarrow$};
\end{tikzpicture}\right) 
-
\#\left( \begin{tikzpicture}[baseline=(current bounding box.center)] 
\draw (0,0)--(1,0)--(1,1)--(0,1)--(0,0);
\draw[blue] (0.4,0.6)--(0.4,1);
\draw[red] (0.6,0.4)--(1,0.4);
\node[left] at (0.4,0.8) {$\uparrow$};
\node[below] at (0.8,0.4) {$\leftarrow$};
\end{tikzpicture}\right)
- 
\#\left( \begin{tikzpicture}[baseline=(current bounding box.center)] 
\draw (0,0)--(1,0)--(1,1)--(0,1)--(0,0);
\draw[blue] (0,0.6)--(0.4,0.6);
\draw[red] (0.6,0)--(0.6,0.4);
\node[above] at (0.2,0.6) {$\rightarrow$};
\node[right] at (0.6,0.2) {$\downarrow$};
\end{tikzpicture}\right)
+
\#\left(\begin{tikzpicture}[baseline=(current bounding box.center)] 
\draw (0,0)--(1,0)--(1,1)--(0,1)--(0,0);
\draw[red] (0,0.4)--(0.6,0.4);
\draw[blue] (0.4,0)--(0.4,0.6);
\node[above] at (0.2,0.4) {$\leftarrow$};
\node[right] at (0.4,0.2) {$\uparrow$};
\end{tikzpicture} \right) & = 1.
\end{aligned}
\]
Adding them together gives 
\[
\#\left(\begin{tikzpicture}[baseline=(current bounding box.center)] 
\draw (0,0)--(1,0)--(1,1)--(0,1)--(0,0);
\draw[red] (0.6,0.4)--(0.6,1);
\draw[blue] (0.4,0.6)--(1,0.6);
\node[left] at (0.6,0.8) {$\downarrow$};
\node[below] at (0.8,0.6) {$\rightarrow$};
\end{tikzpicture}\right) 
-
\#\left( \begin{tikzpicture}[baseline=(current bounding box.center)] 
\draw (0,0)--(1,0)--(1,1)--(0,1)--(0,0);
\draw[blue] (0.4,0.6)--(0.4,1);
\draw[red] (0.6,0.4)--(1,0.4);
\node[left] at (0.4,0.8) {$\uparrow$};
\node[below] at (0.8,0.4) {$\leftarrow$};
\end{tikzpicture}\right) = (j-i+1)/2
\]
which agrees with
\[
w\left( 
\begin{tikzpicture}[baseline=(current bounding box.center)] 
\draw[thick] (0,0) arc(180:360:0.5);
\draw[red, fill=red] (0,0) circle (3pt); \draw[blue, fill=blue] (1,0) circle (3pt);
\node[above] at (0,0) {$j$}; \node[above] at (1,0) {$i$}; 
\end{tikzpicture} 
\right) = t^{(j-i+1)/2}.
\]
Note that this implies that $j-i+1$ is even.  Alternatively, we can see this since on the boundary between the starting and ending path segments there must be an even number of paths, either pairs of singleton boundary paths that are connected by a walk or non-singleton boundary paths. \\

For a walk starting at a red path on the top boundary and ending at a red path on the bottom boundary, Lemmas \ref{lem:pathcount} and \ref{lem:cornercount} give
\[
\begin{aligned}
\#\left(\begin{tikzpicture}[baseline=(current bounding box.center)] 
\draw (0,0)--(1,0)--(1,1)--(0,1)--(0,0);
\draw[red] (0.6,0.4)--(0.6,1);
\draw[blue] (0.4,0.6)--(1,0.6);
\node[left] at (0.6,0.8) {$\downarrow$};
\node[below] at (0.8,0.6) {$\rightarrow$};
\end{tikzpicture}\right) 
-
\#\left( \begin{tikzpicture}[baseline=(current bounding box.center)] 
\draw (0,0)--(1,0)--(1,1)--(0,1)--(0,0);
\draw[blue] (0.4,0.6)--(0.4,1);
\draw[red] (0.6,0.4)--(1,0.4);
\node[left] at (0.4,0.8) {$\uparrow$};
\node[below] at (0.8,0.4) {$\leftarrow$};
\end{tikzpicture}\right)
+
\#\left( \begin{tikzpicture}[baseline=(current bounding box.center)] 
\draw (0,0)--(1,0)--(1,1)--(0,1)--(0,0);
\draw[blue] (0,0.6)--(0.4,0.6);
\draw[red] (0.6,0)--(0.6,0.4);
\node[above] at (0.2,0.6) {$\rightarrow$};
\node[right] at (0.6,0.2) {$\downarrow$};
\end{tikzpicture}\right)
-
\#\left(\begin{tikzpicture}[baseline=(current bounding box.center)] 
\draw (0,0)--(1,0)--(1,1)--(0,1)--(0,0);
\draw[red] (0,0.4)--(0.6,0.4);
\draw[blue] (0.4,0)--(0.4,0.6);
\node[above] at (0.2,0.4) {$\leftarrow$};
\node[right] at (0.4,0.2) {$\uparrow$};
\end{tikzpicture} \right) & = j-i, \\
\#\left(\begin{tikzpicture}[baseline=(current bounding box.center)] 
\draw (0,0)--(1,0)--(1,1)--(0,1)--(0,0);
\draw[red] (0.6,0.4)--(0.6,1);
\draw[blue] (0.4,0.6)--(1,0.6);
\node[left] at (0.6,0.8) {$\downarrow$};
\node[below] at (0.8,0.6) {$\rightarrow$};
\end{tikzpicture}\right) 
-
\#\left( \begin{tikzpicture}[baseline=(current bounding box.center)] 
\draw (0,0)--(1,0)--(1,1)--(0,1)--(0,0);
\draw[blue] (0.4,0.6)--(0.4,1);
\draw[red] (0.6,0.4)--(1,0.4);
\node[left] at (0.4,0.8) {$\uparrow$};
\node[below] at (0.8,0.4) {$\leftarrow$};
\end{tikzpicture}\right)
- 
\#\left( \begin{tikzpicture}[baseline=(current bounding box.center)] 
\draw (0,0)--(1,0)--(1,1)--(0,1)--(0,0);
\draw[blue] (0,0.6)--(0.4,0.6);
\draw[red] (0.6,0)--(0.6,0.4);
\node[above] at (0.2,0.6) {$\rightarrow$};
\node[right] at (0.6,0.2) {$\downarrow$};
\end{tikzpicture}\right)
+
\#\left(\begin{tikzpicture}[baseline=(current bounding box.center)] 
\draw (0,0)--(1,0)--(1,1)--(0,1)--(0,0);
\draw[red] (0,0.4)--(0.6,0.4);
\draw[blue] (0.4,0)--(0.4,0.6);
\node[above] at (0.2,0.4) {$\leftarrow$};
\node[right] at (0.4,0.2) {$\uparrow$};
\end{tikzpicture} \right) & = 0
\end{aligned}
\]
from which we have
\[
\#\left(\begin{tikzpicture}[baseline=(current bounding box.center)] 
\draw (0,0)--(1,0)--(1,1)--(0,1)--(0,0);
\draw[red] (0.6,0.4)--(0.6,1);
\draw[blue] (0.4,0.6)--(1,0.6);
\node[left] at (0.6,0.8) {$\downarrow$};
\node[below] at (0.8,0.6) {$\rightarrow$};
\end{tikzpicture}\right) 
-
\#\left( \begin{tikzpicture}[baseline=(current bounding box.center)] 
\draw (0,0)--(1,0)--(1,1)--(0,1)--(0,0);
\draw[blue] (0.4,0.6)--(0.4,1);
\draw[red] (0.6,0.4)--(1,0.4);
\node[left] at (0.4,0.8) {$\uparrow$};
\node[below] at (0.8,0.4) {$\leftarrow$};
\end{tikzpicture}\right) = (j-i)/2
\]
which agrees with
\[
w\left( 
\begin{tikzpicture}[baseline=(current bounding box.center)] 
\draw[thick] (0.5,1)--(1,0);
\draw[red, fill=red] (0.5,1) circle (3pt); \node[above] at (0.5,1) {$j$};
\draw[red, fill=red] (1,0) circle (3pt); \node[below] at (1,0) {$i$};
\end{tikzpicture} 
\right) = t^{(j-i)/2}.
\]
Note that in this case we see that $j-i$ is even.

The same analysis can be done for walks starting on blue paths.
\end{proof}

Proposition \ref{prop:tpower} gives a sufficient condition for when the LLT polynomials $\mathcal{L}_{\bg}(X_n;t)$ and $\mathcal{L}_{(\bg)_{swap}}(X_n;t)$ are equivalent.
\begin{thm}\label{thm:swap}
If there is a unique non-crossing matching $M$ of the sequence of beads associated to $\bg$, then $\mathcal{L}_{\bg}(X_n;t)$ and $\mathcal{L}_{(\bg)_{swap}}(X_n;t)$ are equivalent. In particular, 
\[
\mathcal{L}_{\bg}(X_n;t) = \left(\prod_{a\in M} w(a)\right) \mathcal{L}_{(\bg)_{swap}}(X_n;t)
\]
where the product is over all arcs $a$ in the matching and the weight of an arc is given by (\ref{eq:arcweight}).
\end{thm}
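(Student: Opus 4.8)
The plan is to evaluate $\mathcal{L}_{\bg}(X_n;t) = \mathcal{Z}_{\bg}(X_n;t) = \sum_{L \in LC_{\bg}} \mathrm{weight}(L)$ (Theorem~\ref{LatticeModel}) by pushing the sum through the bijection $\Phi$ and comparing $\mathrm{weight}(L)$ with $\mathrm{weight}(\Phi(L))$ term by term. Writing $\mathrm{weight}(L) = x^{L}\, t^{p(L)}$ as a product of an $x$-monomial and a power of $t$, I would first argue that $\Phi$ preserves the $x$-monomial. Indeed, $\Phi$ only \emph{recolors} the edges lying on the traversed walks; it never changes which edges are occupied. Hence at every face the total number of colors exiting to the right is unchanged, and since the $x$-part of the face weight in (\ref{FaceWeight}) depends only on that number and not on the colors, we get $x^{\Phi(L)} = x^{L}$. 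Thus the entire discrepancy between $\mathrm{weight}(L)$ and $\mathrm{weight}(\Phi(L))$ is a power of $t$.

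Next I would control that power of $t$. The map $\Phi$ decomposes into the color-swaps along the individual walks of $L$, which by the lemma that distinct walks cannot cross are edge-disjoint. Applying Proposition~\ref{prop:tpower} to each walk and summing its contribution over all walks, the result is that
\[
\mathrm{weight}(L) = \Big(\prod_{a \in M_L} w(a)\Big)\,\mathrm{weight}(\Phi(L)),
\]
where $M_L$ is the non-crossing matching of the bead sequence induced by the walks of $L$ and $w(a)$ is the arc weight of (\ref{eq:arcweight}).

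The crux is then to remove the dependence on $L$. As noted in this section, every configuration in $LC_{\bg}$ induces \emph{some} non-crossing matching of the bead sequence. By hypothesis the bead sequence admits a \emph{unique} non-crossing matching $M$, so necessarily $M_L = M$ for every $L \in LC_{\bg}$. Consequently the factor $\prod_{a \in M} w(a)$ is constant across configurations and can be pulled out of the sum:
\[
\mathcal{L}_{\bg}(X_n;t) = \sum_{L \in LC_{\bg}} \Big(\prod_{a \in M} w(a)\Big)\,\mathrm{weight}(\Phi(L)) = \Big(\prod_{a \in M} w(a)\Big)\sum_{L' \in LC_{(\bg)_{swap}}} \mathrm{weight}(L'),
\]
where the last step uses that $\Phi$ is a bijection onto $LC_{(\bg)_{swap}}$. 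The final sum is $\mathcal{Z}_{(\bg)_{swap}}(X_n;t) = \mathcal{L}_{(\bg)_{swap}}(X_n;t)$, yielding the stated identity and, in particular, the equivalence.

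I expect the main obstacle to be the additivity invoked in the second paragraph: Proposition~\ref{prop:tpower} computes the change in the power of $t$ for a single walk swapped \emph{in isolation}, whereas $\Phi$ swaps all walks simultaneously. I would therefore need to check that the per-walk contributions do not interfere — that even where two walks merely touch at a corner, no face receives color changes from both walks in a way that alters its $t$-exponent beyond the sum of the separate effects — so that the total change is genuinely $\sum_{a\in M_L}$ of the arc exponents. Once this independence is secured, the uniqueness hypothesis makes the remainder of the argument essentially formal.
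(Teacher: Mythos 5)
Your proposal is correct and follows essentially the same route as the paper: apply $\Phi$, observe that the change in weight of each configuration is governed by its induced non-crossing matching via Proposition~\ref{prop:tpower}, and use the uniqueness hypothesis to make that change uniform so the factor $\prod_{a\in M}w(a)$ pulls out of the partition function. The two points you single out for extra care (preservation of the $x$-monomial, and additivity of the per-walk $t$-contributions, which holds because any face containing both colors and lying on a walk is a switching face of exactly that one walk, so distinct walks affect disjoint sets of such faces) are left implicit in the paper's own, terser proof.
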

\begin{proof}
We know that the algorithm from Section \ref{sect:procedure} associates to every configuration of the vertex model a non-crossing matching which determines the change in weight of the configuration under the bijection $\Phi$. If there is a unique non-crossing matching then each configuration is associated to the same matching and the change in weight is the same for all configurations. Thus, the bijection is weight-preserving up to an overall power of $t$.
\end{proof}

In our running example, we have the tuple of partitions $\bg= ((8,7,6),(4,3,2)/(2,0,0))$ for which there is a unique non-crossing matching for the sequence of beads associated to $\bg$. It is given by 
\[
\begin{tikzpicture}[baseline=(current bounding box.center)] 
\draw[dashed] (-0.2,2.2)--(3.2,2.2); \draw[dashed] (-0.2,0)--(3.2,0);
\draw[red, fill=red] (0,2.2) circle (3pt);\draw[red, fill=red] (1,2.2) circle (3pt); \draw[blue, fill=blue] (2,2.2) circle (3pt); \draw[blue, fill=blue] (3,2.2) circle (3pt);
\draw[blue, fill=blue] (1,0) circle (3pt); \draw[red, fill=red] (2,0) circle (3pt);
\node[above] at (0,2.2) {5}; \node[above] at (1,2.2) {4}; \node[above] at (2,2.2) {1}; \node[above] at (3,2.2) {0}; 
\node[below] at (1,0) {1}; \node[below] at (2,0) {0};
\draw[thick] (0,2.2) arc(180:360:1.5); \draw[thick] (1,2.2) arc(180:360:0.5);
\draw[thick] (2,0) arc(0:180:0.5);
\end{tikzpicture}.
\]
Using Theorem \ref{thm:swap} we have
\[
\mathcal{L}_{\bg}(X_n;t) = t^5 \mathcal{L}_{(\bg)_{swap}}(X_n;t)
\]
where $(\bg)_{swap}= ((4,3,2)/(2,0,0), (8,7,6))$.

From this general theorem, we can make a statement about specific families of partitions. For example, as a corollary to Theorem \ref{thm:swap}, we can show that LLT polynomials indexed by nested rectangular partitions are equivalent.
\begin{cor}
Fix $n>0$. Let $\lambda^{(1)} = (\underbrace{m_1,\ldots,m_1}_{k_1},\underbrace{0,\ldots,0}_{n-k_1})$ and $\lambda^{(2)}=(\underbrace{m_2,\ldots,m_2}_{k_2},\underbrace{0,\ldots,0}_{n-k_2})$, with $m_1\ge m_2 \ge 0$ and $k_1\ge k_2\ge 0$.  That is,  rectangular partitions such that $\lambda^{(2)}\subseteq \lambda^{(1)}$. Then $\mathcal{L}_{(\lambda^{(1)},\lambda^{(2)})}(X_n;t)$ and $\mathcal{L}_{(\lambda^{(2)},\lambda^{(1)})}(X_n;t)$ are equivalent.
\end{cor}
\begin{proof}
    There are three cases to consider:
    \begin{enumerate}
        \item $m_1 \ge k_1-k_2$, $m_2 \le m_1-k_1$
        \item $m_1 \ge k_1-k_2$, $m_2 > m_1-k_1$
        \item $m_1 < k_1-k_2$, $m_2 > m_1-k_1$
    \end{enumerate}
     In each case, one can check that there is a unique non crossing matching of the form
    \[
    \resizebox{0.3\textwidth}{!}{
\begin{tikzpicture}[baseline=(current bounding box.center)] 
\draw[dashed] (-0.2,0)--(5.2,0); 
\draw[red, fill=red] (0,0) circle (3pt); \draw[red, fill=red] (1,0) circle (3pt); \draw[red, fill=red] (2,0) circle (3pt); \draw[blue, fill=blue] (3,0) circle (3pt); \draw[blue, fill=blue] (4,0) circle (3pt); \draw[blue, fill=blue] (5,0) circle (3pt);
\draw[thick] (0,0) arc(180:360:2.5); \draw[thick] (1,0) arc(180:360:1.5); \draw[thick] (2,0) arc(180:360:0.5);
\end{tikzpicture}
}
    \]
    where the number of beads and the labeling depends on the case. 
\end{proof}

\begin{remark}
Suppose 
\[
\bg = (\ldots, \beta^{(i)}/\gamma^{(i)},\beta^{(i+1)}/\gamma^{(i+1)},\ldots) \qquad \text{and} \qquad (\bg)_{swap} = (\ldots, \beta^{(i+1)}/\gamma^{(i+1)},\beta^{(i)}/\gamma^{(i)},\ldots)
\]
are two tuples of partitions which are the same except for having their $i^{th}$ and $(i+1)^{st}$ partitions swapped. Then, if there is a unique non-crossing matching $M$ of the sequence of beads associated to the tuple of partitions $(\bg)_{i,i+1} = (\beta^{(i)}/\gamma^{(i)},\beta^{(i+1)}/\gamma^{(i+1)})$, it still holds that
\[
\mathcal{L}_{\bg}(X_n;t) = \left(\prod_{a\in M} w(a)\right) \mathcal{L}_{(\bg)_{swap}}(X_n;t)
\]
where the product is over all arcs $a$ in the matching and the weight of an arc is given by (\ref{eq:arcweight}).
\end{remark}

In Appendix A, we classify all sequences of beads which have unique non-crossing matchings. These results are summarized in Proposition \ref{prop:beads1} and Proposition \ref{prop:beads2}.
\begin{prop}\label{prop:beads1}
For a single row, sequences of beads that have a unique non-crossing matching are of the form
\[
\underbrace{
\begin{tikzpicture}[baseline=(current bounding box.center)] 
\draw[dashed] (-0.2,0)--(2.2,0);
\draw[blue, fill=blue] (0,0) circle (3pt); \node[below] at (1,0) {$\ldots$}; \draw[blue, fill=blue] (2,0) circle (3pt);
\end{tikzpicture}
}_{p}
\underbrace{
\begin{tikzpicture}[baseline=(current bounding box.center)] 
\draw[dashed] (-0.2,0)--(2.2,0);
\draw[red, fill=red] (0,0) circle (3pt); \node[below] at (1,0) {$\ldots$}; \draw[red, fill=red] (2,0) circle (3pt);
\end{tikzpicture}
}_{q}
\underbrace{
\begin{tikzpicture}[baseline=(current bounding box.center)] 
\draw[dashed] (-0.2,0)--(2.2,0);
\draw[blue, fill=blue] (0,0) circle (3pt); \node[below] at (1,0) {$\ldots$}; \draw[blue, fill=blue] (2,0) circle (3pt);
\end{tikzpicture}
}_{r}
\]
with $p+r=q$, or the same configurations as above with red and blue swapped.
\end{prop}
\begin{prop}\label{prop:beads2}
With two rows, sequence of beads that have a unique non-crossing matching are given by
\[
\begin{aligned}
\begin{tikzpicture}[baseline=(current bounding box.center)] 
\draw[dashed] (-0.2,1)--(4.5,1);
\draw[dashed] (-0.2,0)--(4.5,0);
\draw[red, fill=red] (0,1) circle (3pt); \node[below] at (1,1) {$\ldots$}; \draw[red, fill=red] (2,1) circle (3pt);
\draw[blue, fill=blue] (2.3,1) circle (3pt); \node[below] at (3.3,1) {$\ldots$}; \draw[blue, fill=blue] (4.3,1) circle (3pt);
\draw[blue, fill=blue] (0.4,0) circle (3pt); \node[above] at (1.1,0) {$\ldots$}; \draw[blue, fill=blue] (2,0) circle (3pt);
\draw[red, fill=red] (2.3,0) circle (3pt); \node[above] at (3.2,0) {$\ldots$}; \draw[red, fill=red] (3.9,0) circle (3pt);
\end{tikzpicture}
\;\;\;
\text{ or }
\;\;\;
\begin{tikzpicture}[baseline=(current bounding box.center)] 
\draw[dashed] (-0.2,1)--(6.8,1);
\draw[dashed] (-0.2,0)--(6.8,0);
\draw[red, fill=red] (0,1) circle (3pt); \node[below] at (1,1) {$\ldots$}; \draw[red, fill=red] (2,1) circle (3pt);
\draw[blue, fill=blue] (2.3,1) circle (3pt); \node[below] at (3.3,1) {$\ldots$}; \draw[blue, fill=blue] (4.3,1) circle (3pt);
\draw[red, fill=red] (4.6,1) circle (3pt); \node[below] at (5.6,1) {$\ldots$}; \draw[red, fill=red] (6.6,1) circle (3pt);
\draw[blue, fill=blue] (2.7,0) circle (3pt); \node[above] at (3.3,0) {$\ldots$}; \draw[blue, fill=blue] (3.9,0) circle (3pt);
\end{tikzpicture}
\end{aligned}
\]
or the same sequences as above with the rows or colors swapped, where the difference between the number of red and blue beads in the top row is equal to the difference in the number of red and blue beads in the bottom row.
\end{prop}
\noindent We then pull back these conditions to conditions on the tuple $\bg$ for which Theorem \ref{thm:swap} holds.

While the procedure in Section \ref{sect:procedure} associates to every vertex model configuration a non-crossing matching of a sequence of labeled beads, it is not true in general that there is a vertex model configuration associated to every non-crossing matching. For example, let $\bg = ((5,4,4)/(2,2,0), (3,1,1))$. There are two non-crossing matchings for the sequence of beads associated to the tuple
\[
\begin{tikzpicture}[baseline=(current bounding box.center)] 
\draw[dashed] (-0.2,2.2)--(3.2,2.2); \draw[dashed] (-0.2,0)--(3.2,0);
\draw[red, fill=red] (0,0) circle (3pt);\draw[red, fill=red] (1,0) circle (3pt); \draw[blue, fill=blue] (2,0) circle (3pt); \draw[blue, fill=blue] (3,0) circle (3pt);
\draw[red, fill=red] (1,2.2) circle (3pt); \draw[blue, fill=blue] (2,2.2) circle (3pt);
\node[below] at (0,0) {3}; \node[below] at (1,0) {2}; \node[below] at (2,0) {1}; \node[below] at (3,0) {0}; 
\node[above] at (1,2.2) {5}; \node[above] at (2,2.2) {0};
\draw[thick] (3,0) arc(0:180:1.5); \draw[thick] (2,0) arc(0:180:0.5);
\draw[thick] (1,2.2) arc(180:360:0.5);
\end{tikzpicture}
\text{ and }
\begin{tikzpicture}[baseline=(current bounding box.center)] 
\draw[dashed] (-0.2,2.2)--(3.2,2.2); \draw[dashed] (-0.2,0)--(3.2,0);
\draw[red, fill=red] (0,0) circle (3pt);\draw[red, fill=red] (1,0) circle (3pt); \draw[blue, fill=blue] (2,0) circle (3pt); \draw[blue, fill=blue] (3,0) circle (3pt);
\draw[red, fill=red] (1,2.2) circle (3pt); \draw[blue, fill=blue] (2,2.2) circle (3pt);
\node[below] at (0,0) {3}; \node[below] at (1,0) {2}; \node[below] at (2,0) {1}; \node[below] at (3,0) {0}; 
\node[above] at (1,2.2) {5}; \node[above] at (2,2.2) {0};
\draw[thick] (2,0) arc(0:180:0.5);
\draw[thick] (0,0)--(1,2.2);
\draw[thick] (3,0)--(2,2.2);
\end{tikzpicture}.
\]
An example configuration which realizes the second matching is given by
\[
\resizebox{0.4\textwidth}{!}{ 
\begin{tikzpicture}[baseline=(current bounding box.center)] 
\draw[help lines] (0,0) grid (8,4);

\draw[red] (0.6,0)--(0.6,3.4)--(1.6,3.4)--(1.6,4);
\draw[red] (1.6,0)--(1.6,2.4)--(2.6,2.4)--(2.6,4);
\draw[red] (2.6,0)--(2.6,1.4)--(5.6,1.4)--(5.6,4);

\draw[blue] (0.4,0)--(0.4,2.6)--(4.4,2.6)--(4.4,4);
\draw[blue] (3.4,0)--(3.4,1.6)--(5.4,1.6)--(5.4,4);
\draw[blue] (4.4,0)--(4.4,0.6)--(7.4,0.6)--(7.4,4);

\end{tikzpicture}
}\]
but it is impossible to realize the first matching with paths. That is, the map from configurations of the vertex model to non-crossing matchings is not generally onto.

However, if $\bg = \bm{\beta}$ is a tuple of straight shapes, each with the same number of parts, then we can indeed construct a path configuration for the vertex model corresponding to every non-crossing matching (provided that the vertex model has sufficiently many rows). We leave the construction of such configurations to interested readers. In what follows, we will consider either this case or specific examples in which it is clear that all matchings can be obtained.

\section{Linear Relations between LLT polynomials}\label{sect:relations}
In this section we show how we can use the techniques we have developed to construct linear relations between different LLT polynomials.

Let us start with a small example to illustrate the general strategy. Consider the tuples of partitions $\bm{\lambda}_1 = ((3,3),(1))$, $\bm{\lambda}_2 = ((2,2),(3))$, and $\bm{\lambda}_3 = ((3,2),(2))$. The vertex model boundary condition and the non-crossing matchings associated to the tuples are given by
\begin{center}
\begin{tabular}{c|c|c|c}
& & $M_1:$ & $M_2:$
\\
$\bm{\lambda}_1 = ((3,3),(1)):$ &
\resizebox{0.3\textwidth}{!}{
\begin{tikzpicture}[baseline=(current bounding box.center)] 
\draw[help lines] (0,0) grid (5,1);
\draw[help lines] (0,2) grid (5,3);
\draw[help lines] (0,1)--(0,2); \draw[help lines] (5,1)--(5,2);
\draw[blue] (4.4,2.6)--(4.4,3); \draw[blue] (3.4,2.6)--(3.4,3); \draw[red] (2.6,2.4)--(2.6,3);
\draw[blue] (0.4,0)--(0.4,0.6); \draw[blue] (1.4,0)--(1.4,0.6); \draw[red] (1.6,0)--(1.6,0.4);
\end{tikzpicture}
}
&
\begin{tikzpicture}[baseline=(current bounding box.center)] 
\draw[dashed] (-0.2,1)--(2.2,1);  \draw[dashed] (-0.2,0)--(2.2,0);
\draw[red, fill=red] (0,1) circle (3pt); \draw[blue, fill=blue] (1,1) circle (3pt); \draw[blue, fill=blue] (2,1) circle (3pt);
\draw[blue, fill=blue] (1,0) circle (3pt);
\draw[thick] (0,1) arc(180:360:0.5);
\draw[thick] (1,0)--(2,1);
\node[above] at (0,1) {$2$}; \node[above] at (1,1) {$1$}; \node[above] at (2,1) {$0$};
\node[below] at (1,0) {$2$};
\end{tikzpicture}
\vspace{0.2cm}
\\
$\bm{\lambda}_2 = ((2,2),(3)):$ &
\resizebox{0.3\textwidth}{!}{
\begin{tikzpicture}[baseline=(current bounding box.center)] 
\draw[help lines] (0,0) grid (5,1);
\draw[help lines] (0,2) grid (5,3);
\draw[help lines] (0,1)--(0,2); \draw[help lines] (5,1)--(5,2);
\draw[red] (4.6,2.4)--(4.6,3); \draw[blue] (3.4,2.6)--(3.4,3); \draw[blue] (2.4,2.6)--(2.4,3);
\draw[blue] (0.4,0)--(0.4,0.6); \draw[blue] (1.4,0)--(1.4,0.6); \draw[red] (1.6,0)--(1.6,0.4);
\end{tikzpicture}
}
&
\begin{tikzpicture}[baseline=(current bounding box.center)] 
\draw[dashed] (-0.2,1)--(2.2,1);  \draw[dashed] (-0.2,0)--(2.2,0);
\draw[blue, fill=blue] (0,1) circle (3pt); \draw[blue, fill=blue] (1,1) circle (3pt); \draw[red, fill=red] (2,1) circle (3pt);
\draw[blue, fill=blue] (1,0) circle (3pt);
\draw[thick] (1,1) arc(180:360:0.5);
\draw[thick] (1,0)--(0,1);
\node[above] at (0,1) {$2$}; \node[above] at (1,1) {$1$}; \node[above] at (2,1) {$0$};
\node[below] at (1,0) {$2$};
\end{tikzpicture}
\vspace{0.2cm}
\\
$\bm{\lambda}_3 = ((3,2),(2)):$ &
\resizebox{0.3\textwidth}{!}{
\begin{tikzpicture}[baseline=(current bounding box.center)] 
\draw[help lines] (0,0) grid (5,1);
\draw[help lines] (0,2) grid (5,3);
\draw[help lines] (0,1)--(0,2); \draw[help lines] (5,1)--(5,2);
\draw[blue] (4.4,2.6)--(4.4,3); \draw[red] (3.6,2.4)--(3.6,3); \draw[blue] (2.4,2.6)--(2.4,3);
\draw[blue] (0.4,0)--(0.4,0.6); \draw[blue] (1.4,0)--(1.4,0.6); \draw[red] (1.6,0)--(1.6,0.4);
\end{tikzpicture}
}
&
\begin{tikzpicture}[baseline=(current bounding box.center)] 
\draw[dashed] (-0.2,1)--(2.2,1);  \draw[dashed] (-0.2,0)--(2.2,0);
\draw[blue, fill=blue] (0,1) circle (3pt); \draw[red, fill=red] (1,1) circle (3pt); \draw[blue, fill=blue] (2,1) circle (3pt);
\draw[blue, fill=blue] (1,0) circle (3pt);
\draw[thick] (0,1) arc(180:360:0.5);
\draw[thick] (1,0)--(2,1);
\node[above] at (0,1) {$2$}; \node[above] at (1,1) {$1$}; \node[above] at (2,1) {$0$};
\node[below] at (1,0) {$2$};
\end{tikzpicture}
&
\begin{tikzpicture}[baseline=(current bounding box.center)] 
\draw[dashed] (-0.2,1)--(2.2,1);  \draw[dashed] (-0.2,0)--(2.2,0);
\draw[blue, fill=blue] (0,1) circle (3pt); \draw[red, fill=red] (1,1) circle (3pt); \draw[blue, fill=blue] (2,1) circle (3pt);
\draw[blue, fill=blue] (1,0) circle (3pt);
\draw[thick] (1,1) arc(180:360:0.5);
\draw[thick] (1,0)--(0,1);
\node[above] at (0,1) {$2$}; \node[above] at (1,1) {$1$}; \node[above] at (2,1) {$0$};
\node[below] at (1,0) {$2$};
\end{tikzpicture}
\end{tabular}
\end{center} 

As there is a unique non-crossing matching associated to $\bm{\lambda}_1$, Theorem \ref{thm:swap} allows us to conclude that $\mathcal{L}_{\bm{\lambda}_1}(X_n;t) = t^2 \; \mathcal{L}_{((1),(3,3))}(X_n;t)$. However, we can do more. Given a path configuration with boundary condition given by $\bm{\lambda}_1$, we can switch the color of all path segments in the walk starting from the red path on the top boundary. For example,
\[
\resizebox{0.3\textwidth}{!}{
\begin{tikzpicture}[baseline=(current bounding box.center)] 
\draw[help lines] (0,0) grid (5,3);
\draw[blue] (0.4,0)--(0.4,1.6)--(3.4,1.6)--(3.4,3); \draw[blue] (1.4,0)--(1.4,0.6)--(4.4,0.6)--(4.4,3); \draw[red] (1.6,0)--(1.6,2.4)--(2.6,2.4)--(2.6,3);
\draw[blue, very thick,->] (1.6,1.6)--(3.4,1.6)--(3.4,3); \draw[red, very thick, ->] (2.6,3)--(2.6,2.4)--(1.6,2.4)--(1.6,1.6);
\end{tikzpicture}
}
\mapsto
\resizebox{0.3\textwidth}{!}{
\begin{tikzpicture}[baseline=(current bounding box.center)] 
\draw[help lines] (0,0) grid (5,3);
\draw[blue] (0.4,0)--(0.4,1.6)--(1.4,1.6)--(1.4,2.6)--(2.4,2.6)--(2.4,3); \draw[blue] (1.4,0)--(1.4,0.6)--(4.4,0.6)--(4.4,3); \draw[red] (1.6,0)--(1.6,1.4)--(3.6,1.4)--(3.6,3);
\end{tikzpicture}.
}
\]
This results in a configuration with boundary condition given by $\bm{\lambda}_3$, in particular, a configuration which corresponds to matching $M_1$ of $\bm{\lambda}_3$. Doing this for every configuration gives a bijection
\[
\{\text{config. with boundary condition } \bm{\lambda}_1 \} \rightarrow \{\text{config. with boundary condition } \bm{\lambda}_3  \text{ corresponding to } M_1\}
\]
where under this mapping the change in the power of $t$ is given by 
\[
w\left( 
\begin{tikzpicture}[baseline=(current bounding box.center)] 
\draw[thick] (0,0) arc(180:360:0.5);
\draw[red, fill=red] (0,0) circle (3pt); \draw[blue, fill=blue] (1,0) circle (3pt);
\node[above] at (0,0) {$2$}; \node[above] at (1,0) {$1$}; 
\end{tikzpicture} 
\right) = t.
\]

Similarly, for any configuration in the vertex model for $\bm{\lambda}_2$, we can swap the color of all path segments along the walk starting at the red path on the top boundary. This gives a bijection
\[
\{\text{config. with boundary condition } \bm{\lambda}_2 \} \rightarrow \{\text{config. with boundary condition } \bm{\lambda}_3  \text{ corresponding to } M_2\}
\]
where under this mapping the change in the power of $t$ is given by 
\[
w\left( 
\begin{tikzpicture}[baseline=(current bounding box.center)] 
\draw[thick] (0,0) arc(180:360:0.5);
\draw[blue, fill=blue] (0,0) circle (3pt); \draw[red, fill=red] (1,0) circle (3pt);
\node[above] at (0,0) {$1$}; \node[above] at (1,0) {$0$}; 
\end{tikzpicture} 
\right) = t^{-1}.
\]
All together this shows that
\[
\mathcal{L}_{\bm{\lambda}_3}(X;t) = t^{-1} \; \mathcal{L}_{\bm{\lambda}_1}(X;t) + t \; \mathcal{L}_{\bm{\lambda}_2}(X;t).
\]

As one possible application of this type of calculation, we can reprove a relation between LLT polynomials indexed by single rows given in \cite{Tom}. Note that the precise powers of $t$ differ than that of \cite{Tom} as we are working with coinversion, rather than inversion, LLT polynomials. (Recall that the inversion and coinversion LLT polynomials themselves only differ by an overall power of $t$.)
\begin{lem}[Lemma 3.17, \cite{Tom}]
Let $\beta_1,\gamma_1,\beta_2,\gamma_2$ be positive integers such that $\gamma_1 < \gamma_2 \le \beta_1  < \beta_2$. Then 
\[
\mathcal{L}_{(\beta_1/\gamma_1, \beta_2/\gamma_2)}(X_n;t) = \mathcal{L}_{(\beta_2/\gamma_2, \beta_1/\gamma_1)}(X_n;t) + (t^{-1}-1) \mathcal{L}_{(\beta_2/\gamma_1, \beta_1/\gamma_2)}(X_n;t).
\]
\end{lem}
\begin{proof}
We begin by drawing the boundary condition and matchings corresponding to the tuple of partitions of each of these LLT polynomials:
\begin{center}
\resizebox{0.8\textwidth}{!}{
\begin{tabular}{c|c|c|c}
& & $M_1:$ & $M_2:$
\\
$(\bg)_1=(\beta_1/\gamma_1, \beta_2/\gamma_2):$ &
\resizebox{0.4\textwidth}{!}{
\begin{tikzpicture}[baseline=(current bounding box.center)] 
\draw[help lines] (0,0) grid (6,1);
\draw[help lines] (0,2) grid (6,3);
\draw[help lines] (0,1)--(0,2); \draw[help lines] (6,1)--(6,2);
\draw[blue] (3.4,2.6)--(3.4,3); \draw[red] (5.6,2.4)--(5.6,3);
\draw[blue] (0.4,0)--(0.4,0.6); \draw[red] (2.6,0)--(2.6,0.4);
\node[below] at (0.5,0) {$\gamma_1$}; \node[below] at (2.5,0) {$\gamma_2$};
\node[above] at (3.5,3) {$\beta_1$}; \node[above] at (5.5,3) {$\beta_2$};
\end{tikzpicture}
}
&
\begin{tikzpicture}[baseline=(current bounding box.center)] 
\draw[dashed] (-0.2,2)--(1.2,2);  \draw[dashed] (-0.2,0)--(1.2,0);
\draw[blue, fill=blue] (0,2) circle (3pt); \draw[red, fill=red] (1,2) circle (3pt); 
\draw[blue, fill=blue] (0,0) circle (3pt); \draw[red, fill=red] (1,0) circle (3pt);
\draw[thick] (0,2) arc(180:360:0.5);
\draw[thick] (1,0) arc(0:180:0.5);
\node[above] at (0,2) {$1$}; \node[above] at (1,2) {$0$}; 
\node[below] at (0,0) {$1$}; \node[below] at (1,0) {$0$};
\end{tikzpicture}
&
\begin{tikzpicture}[baseline=(current bounding box.center)] 
\draw[dashed] (-0.2,2)--(1.2,2);  \draw[dashed] (-0.2,0)--(1.2,0);
\draw[blue, fill=blue] (0,2) circle (3pt); \draw[red, fill=red] (1,2) circle (3pt); 
\draw[blue, fill=blue] (0,0) circle (3pt); \draw[red, fill=red] (1,0) circle (3pt);
\draw[thick] (0,0)--(0,2);
\draw[thick] (1,0)--(1,2);
\node[above] at (0,2) {$1$}; \node[above] at (1,2) {$0$}; 
\node[below] at (0,0) {$1$}; \node[below] at (1,0) {$0$};
\end{tikzpicture}
\vspace{0.2cm}
\\
$(\bg)_2=(\beta_2/\gamma_2, \beta_1/\gamma_1):$ &
\resizebox{0.4\textwidth}{!}{
\begin{tikzpicture}[baseline=(current bounding box.center)] 
\draw[help lines] (0,0) grid (6,1);
\draw[help lines] (0,2) grid (6,3);
\draw[help lines] (0,1)--(0,2); \draw[help lines] (6,1)--(6,2);
\draw[blue] (5.4,2.6)--(5.4,3); \draw[red] (3.6,2.4)--(3.6,3);
\draw[blue] (2.4,0)--(2.4,0.6); \draw[red] (0.6,0)--(0.6,0.4);
\node[below] at (0.5,0) {$\gamma_1$}; \node[below] at (2.5,0) {$\gamma_2$};
\node[above] at (3.5,3) {$\beta_1$}; \node[above] at (5.5,3) {$\beta_2$};
\end{tikzpicture}
}
&
\begin{tikzpicture}[baseline=(current bounding box.center)] 
\draw[dashed] (-0.2,2)--(1.2,2);  \draw[dashed] (-0.2,0)--(1.2,0);
\draw[red, fill=red] (0,2) circle (3pt); \draw[blue, fill=blue] (1,2) circle (3pt); 
\draw[red, fill=red] (0,0) circle (3pt); \draw[blue, fill=blue] (1,0) circle (3pt);
\draw[thick] (0,2) arc(180:360:0.5);
\draw[thick] (1,0) arc(0:180:0.5);
\node[above] at (0,2) {$1$}; \node[above] at (1,2) {$0$}; 
\node[below] at (0,0) {$1$}; \node[below] at (1,0) {$0$};
\end{tikzpicture}
&
\begin{tikzpicture}[baseline=(current bounding box.center)] 
\draw[dashed] (-0.2,2)--(1.2,2);  \draw[dashed] (-0.2,0)--(1.2,0);
\draw[red, fill=red] (0,2) circle (3pt); \draw[blue, fill=blue] (1,2) circle (3pt); 
\draw[red, fill=red] (0,0) circle (3pt); \draw[blue, fill=blue] (1,0) circle (3pt);
\draw[thick] (0,0)--(0,2);
\draw[thick] (1,0)--(1,2);
\node[above] at (0,2) {$1$}; \node[above] at (1,2) {$0$}; 
\node[below] at (0,0) {$1$}; \node[below] at (1,0) {$0$};
\end{tikzpicture}
\vspace{0.2cm}
\\
$(\bg)_3=(\beta_2/\gamma_1, \beta_1/\gamma_2):$ &
\resizebox{0.4\textwidth}{!}{
\begin{tikzpicture}[baseline=(current bounding box.center)] 
\draw[help lines] (0,0) grid (6,1);
\draw[help lines] (0,2) grid (6,3);
\draw[help lines] (0,1)--(0,2); \draw[help lines] (6,1)--(6,2);
\draw[blue] (5.4,2.6)--(5.4,3); \draw[red] (3.6,2.4)--(3.6,3);
\draw[blue] (0.4,0)--(0.4,0.6); \draw[red] (2.6,0)--(2.6,0.4);
\node[below] at (0.5,0) {$\gamma_1$}; \node[below] at (2.5,0) {$\gamma_2$};
\node[above] at (3.5,3) {$\beta_1$}; \node[above] at (5.5,3) {$\beta_2$};
\end{tikzpicture}
}
&
\begin{tikzpicture}[baseline=(current bounding box.center)] 
\draw[dashed] (-0.2,2)--(1.2,2);  \draw[dashed] (-0.2,0)--(1.2,0);
\draw[red, fill=red] (0,2) circle (3pt); \draw[blue, fill=blue] (1,2) circle (3pt); 
\draw[blue, fill=blue] (0,0) circle (3pt); \draw[red, fill=red] (1,0) circle (3pt);
\draw[thick] (0,2) arc(180:360:0.5);
\draw[thick] (1,0) arc(0:180:0.5);
\node[above] at (0,2) {$1$}; \node[above] at (1,2) {$0$}; 
\node[below] at (0,0) {$1$}; \node[below] at (1,0) {$0$};
\end{tikzpicture}
\end{tabular}
}
\end{center} 
Note that 
\begin{itemize}
\item The terms corresponding to $M_2$ in $(\bg)_1$ and $(\bg)_2$ are equal as we can swap the colors along the arcs connecting the two rows without changing the weight.
\item The terms corresponding to $M_1$ in $(\bg)_1$ are equal to those in $(\bg)_3$ up to a factor of $t^{-1}$ coming from swapping the top arc.
\item The terms corresponding to $M_1$ in $(\bg)_2$ are equal to those in $(\bg)_3$ since there is no cost to swapping the bottom arc. 
\end{itemize}
We can use $(\bg)_3$ to cancel the terms corresponding to $M_1$ in both $(\bg)_1$ and $(\bg)_2$, then use the fact that the terms corresponding to $M_2$ in both $(\bg)_1$ and $(\bg)_2$ are equal. Putting this together, we find
\[
\mathcal{L}_{(\beta_1/\gamma_1, \beta_2/\gamma_2)}(X_n;t) - t^{-1}\;\mathcal{L}_{(\beta_2/\gamma_1, \beta_1/\gamma_2)}(X_n;t) = \mathcal{L}_{(\beta_2/\gamma_2, \beta_1/\gamma_1)}(X_n;t) - \mathcal{L}_{(\beta_2/\gamma_1, \beta_1/\gamma_2)}(X_n;t)
\]
from which the result follows.
\end{proof}

For a more complicated example, consider $2n$ real numbers $\beta_1>\beta_2>\ldots>\beta_{2n}\ge0$. Consider the family of tuples of partitions of the form 
\begin{equation}\label{eq:partfam}
\bm{\beta} = ((\beta_{i_1}-n+1,\ldots,\beta_{i_n}-n+n),(\beta_{j_1}-n+1,\ldots,\beta_{j_n}-n+n))
\end{equation}
where $\beta_{i_1}>\ldots>\beta_{i_n}$, $\beta_{j_1}>\ldots>\beta_{j_n}$, and the $\beta_i$'s and $\beta_j$'s partition the set $\{\beta_1,\ldots,\beta_{2n}\}$. There are $\binom{2n}{n}$ tuples of partitions in this family. We will show that these LLT polynomials can all be written as sums of a smaller collection of $C_n = \frac{1}{n+1}\binom{2n}{n}$ symmetric polynomials associated to the possible non-crossing matchings.
\begin{thm}\label{thm:bigfam}
Consider the family of partitions given in (\ref{eq:partfam}). Then for every $\bm{\beta}$ in this family, the LLT polynomial $\mathcal{L}_{\bm{\beta}}(X_n;t)$ can be written 
\[
\mathcal{L}_{\bm{\beta}}(X_n;t) = \sum_{j=1}^{C_n} t^{n_{j}(\bm{\beta})} g_j(X_n;t)
\]
where $C_n = \frac{1}{n+1}\binom{2n}{n}$ is the $n^{th}$ Catalan number, $n_{j}(\bm{\beta})\in \mathbb{Z}$ for each $j$ and $\bm{\beta}$, and the $g_i$ are polynomials symmetric in $X_n$.
\end{thm}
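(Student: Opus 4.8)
The plan is to realize the whole family (\ref{eq:partfam}) as the $\binom{2n}{n}$ two-colorings of one fixed sequence of $2n$ beads, and then to use the walk/arc-swapping machinery of Sections~\ref{sect:procedure}--\ref{sect:weight} to expand each $\mathcal{L}_{\bm{\beta}}$ in terms of $C_n$ distinguished building blocks, one for each non-crossing matching shape. First I would compute the bead data. Since every $\bm{\beta}$ is a pair of straight shapes with $n$ parts each, the lower boundary beads coming from the empty $\bm{\gamma}$'s occur at the same positions in both colours and are therefore doubled (non-singleton), while the top boundary carries exactly $2n$ singleton beads, located at the distinct positions $\beta_1-n+1>\cdots>\beta_{2n}-n+1$ and coloured according to whether the index lies in $\{j_1,\dots,j_n\}$ or $\{i_1,\dots,i_n\}$. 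Thus $\bm{\beta}\mapsto(\text{its bead colouring})$ is a bijection from the family onto the $n$-red/$n$-blue colourings of a single row of $2n$ labelled beads, which gives the count $\binom{2n}{n}$. A matching here joins oppositely coloured beads of this one row, so forgetting colours there are exactly $C_n$ non-crossing perfect matchings $M_1,\dots,M_{C_n}$ of $2n$ points.

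Next, for each matching shape $M_j$ I would fix the canonical colouring obtained by colouring every opener red and every closer blue; this makes each arc bichromatic and uses $n$ beads of each colour, so it corresponds to a genuine member $\bm{\lambda}_j$ of the family. I then define $g_j(X_n;t)$ to be the part of $\mathcal{L}_{\bm{\lambda}_j}(X_n;t)$ coming from those configurations whose associated matching, under the algorithm of Section~\ref{sect:procedure}, is $M_j$. Because the $\bm{\lambda}_j$ are tuples of straight shapes with equal part-counts, every matching is realizable (as remarked at the end of Section~\ref{sect:weight}), so the $g_j$ are well defined and nonzero.

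The core step is a selective-swap bijection. Fix $\bm{\beta}$ and a matching $M_j$ compatible with its colouring, meaning every arc of $M_j$ joins a red and a blue bead of $\bm{\beta}$. Given any $\bm{\beta}$-configuration whose algorithm-matching is $M_j$, I swap colours along exactly those walks whose arcs are coloured oppositely by $\bm{\beta}$ and by $\bm{\lambda}_j$. Since distinct walks never cross (they meet at most at corners), this selective swap yields a valid configuration, leaves the geometric arc structure—and hence the matching $M_j$—unchanged, and converts the boundary colouring $\bm{\beta}$ into $\bm{\lambda}_j$. By Proposition~\ref{prop:tpower} each single arc-swap rescales the weight by the fixed ratio of the two arc weights, so the whole operation is a weight-preserving bijection up to one overall factor $t^{n_j(\bm{\beta})}$ that depends only on $M_j$ and the two colourings. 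Hence the $M_j$-sector of $\mathcal{L}_{\bm{\beta}}$ equals $t^{n_j(\bm{\beta})}g_j$, and summing over the matchings compatible with $\bm{\beta}$ (incompatible ones contribute no configurations) gives
\[
\mathcal{L}_{\bm{\beta}}(X_n;t)=\sum_{j=1}^{C_n} t^{n_j(\bm{\beta})} g_j(X_n;t),
\]
with the convention that the incompatible indices $j$ drop out.

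It remains to prove the $g_j$ are symmetric, and this is the step I expect to be the main obstacle. Applying the same selective-swap argument to the canonical tuples themselves yields $\mathcal{L}_{\bm{\lambda}_i}=\sum_j w_{ij}(t)\,g_j$, where $w_{ii}(t)=1$ and each $w_{ij}(t)$ is either $0$ or a power of $t$; it then suffices to show the matrix $\bm{M}(t)=(w_{ij})$ is invertible over $\mathbb{Z}[t,t^{-1}]$, for then $g_j$ is a $\mathbb{Z}[t,t^{-1}]$-combination of the symmetric polynomials $\mathcal{L}_{\bm{\lambda}_i}$ and is therefore symmetric. I would prove invertibility by exhibiting a total order making $\bm{M}(t)$ unitriangular, the natural candidate being to order the $M_j$ by decreasing total arc length $\sum_{(a,b)}|a-b|$. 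The key lemma to establish is that, for the canonical colouring of $M_i$, the matching $M_i$ is the \emph{unique} bichromatic non-crossing matching of minimum total length (equivalently, $M_i$ is the greedy nearest opener--closer matching of its own opener/closer word). Granting this, compatibility of $M_j$ with $\bm{\lambda}_i$ forces the total length of $M_j$ to strictly exceed that of $M_i$ whenever $j\neq i$, so $w_{ij}\neq 0\Rightarrow j\le i$ in this order and $\bm{M}(t)$ is lower unitriangular, hence invertible; the explicit matrix in the $C_5$ example has exactly this shape. The real content of the obstacle is this minimality lemma, which I would prove by a non-crossing exchange argument: any bichromatic matching that is not the greedy one contains an arc that can be replaced by strictly shorter arcs without introducing a crossing, so it is not of minimum length.
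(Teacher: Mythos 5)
Your argument is correct, and its overall architecture coincides with the paper's: one building block $g_j$ per non-crossing matching shape, a canonical ``openers red, closers blue'' colouring selecting a representative tuple $\bm{\lambda}_j$ in the family (the paper's inductive colouring is exactly this one), a selective walk-swap identifying the $M_j$-sector of $\mathcal{L}_{\bm{\beta}}$ with $t^{n_j(\bm{\beta})}g_j$, and symmetry of the $g_j$ via unitriangularity of the transition matrix $\bm{M}(t)$. The genuine difference is how unitriangularity is proved. The paper orders matchings lexicographically through an inductive decomposition $(a,b,c)$ at the arc incident to the rightmost bead, and excludes larger matchings with a prefix height function $h_n(k)=\#\text{blue}-\#\text{red}$; you instead order by total arc length and reduce everything to the lemma that the canonical colouring of $M_i$ admits $M_i$ as its \emph{unique} minimum-length bichromatic non-crossing matching. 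That lemma is true, and your exchange sketch (the step you rightly flag as the real content) can be replaced by a short counting argument: the total length equals $\sum_k c_k$, where $c_k$ is the number of arcs spanning the $k$-th gap; one has $c_k = k - 2m_k \ge |r_k - b_k|$ with $m_k$ the number of arcs inside the prefix, and equality for every $k$ forces each blue bead to be matched to a red bead on its left, a condition with a unique non-crossing realization for a Dyck-type word, namely $M_i$. Your route buys a non-inductive, geometrically transparent order (ties in length are harmless, since compatibility of $M_j$ with $\bm{\lambda}_i$ forces a strict length increase, so tied matchings are mutually incompatible); the paper's buys an explicit recursive indexing of the $C_n$ matchings. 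You also spell out the selective-swap bijection for an arbitrary member of the family, which the paper leaves implicit in its closing sentence.
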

\begin{proof}
The given family of partitions corresponds to sequence of beads with only one row with $n$ red and $n$ blue beads. For example, the tuple of partitions $\bm{\beta}=((\beta_1-n+1,\ldots,\beta_n-n+n),(\beta_{n+1}-n+1,\ldots,\beta_{2n}-n+n))$ corresponds to the LLT polynomial with the sequence of beads
\[
\underbrace{
\begin{tikzpicture}[baseline=(current bounding box.center)] 
\draw[dashed] (-0.2,0)--(2.2,0);
\draw[red, fill=red] (0,0) circle (3pt); \node[below] at (1,0) {$\ldots$}; \draw[red, fill=red] (2,0) circle (3pt);
\node[above] at (0,0) {$2n-1$}; \node[above] at (2,0) {$n$};
\end{tikzpicture}
}_{n}
\underbrace{
\begin{tikzpicture}[baseline=(current bounding box.center)] 
\draw[dashed] (-0.2,0)--(2.2,0);
\draw[blue, fill=blue] (0,0) circle (3pt); \node[below] at (1,0) {$\ldots$}; \draw[blue, fill=blue] (2,0) circle (3pt);
\node[above] at (0,0) {$n-1$}; \node[above] at (2,0) {$0$};
\end{tikzpicture}
}_{n}.
\]
Ignoring the colors of beads, there are $C_n = \frac{1}{n+1}\binom{2n}{n}$ possible non-crossing matchings of the $2n$ beads. Let us put some ordering on the matchings and label them $M_1,\ldots,M_{C_n}$.

 Suppose we pick one of the non-crossing matchings $M_i$. Let us choose one of the many ways to color the beads so that each arc in the matching connects a red bead to a blue bead. This selects one of the partitions from our family. Call this partition $\bm{\beta}_i$. Clearly, we have that $\mathcal{L}_{\bm{\beta}_i}(X_n;t)$ has $M_i$ as one of its associated matchings, although it may have others. We define $g_i(X_n;t)$ as the sum of the weights of the configuration in the vertex model for $\mathcal{L}_{\bm{\beta}_i}(X_n;t)$ which gives the matching $M_i$ under the algorithm in Section \ref{sect:procedure}. 
 
 For any $\bm{\beta}$ each configuration belongs to a unique matching, so we must have
\[
\mathcal{L}_{\bm{\beta}}(X_n;t) = \sum_{j=1}^{C_n} t^{n_{j}(\bm{\beta})} g_{j}(X_n;t)
\]
where $n_{j}(\bm{\beta})$ is the power of $t$ needed to account for the difference in the coloring of the beads between $\bm{\beta}$ and $\bm{\beta}_j$. While it is clear we can write the LLT polynomials as the sum of the $g_i$'s, it is not obvious that the $g_i$'s are symmetric.

To show they are indeed symmetric, let $\bm{M}(t)$ be the matrix which takes  $(g_1,\ldots,g_{C_n})^T$ to $(\mathcal{L}_{\bm{\beta}_1},\ldots,\mathcal{L}_{\bm{\beta}_{C_n}})^T$, that is,
\[
\begin{pmatrix}
    \mathcal{L}_{\bm{\beta}_1} \\ \vdots \\\mathcal{L}_{\bm{\beta}_{C_n}}
\end{pmatrix}
=
\bm{M}(t) 
\begin{pmatrix}
    g_1 \\ \vdots \\g_{C_n}
\end{pmatrix}
\]
where $\bm{M}(t)_{ij} = t^{n_j(\bm{\beta}_i)}$.

We will show that by appropriately choosing the ordering of the matchings and appropriately selecting the colorings, the matrix $\bm{M}(t)$ will be lower triangular with ones on the diagonal. In particular, $\bm{M}(t)$ is invertible and $\bm{M}^{-1}(t)$ describes a way to write each $g_{i}(X_n;t)$ as a linear combination of the $\mathcal{L}_{\bm{\beta}_j}(X_n;t)$'s. It then follows that the $g_i$'s are symmetric in $X_n$.

We construct an order of these matchings inductively on the number of beads. When there are zero beads there is a unique non-crossing matching (the empty matching). Suppose we know how to correctly order the matchings of $2k$ beads, for all $k<n$.

Now suppose there are $2n$ beads. Recall that the beads in our matching are labeled, with the rightmost bead labeled by 0. There is an arc connecting this bead to some bead to its left, say with bead labeled $2(n-a)-1$ for some $a$. This partitions our matching into three pieces as shown below:
\[
\begin{tikzpicture}[baseline=(current bounding box.north)] 
\draw[dashed] (-0.2,0)--(5.2,0); 
\draw[black, fill=black] (2,0) circle (3pt); \draw[black, fill=black] (5,0) circle (3pt);
\draw[thick] (2,0) arc(180:360:1.5);
\node[below] at (1,0) {$M_b^{(a)}$}; \node[below] at (3.75,0) {$M_c^{(n-a+1)}$};
\node[above] at (2,0) {$2(n-a)-1$}; \node[above] at (5,0) {$0$};
\end{tikzpicture}
\]
We can now uniquely describe any matching by the triple $(a,b,c)$, $0\le a< n$, $0<b\le C_a$, $0<c\le C_{n-a-1}$, by decomposing it into the $b^{th}$ matching of $2a$ beads $M_b^{(a)}$ and the $c^{th}$ matching of $2(n-a-1)$ beads $M_c^{(n-a-1)}$.

We then order the matchings on $2n$ beads using dictionary ordering, that is, by the rule $(a_1,b_1,c_1) < (a_2,b_2,c_2)$ iff $a_1<a_2$, or $a_1 = a_2$ and $b_1<b_2$, or $a_1=a_2$ and $b_1=b_2$ and $c_1<c_2$.

For each of the matchings, we choose to color the beads such that the blue bead is always the right endpoint of an arc. If $M_i$ is the $i^{th}$ matching of $2n$ beads, our coloring picks out the unique partition in our family associated to that sequence of colored beads. Call this partition $\bm{\beta}_i$. We will now show that under these choices we have
\[
\mathcal{L}_{\bm{\beta}_i}(X_n;t) = g_i(X_n;t) + \sum_{j<i} t^{n_j(\bm{\beta}_i)} g_j(X_n;t),
\]
that is, $\bm{M}(t)$ is lower triangular with ones on the diagonal. 

The fact that the coefficient in front of the $g_i(X_n;t)$ for $\mathcal{L}_{\bm{\beta}_i}(X_n;t)$ is 1 follows from the definition of the $g_i$'s. To see that the matrix is lower triangular, suppose $M_i$ can be decomposed as $(a,b,c)$. We need to show that the sequence of colored beads associated to $\bm{\beta}_i$ does not have a matching that can be decomposed as $(a',b',c')$ with $(a',b',c')>(a,b,c)$.

Equivalently, we need to show that for any matching $(a,b,c)$ with our choice of coloring, any other matching $(a',b',c')$ that respects this coloring (its arcs must connect beads of different colors) cannot have $(a',b',c')>(a,b,c)$. If $n=1$, then there is only a single matching so this is true.
Assume that this holds for any matching with less than $2n$ beads.

Now, let $(a,b,c)$ be a matching of $2n$ beads with our choice of coloring. Let $(a',b',c')$ be any other matching that respects this choice of coloring. In the case $a'=a$, we must show that either we cannot have $b'>b$, or if $b'=b$, we cannot have $c'>c$. Each of these subcases involve matchings with fewer beads, so by induction the result holds. 

We are left with showing we cannot have $a'>a$. If $a=n-1$ there is nothing to check, so we assume $a<n-1$.

Recall, the beads are labeled $0$ to $2n-1$ going from right to left. Define a function $\rho_n:\{0,\ldots,2n-1\}\rightarrow\Z$ where $\rho_n(k)$ returns the number of blue beads minus the number of red beads up to and including the $k^{th}$ bead. Our choice of coloring guarantees $\rho_n(0)=1$. Note that it is impossible for the $0^{th}$ bead to match with the $k^{th}$ bead unless $\rho_n(k)=0$. So we are done if we can show that for our choice of coloring $\rho_n(k)\ne 0$ for $k<2(n-a)$. 

One can prove, by induction on $n$, that for our choice of colorings $\rho_n$ is always non-negative. This implies that for $k<2(n-a)$ we have $\rho_n(k)-\rho_n(0)\ge 0$ since this is exactly the function $\rho_{n-a-1}$ defined for the matching $M_c^{(n-a-1)}$. It follows that $\rho_n(k) \ge \rho_n(0)=1$, as desired.

We have shown that $\bm{M}(t)$ is lower triangular with ones on the diagonal. Therefore, it is invertible. This gives us a way to write the $g_i$'s as a sum of LLT polynomials, from which we see the $g_i$'s are symmetric.

\end{proof}

As a specific example of Theorem \ref{thm:bigfam}, consider the case when $n=3$ and $\beta_{6-k}=k+1$ for $k=0,\ldots,5$. There are $\binom{6}{3}=20$ tuples of partitions in this family:
\[
\begin{aligned}
\bm{\lambda}_1 = ((4,4,4),(1,1,1)), & \; \bm{\lambda}_2 = ((4,4,3),(2,1,1)) \\
\bm{\lambda}_3 = ((4,4,2),(2,2,1)), & \; \bm{\lambda}_4 = ((4,3,3),(3,1,1)) \\
\bm{\lambda}_5 = ((4,3,2),(3,2,1)), & \; \bm{\lambda}_6 = ((4,4,1),(2,2,2))\\
\bm{\lambda}_7 = ((4,1,1),(3,3,3)), & \; \bm{\lambda}_8 = ((4,2,2),(3,3,1)) \\
\bm{\lambda}_9 = ((4,2,1),(3,3,2)), & \; \bm{\lambda}_{10} = ((4,3,1),(3,2,2))
\end{aligned}
\]
with the tuples $\bm{\lambda}_{10+j}$ obtain by swapping the order of the partitions in $\bm{\lambda}_j$. These are all the tuples that can be obtained by starting with the boundary condition for $\bm{\lambda}_1$ 
\[
\resizebox{0.3\textwidth}{!}{
\begin{tikzpicture}[baseline=(current bounding box.center)] 
\draw[help lines] (0,0) grid (7,1);
\draw[help lines] (0,2) grid (7,3);
\draw[help lines] (0,1)--(0,2); \draw[help lines] (7,1)--(7,2);
\draw[blue] (4.6,2.4)--(4.6,3); \draw[blue] (5.6,2.4)--(5.6,3); \draw[blue] (6.6,2.4)--(6.6,3); \draw[red] (1.4,2.6)--(1.4,3); \draw[red] (3.4,2.6)--(3.4,3); \draw[red] (2.4,2.6)--(2.4,3);
\draw[blue] (0.4,0)--(0.4,0.6); \draw[blue] (1.4,0)--(1.4,0.6); \draw[blue] (2.4,0)--(2.4,0.6); \draw[red] (0.6,0)--(0.6,0.4); \draw[red] (1.6,0)--(1.6,0.4); \draw[red] (2.6,0)--(2.6,0.4);
\end{tikzpicture}
}
\]
but choosing a different coloring of the paths on the top boundary while fixing that 3 paths are red and three paths are blue. These tuples of partitions are all associated to matchings on six beads. If we ignore the colors of the beads, there are five possible non-crossing matchings:
\begin{center}
\resizebox{0.8\textwidth}{!}{
\begin{tabular}{c|c|c|c|c}
$M_1$ & $M_2$ & $M_3$ & $M_4$ & $M_5$ \\
\resizebox{0.2\textwidth}{!}{
\begin{tikzpicture}[baseline=(current bounding box.north)] 
\draw[dashed] (-0.2,0)--(5.2,0); 
\draw[black, fill=black] (0,0) circle (3pt); \draw[black, fill=black] (1,0) circle (3pt); \draw[black, fill=black] (2,0) circle (3pt); \draw[black, fill=black] (3,0) circle (3pt); \draw[black, fill=black] (4,0) circle (3pt); \draw[black, fill=black] (5,0) circle (3pt);
\draw[thick] (0,0) arc(180:360:2.5); \draw[thick] (1,0) arc(180:360:1.5); \draw[thick] (2,0) arc(180:360:0.5);
\node[above] at (0,0) {$5$}; \node[above] at (1,0) {$4$}; \node[above] at (2,0) {$3$}; \node[above] at (3,0) {$2$}; \node[above] at (4,0) {$1$}; \node[above] at (5,0) {$0$};
\end{tikzpicture}
}
&
\resizebox{0.2\textwidth}{!}{
\begin{tikzpicture}[baseline=(current bounding box.north)] 
\draw[dashed] (-0.2,0)--(5.2,0); 
\draw[black, fill=black] (0,0) circle (3pt); \draw[black, fill=black] (1,0) circle (3pt); \draw[black, fill=black] (2,0) circle (3pt); \draw[black, fill=black] (3,0) circle (3pt); \draw[black, fill=black] (4,0) circle (3pt); \draw[black, fill=black] (5,0) circle (3pt);
\draw[thick] (0,0) arc(180:360:2.5); \draw[thick] (1,0) arc(180:360:0.5); \draw[thick] (3,0) arc(180:360:0.5);
\node[above] at (0,0) {$5$}; \node[above] at (1,0) {$4$}; \node[above] at (2,0) {$3$}; \node[above] at (3,0) {$2$}; \node[above] at (4,0) {$1$}; \node[above] at (5,0) {$0$};
\end{tikzpicture}
}
&
\resizebox{0.2\textwidth}{!}{
\begin{tikzpicture}[baseline=(current bounding box.north)] 
\draw[dashed] (-0.2,0)--(5.2,0); 
\draw[black, fill=black] (0,0) circle (3pt); \draw[black, fill=black] (1,0) circle (3pt); \draw[black, fill=black] (2,0) circle (3pt); \draw[black, fill=black] (3,0) circle (3pt); \draw[black, fill=black] (4,0) circle (3pt); \draw[black, fill=black] (5,0) circle (3pt);
\draw[thick] (0,0) arc(180:360:0.5); \draw[thick] (2,0) arc(180:360:1.5); \draw[thick] (3,0) arc(180:360:0.5);
\node[above] at (0,0) {$5$}; \node[above] at (1,0) {$4$}; \node[above] at (2,0) {$3$}; \node[above] at (3,0) {$2$}; \node[above] at (4,0) {$1$}; \node[above] at (5,0) {$0$};
\end{tikzpicture}
}
&
\resizebox{0.2\textwidth}{!}{
\begin{tikzpicture}[baseline=(current bounding box.north)] 
\draw[dashed] (-0.2,0)--(5.2,0); 
\draw[black, fill=black] (0,0) circle (3pt); \draw[black, fill=black] (1,0) circle (3pt); \draw[black, fill=black] (2,0) circle (3pt); \draw[black, fill=black] (3,0) circle (3pt); \draw[black, fill=black] (4,0) circle (3pt); \draw[black, fill=black] (5,0) circle (3pt);
\draw[thick] (0,0) arc(180:360:1.5); \draw[thick] (1,0) arc(180:360:0.5); \draw[thick] (4,0) arc(180:360:0.5);
\node[above] at (0,0) {$5$}; \node[above] at (1,0) {$4$}; \node[above] at (2,0) {$3$}; \node[above] at (3,0) {$2$}; \node[above] at (4,0) {$1$}; \node[above] at (5,0) {$0$};
\end{tikzpicture}
}
&
\resizebox{0.2\textwidth}{!}{
\begin{tikzpicture}[baseline=(current bounding box.north)] 
\draw[dashed] (-0.2,0)--(5.2,0); 
\draw[black, fill=black] (0,0) circle (3pt); \draw[black, fill=black] (1,0) circle (3pt); \draw[black, fill=black] (2,0) circle (3pt); \draw[black, fill=black] (3,0) circle (3pt); \draw[black, fill=black] (4,0) circle (3pt); \draw[black, fill=black] (5,0) circle (3pt);
\draw[thick] (0,0) arc(180:360:0.5); \draw[thick] (2,0) arc(180:360:0.5); \draw[thick] (4,0) arc(180:360:0.5);
\node[above] at (0,0) {$5$}; \node[above] at (1,0) {$4$}; \node[above] at (2,0) {$3$}; \node[above] at (3,0) {$2$}; \node[above] at (4,0) {$1$}; \node[above] at (5,0) {$0$};
\end{tikzpicture}
}
\end{tabular}
}
\end{center}
which we order as in the proof of the theorem. We also choose a coloring of the beads for each matching as in the theorem:
\begin{center}
\resizebox{0.8\textwidth}{!}{
\begin{tabular}{c|c|c|c|c}
\resizebox{0.2\textwidth}{!}{
\begin{tikzpicture}[baseline=(current bounding box.north)] 
\draw[dashed] (-0.2,0)--(5.2,0); 
\draw[red, fill=red] (0,0) circle (3pt); \draw[red, fill=red] (1,0) circle (3pt); \draw[red, fill=red] (2,0) circle (3pt); \draw[blue, fill=blue] (3,0) circle (3pt); \draw[blue, fill=blue] (4,0) circle (3pt); \draw[blue, fill=blue] (5,0) circle (3pt);
\draw[thick] (0,0) arc(180:360:2.5); \draw[thick] (1,0) arc(180:360:1.5); \draw[thick] (2,0) arc(180:360:0.5);
\node[above] at (0,0) {$5$}; \node[above] at (1,0) {$4$}; \node[above] at (2,0) {$3$}; \node[above] at (3,0) {$2$}; \node[above] at (4,0) {$1$}; \node[above] at (5,0) {$0$};
\end{tikzpicture}
}
&
\resizebox{0.2\textwidth}{!}{
\begin{tikzpicture}[baseline=(current bounding box.north)] 
\draw[dashed] (-0.2,0)--(5.2,0); 
\draw[red, fill=red] (0,0) circle (3pt); \draw[red, fill=red] (1,0) circle (3pt); \draw[blue, fill=blue] (2,0) circle (3pt); \draw[red, fill=red] (3,0) circle (3pt); \draw[blue, fill=blue] (4,0) circle (3pt); \draw[blue, fill=blue] (5,0) circle (3pt);
\draw[thick] (0,0) arc(180:360:2.5); \draw[thick] (1,0) arc(180:360:0.5); \draw[thick] (3,0) arc(180:360:0.5);
\node[above] at (0,0) {$5$}; \node[above] at (1,0) {$4$}; \node[above] at (2,0) {$3$}; \node[above] at (3,0) {$2$}; \node[above] at (4,0) {$1$}; \node[above] at (5,0) {$0$};
\end{tikzpicture}
}
&
\resizebox{0.2\textwidth}{!}{
\begin{tikzpicture}[baseline=(current bounding box.north)] 
\draw[dashed] (-0.2,0)--(5.2,0); 
\draw[red, fill=red] (0,0) circle (3pt); \draw[blue, fill=blue] (1,0) circle (3pt); \draw[red, fill=red] (2,0) circle (3pt); \draw[red, fill=red] (3,0) circle (3pt); \draw[blue, fill=blue] (4,0) circle (3pt); \draw[blue, fill=blue] (5,0) circle (3pt);
\draw[thick] (0,0) arc(180:360:0.5); \draw[thick] (2,0) arc(180:360:1.5); \draw[thick] (3,0) arc(180:360:0.5);
\node[above] at (0,0) {$5$}; \node[above] at (1,0) {$4$}; \node[above] at (2,0) {$3$}; \node[above] at (3,0) {$2$}; \node[above] at (4,0) {$1$}; \node[above] at (5,0) {$0$};
\end{tikzpicture}
}
&
\resizebox{0.2\textwidth}{!}{
\begin{tikzpicture}[baseline=(current bounding box.north)] 
\draw[dashed] (-0.2,0)--(5.2,0); 
\draw[red, fill=red] (0,0) circle (3pt); \draw[red, fill=red] (1,0) circle (3pt); \draw[blue, fill=blue] (2,0) circle (3pt); \draw[blue, fill=blue] (3,0) circle (3pt); \draw[red, fill=red] (4,0) circle (3pt); \draw[blue, fill=blue] (5,0) circle (3pt);
\draw[thick] (0,0) arc(180:360:1.5); \draw[thick] (1,0) arc(180:360:0.5); \draw[thick] (4,0) arc(180:360:0.5);
\node[above] at (0,0) {$5$}; \node[above] at (1,0) {$4$}; \node[above] at (2,0) {$3$}; \node[above] at (3,0) {$2$}; \node[above] at (4,0) {$1$}; \node[above] at (5,0) {$0$};
\end{tikzpicture}
}
&
\resizebox{0.2\textwidth}{!}{
\begin{tikzpicture}[baseline=(current bounding box.north)] 
\draw[dashed] (-0.2,0)--(5.2,0); 
\draw[red, fill=red] (0,0) circle (3pt); \draw[blue, fill=blue] (1,0) circle (3pt); \draw[red, fill=red] (2,0) circle (3pt); \draw[blue, fill=blue] (3,0) circle (3pt); \draw[red, fill=red] (4,0) circle (3pt); \draw[blue, fill=blue] (5,0) circle (3pt);
\draw[thick] (0,0) arc(180:360:0.5); \draw[thick] (2,0) arc(180:360:0.5); \draw[thick] (4,0) arc(180:360:0.5);
\node[above] at (0,0) {$5$}; \node[above] at (1,0) {$4$}; \node[above] at (2,0) {$3$}; \node[above] at (3,0) {$2$}; \node[above] at (4,0) {$1$}; \node[above] at (5,0) {$0$};
\end{tikzpicture}
}
\end{tabular}.
}
\end{center}

The choice of coloring of the beads corresponds uniquely to one of the tuples of partitions in our family. In particular, for our choice of coloring the matching $M_i$ corresponds to $\mathcal{L}_{\bm{\lambda}_i}$ for each $i\in\{1,2,3,4,5\}$. Note that, by construction, $M_i$ is a non-crossing matching associated to $\mathcal{L}_{\bm{\lambda}_i}$, but $\mathcal{L}_{\bm{\lambda}_i}$ might have other non-crossing matchings associated to it as well.  Define the polynomial $g_i(X_n;t)$ as the terms in $\mathcal{L}_{\bm{\lambda}_i}(X_n;t)$ which correspond to the matching $M_i$ under the algorithm in Section \ref{sect:procedure}.

If $M_j$ is a non-crossing matching associated to $\mathcal{L}_{\bm{\lambda}_i}$, let $n_{j}(\bm{\lambda}_i)$ be the difference in the power of $t$ between the weight of the coloring of $M_j$ given by $\mathcal{L}_{\bm{\lambda}_i}$ and our choice of coloring of $M_j$. For example, there are are two non-crossing matchings associated to $\mathcal{L}_{\bm{\lambda}_2}$:
\[
\resizebox{0.3\textwidth}{!}{
\begin{tikzpicture}[baseline=(current bounding box.center)] 
\draw[dashed] (-0.2,0)--(5.2,0); 
\draw[red, fill=red] (0,0) circle (3pt); \draw[red, fill=red] (1,0) circle (3pt); \draw[blue, fill=blue] (2,0) circle (3pt); \draw[red, fill=red] (3,0) circle (3pt); \draw[blue, fill=blue] (4,0) circle (3pt); \draw[blue, fill=blue] (5,0) circle (3pt);
\draw[thick] (0,0) arc(180:360:2.5); \draw[thick] (1,0) arc(180:360:0.5); \draw[thick] (3,0) arc(180:360:0.5);
\node[above] at (0,0) {$5$}; \node[above] at (1,0) {$4$}; \node[above] at (2,0) {$3$}; \node[above] at (3,0) {$2$}; \node[above] at (4,0) {$1$}; \node[above] at (5,0) {$0$};
\end{tikzpicture}
}
\text{ and }
\resizebox{0.3\textwidth}{!}{
\begin{tikzpicture}[baseline=(current bounding box.center)] 
\draw[dashed] (-0.2,0)--(5.2,0); 
\draw[red, fill=red] (0,0) circle (3pt); \draw[red, fill=red] (1,0) circle (3pt); \draw[blue, fill=blue] (2,0) circle (3pt); \draw[red, fill=red] (3,0) circle (3pt); \draw[blue, fill=blue] (4,0) circle (3pt); \draw[blue, fill=blue] (5,0) circle (3pt);
\draw[thick] (0,0) arc(180:360:2.5); \draw[thick] (1,0) arc(180:360:1.5); \draw[thick] (2,0) arc(180:360:0.5);
\node[above] at (0,0) {$5$}; \node[above] at (1,0) {$4$}; \node[above] at (2,0) {$3$}; \node[above] at (3,0) {$2$}; \node[above] at (4,0) {$1$}; \node[above] at (5,0) {$0$};
\end{tikzpicture}
}.
\]
The first is precisely our choice of coloring of $M_2$, so $n_{2}(\bm{\lambda}_2)=0$. However, to make the coloring given here match our choice of coloring of $M_1$ we would need to swap the 3-2 arc which gives a weight of $t^{-1}$. So $n_{1}(\bm{\lambda}_2)=-1$. This allows us to write
\[
\mathcal{L}_{\bm{\lambda}_i} (X_n;t)= \sum_{j} t^{n_{j}(\bm{\lambda}_i)} g_{j}(X_n;t) 
\]
for each $\bm{\lambda}_i$.

Consider the matrix $\bm{M}(t)$ that in the $i^{th}$ row and $j^{th}$ column has $t^{n_{j}(\bm{\lambda}_i)}$ if $M_j$ is a matching associated to $\mathcal{L}_{\bm{\lambda}_i}$, and zero otherwise. Then $\bm{M}(t)$ is the matrix that transforms $(g_1,\ldots,g_5)^T$ into $(\mathcal{L}_{\bm{\lambda}_1},\ldots,\mathcal{L}_{\bm{\lambda}_5})^T$. For the above, this matrix is
\[
\bm{M}(t)=\begin{pmatrix}
1 & 0 & 0 & 0 & 0\\ t^{-1} & 1 & 0 & 0 & 0\\ 0 & t^{-1} & 1 & 0 & 0\\ 0 & t^{-1} & 0 & 1 & 0 \\ t^{-2} & t^{-2} & t^{-1} & t^{-1} & 1
\end{pmatrix}, \;\;\; \bm{M}^{-1}(t)=\begin{pmatrix}
1 & 0 & 0 & 0 & 0\\ -t^{-1} & 1 & 0 & 0 & 0 \\ t^{-2} & -t^{-1} & 1 & 0 & 0\\ t^{-2} & -t^{-1} & 0 & 1 & 0 \\ -t^{-3}-t^{-2} & t^{-2} & -t^{-1} & -t^{-1} & 1
\end{pmatrix} 
\]
where we also write its inverse. The inverse tells us how to write the $g_i$'s as a linear combination of the LLT polynomials. For example,  
\[
g_{3}(X_n;t) = \mathcal{L}_{\bm{\lambda}_3}(X_n;t) - t^{-1} \mathcal{L}_{\bm{\lambda}_2}(X_n;t) + t^2 \mathcal{L}_{\bm{\lambda}_1}(X_n;t).
\]

 Since every tuple of partitions in this family must be made up of configurations corresponding to these five matching, we have
\[
\mathcal{L}_{\bm{\lambda}_i}(X_n;t) = \sum_{j=1}^5 t^{n_{j}(\bm{\lambda}_i)} g_j(X_n;t)
\]
for some integers $n_{j}(\bm{\lambda}_i)$, for all $i=1,2,\ldots,20$ not just $i=1,\ldots,5$. For instance, there are two non-crossing matchings associated to $\bm{\lambda}_{8}$:
\begin{center}
\resizebox{0.5\textwidth}{!}{
\begin{tabular}{c|c}
\resizebox{0.2\textwidth}{!}{
\begin{tikzpicture}[baseline=(current bounding box.north)] 
\draw[dashed] (-0.2,0)--(5.2,0); 
\draw[red, fill=red] (0,0) circle (3pt); \draw[blue, fill=blue] (1,0) circle (3pt); \draw[blue, fill=blue] (2,0) circle (3pt); \draw[red, fill=red] (3,0) circle (3pt); \draw[red, fill=red] (4,0) circle (3pt); \draw[blue, fill=blue] (5,0) circle (3pt);
\draw[thick] (0,0) arc(180:360:2.5); \draw[thick] (1,0) arc(180:360:1.5); \draw[thick] (2,0) arc(180:360:0.5);
\node[above] at (0,0) {$5$}; \node[above] at (1,0) {$4$}; \node[above] at (2,0) {$3$}; \node[above] at (3,0) {$2$}; \node[above] at (4,0) {$1$}; \node[above] at (5,0) {$0$};
\end{tikzpicture}
}
&
\resizebox{0.2\textwidth}{!}{
\begin{tikzpicture}[baseline=(current bounding box.north)] 
\draw[dashed] (-0.2,0)--(5.2,0); 
\draw[red, fill=red] (0,0) circle (3pt); \draw[blue, fill=blue] (1,0) circle (3pt); \draw[blue, fill=blue] (2,0) circle (3pt); \draw[red, fill=red] (3,0) circle (3pt); \draw[red, fill=red] (4,0) circle (3pt); \draw[blue, fill=blue] (5,0) circle (3pt);
\draw[thick] (0,0) arc(180:360:0.5); \draw[thick] (2,0) arc(180:360:0.5); \draw[thick] (4,0) arc(180:360:0.5);
\node[above] at (0,0) {$5$}; \node[above] at (1,0) {$4$}; \node[above] at (2,0) {$3$}; \node[above] at (3,0) {$2$}; \node[above] at (4,0) {$1$}; \node[above] at (5,0) {$0$};
\end{tikzpicture}
}
\end{tabular}
}
\end{center}
For the first matching, we can get from coloring given here to our choice of coloring for $M_1$ by changing the color along the the arc connecting the beads labeled $4$ and $1$ at the cost of a $t^{-2}$, and along the arc connecting the beads labeled $3$ and $2$ at the cost of a $t^{-1}$. For the second matching, we can get from this coloring to our coloring of $M_5$ by swapping the color along the arc connecting beads labeled 3 and 2 at the cost of a $t^{-1}$. All together we have
\[
\begin{aligned}
\mathcal{L}_{\bm{\lambda}_{8}}(X_n;t) = & t^{-3} \; g_1(X_n;t) + t^{-1}\; g_5(X_n;t).
\end{aligned}
\]

While we chose a specific family of LLT polynomials in this example, the same result holds for any family of tuples of partitions which are associated to the same sequence of beads (allowing for changes in the labeling of the beads). The only difference will be the specific powers of $t$ that will appear which one can compute from the matchings.

\section{Conclusion}

In this paper we prove a sufficient condition for when $\mathcal{L}_{(\beta^{(1)}/\gamma^{(1)},\beta^{(2)}/\gamma^{(2)})}(X_n;t)$ is equivalent to $\mathcal{L}_{(\beta^{(2)}/\gamma^{(2)},\beta^{(1)}/\gamma^{(1)})}(X_n;t)$. We do so by giving a bijection between configurations of the vertex model for each of the LLT polynomials. We show that the change in weights under this map is determined by a matching of a sequence of colored beads that can be associated to the boundary condition of the vertex model. When the sequence of beads has a unique non-crossing matching, the bijection is weight-preserving up to an overall power of $t$ and it follows that the LLT polynomials are equivalent. 

Using these techniques we are also able to construct linear relations between different LLT polynomials. Knowing these types of relations has been instrumental in proving results about the expansion of LLT polynomials into Schur and $k$-Schur polynomials \cite{A,Lee,ChrisThesis,Tom}. Our new techniques give a systematic way to determine these relations. While we focused on a few specific families of partitions, we wish to emphasize that these techniques could be applied to many different families without much additional effort.

\section{A: Classification}\label{sect:classify}
In this section we classify the sequences of beads for which there is a unique non-crossing matching. Since we can associate a sequence of beads to every $\bg$, this gives a sufficient condition on tuples of partitions for which the bijection $\Phi$ is weight-preserving up to an overall power of $t$.

\begin{lem}\label{lem:beadexist}
A sequence of beads has at least one non-crossing matching if and only if the difference between the number of red beads and blue beads on the top row equals the difference in the number of red beads and blue beads on the bottom row. In particular, if there is only one row then it must have an equal number of red and blue beads.
\end{lem}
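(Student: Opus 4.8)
The plan is to prove the equivalence by a straightforward bead-count for the forward (necessity) direction and by an ``innermost arc'' induction for the backward (sufficiency) direction, with the ``in particular'' clause falling out as the case of an empty bottom row. Throughout I take a matching to cover every bead, as intended by the construction (each walk contributes one arc, and every singleton boundary path is traversed). For necessity, suppose a non-crossing matching exists and sort its arcs into four types: let $a$ and $b$ be the numbers of same-row arcs in the top and bottom rows respectively (each pairing one red with one blue of its row), and let $c$ and $d$ be the numbers of cross-row arcs that are red--red and blue--blue respectively. Writing $R_t,B_t,R_b,B_b$ for the numbers of red and blue beads in each row, the fact that every bead lies in exactly one arc gives $R_t = a+c$, $B_t = a+d$, $R_b = b+c$, $B_b = b+d$. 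Subtracting within each row yields $R_t - B_t = c - d = R_b - B_b$, which is exactly the asserted equality; in the one-row case $R_b = B_b = 0$ forces $R_t = B_t$.

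For sufficiency I would induct on the total number of beads, the empty sequence giving the (vacuously non-crossing) empty matching. The geometric point that makes the induction work is that the beads sit in cyclic order $t_1,\dots,t_p,b_q,\dots,b_1$ around the boundary of the strip in which arcs are drawn, so that besides the horizontally adjacent same-row pairs, the two extreme cross-row pairs $(t_p,b_q)$ and $(t_1,b_1)$ are also adjacent. Call an adjacent pair \emph{admissible} if it can carry an arc, i.e.\ a same-row pair of opposite colours or an extreme cross-row pair of equal colours. The crux is the claim that \emph{whenever the equality holds and at least one bead remains, some adjacent pair is admissible}. I would prove this by contradiction: if no adjacent pair is admissible, then every same-row adjacency has equal colours, so each row is monochromatic, and the inadmissibility of the extreme cross pairs forces the two (monochromatic) rows to have opposite colours when both are nonempty. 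But then $R_t - B_t = \pm p$ and $R_b - B_b = \mp q$ have opposite signs and cannot be equal unless $p=q=0$; and if one row is empty, the other is a nonempty monochromatic row of nonzero colour difference against a zero difference, again violating the equality.

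Granting the claim, I would complete the induction as follows. Pick an admissible adjacent pair and draw its arc innermost (a small arc between two neighbours, or an arc hugging the left/right edge of the strip for an extreme cross pair), then delete the two beads. Deletion preserves the two-row structure, and one checks case by case that it alters both $R_t - B_t$ and $R_b - B_b$ by the \emph{same} increment (namely $0$ for a same-row removal and $\pm 1$ for a cross-row removal), so the equality is inherited by the smaller sequence. The inductive hypothesis supplies a non-crossing matching of the remainder, and adjoining the innermost arc keeps it non-crossing by the standard nesting argument, since every surviving arc either lies entirely to one side of the deleted pair or passes over it.

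The main obstacle I anticipate is not depth but careful bookkeeping of the two-row geometry: one must justify that the left- and rightmost cross-row pairs genuinely count as adjacencies of the boundary cycle, and then dispatch the degenerate configurations (rows of size zero or one) in the admissibility claim. The cyclic description of the boundary is precisely what turns the ``no admissible pair'' hypothesis into the clean dichotomy of monochromatic rows of opposite colour, and it is the step where an off-by-one or a missed degenerate case is most likely to creep in.
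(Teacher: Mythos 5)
Your proof is correct, but the sufficiency direction takes a genuinely different route from the paper's. Both arguments rest on the same key geometric observation -- that the beads should be read in the boundary order $t_1,\dots,t_p,b_q,\dots,b_1$ of the strip -- but they decompose the problem differently. The paper runs a ballot-sequence argument: it tracks the partial imbalance $f(i)$ along this ordering, matches the first bead to the bead at the \emph{first return of $f$ to zero} (the sign of the last step forces that bead to have a compatible colour/row), and then recurses on the inside and outside of this outermost arc. You instead peel off an \emph{innermost} arc: you show that under the balance condition some adjacent pair (same-row neighbours of opposite colour, or an extreme cross-row pair of equal colour) must be admissible -- via the clean contradiction that otherwise both rows are monochromatic of opposite colours, forcing imbalances of opposite sign -- and then delete it and induct. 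Your version buys a slightly more elementary inductive step (each arc added is trivially non-crossing, being drawn between boundary-adjacent beads) at the price of the extra admissibility lemma; the paper's version identifies the mate of the first bead in one stroke but leans on a discrete intermediate-value argument. Your necessity direction is also more careful than the paper's, which simply declares it obvious: the explicit count $R_t-B_t=c-d=R_b-B_b$ makes transparent that the statement depends on the matching being \emph{perfect} (every bead covered), an assumption the paper leaves implicit in its definition of matching but uses throughout, and which you rightly flag. As a small bonus, your count also shows the total number of beads is even whenever the balance condition holds, which is what guarantees your two-at-a-time induction terminates at the empty sequence.
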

\begin{proof}
Given a matching, the difference in the number of red beads and the number of blue beads in either row is given by the number of arcs going from the top row to the bottom connecting red beads minus those connecting blue beads. 
It follows that if we have a matching then the difference between the number of red beads and blue beads on the top row must equal the difference in the number of red beads and blue beads on the bottom row.

Now assume that the constraint on the beads holds. As a base case, when there are two beads we can either have a red and blue bead in the same row, or beads of the same color in different rows. In either case we have a matching. 

Suppose there are $n$ beads. For concreteness, let the leftmost bead on the top row be red. Order the beads from 1 to $n$, going from from left to right on the top row, then from right to left on the bottom row. Let $f:\{1,\ldots,n\}\rightarrow \Z$ be the function such that $f(i)$ returns the difference between the number of red beads and blue beads on the top row minus the difference in the number of red beads and blue beads on the bottom row in the first $i$ beads in our sequence. Note that $f(n)=0$ due to the constraint on the beads, and $f(1)=1$ as we assume the first bead is red.

Let $k$ the smallest integer under this ordering such that $f(k)=0$. Then $f(k-1)$ must be positive, as $|f(i+1)-f(i)|=1$ and $f(0)=1>0$. This implies that if the $k^{th}$ bead is in the top row, it must be blue since for beads in the top row $f(i) < f(i-1)$ iff the $i^{th}$ bead is blue. Similarly, if the $k^{th}$ bead is in the bottom row, then it must be red. 

Adding an arc that connects the first bead to the $k^{th}$ bead splits the sequence of beads into two pieces: those beads between the $1^{st}$ and the $k^{th}$ bead (inclusively), and those after the $k^{th}$. For each piece the difference between the number of red beads and blue beads on the top row equals the difference in the number of red beads and blue beads on the bottom row. By induction each of the pieces has a non-crossing matching. Thus the whole sequence of beads has a non-crossing matching.
\end{proof}

\begin{prop}\label{prop:beads1row}
For a single row, sequences of beads that have a unique non-crossing matching are of the form
\[
\underbrace{
\begin{tikzpicture}[baseline=(current bounding box.center)] 
\draw[dashed] (-0.2,0)--(2.2,0);
\draw[blue, fill=blue] (0,0) circle (3pt); \node[below] at (1,0) {$\ldots$}; \draw[blue, fill=blue] (2,0) circle (3pt);
\end{tikzpicture}
}_{p}
\underbrace{
\begin{tikzpicture}[baseline=(current bounding box.center)] 
\draw[dashed] (-0.2,0)--(2.2,0);
\draw[red, fill=red] (0,0) circle (3pt); \node[below] at (1,0) {$\ldots$}; \draw[red, fill=red] (2,0) circle (3pt);
\end{tikzpicture}
}_{q}
\underbrace{
\begin{tikzpicture}[baseline=(current bounding box.center)] 
\draw[dashed] (-0.2,0)--(2.2,0);
\draw[blue, fill=blue] (0,0) circle (3pt); \node[below] at (1,0) {$\ldots$}; \draw[blue, fill=blue] (2,0) circle (3pt);
\end{tikzpicture}
}_{r}
\]
with $p+r=q$, or the same configurations as above with red and blue swapped.
\end{prop}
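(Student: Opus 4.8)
The plan is to prove both inclusions by a single strong induction on the number of beads, using a lattice-path encoding to control which beads the leftmost bead can be matched to. By Lemma~\ref{lem:beadexist} a one-row sequence admits a matching only when it has equally many red and blue beads, so I assume this; by the red/blue symmetry I may also assume the leftmost bead is blue, and I will show such a sequence $w=a_1\cdots a_n$ (write $M(w)$ for its number of non-crossing matchings) has $M(w)=1$ precisely when $w=B^pR^qB^r$ with $p+r=q$ (writing $B,R$ for blue and red). Encode $w$ by the height function $h(i)$ equal to the number of blue beads among the first $i$ minus the number of red beads among the first $i$, so that a blue bead is an up-step and a red bead a down-step, with $h(0)=h(n)=0$ and $h(1)=1$.

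The key observation is that in any non-crossing matching the leftmost (blue) bead must be joined to a red bead at some position $j$ for which the enclosed block $a_2\cdots a_{j-1}$ is balanced; equivalently $h(j)=0$ with $a_j$ a down-step, a \emph{downstep-return to zero}. Conversely every such $j$ extends to at least one non-crossing matching, because then both the interior $a_2\cdots a_{j-1}$ and the exterior $a_{j+1}\cdots a_n$ are balanced, hence matchable by Lemma~\ref{lem:beadexist}. Therefore uniqueness forces exactly one downstep-return to zero — which, since $h(1)=1$, is the first return $j^{\ast}$ — and forces both the interior $u=a_2\cdots a_{j^{\ast}-1}$ and the exterior $v=a_{j^{\ast}+1}\cdots a_n$ to be uniquely matchable, so by the induction hypothesis $u$ and $v$ are of block form.

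It remains to upgrade ``block form'' for $u$ and $v$ to the precise shapes forced by the height data. Since $h\ge 1$ on $\{1,\dots,j^{\ast}-1\}$, the relative path of $u$ stays nonnegative, and a short case check over the two block families shows the only block-form words with a nonnegative height path are $B^mR^m$; hence $u=B^mR^m$ and the prefix through $j^{\ast}$ is $B^{m+1}R^{m+1}$. The absence of any further downstep-return to zero means the relative path of $v$ never steps from height $1$ down to $0$, and another case check shows the only block-form words avoiding such a step are $R^sB^s$. Concatenating gives $w=B^{m+1}R^{m+1}R^sB^s=B^{m+1}R^{m+1+s}B^s$, of the desired form with $p=m+1$, $q=m+1+s$, $r=s$, so $p+r=q$. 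For the converse I run the same height computation directly on $B^pR^qB^r$: there is a single downstep-return to zero, at position $2p$, splitting off interior $B^{p-1}R^{p-1}$ and exterior $R^rB^r$, both again of block form, so the induction yields $M(w)=1$.

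The main obstacle is this last crux: translating the two purely combinatorial constraints (exactly one downstep-return to zero, and nonnegativity of the interior path) into the exact words $B^mR^m$ and $R^sB^s$, and keeping the bookkeeping uniform across the degenerate cases — empty interior when $j^{\ast}=2$, empty exterior when $j^{\ast}=n$, and $p$ or $r$ equal to zero. The remaining care is organizational: verifying that each downstep-return to zero contributes an independent matching, so that two of them force $M(w)\ge 2$, and recording the red/blue swap symmetry so that a red leftmost bead yields the mirror family $R^pB^qR^r$.
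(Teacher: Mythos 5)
Your proof is correct, but it takes a genuinely different route from the paper's. The paper proves the hard direction by showing that any sequence not of block form contains a red--blue--red--blue (or blue--red--blue--red) subsequence and then runs a fairly long induction with an explicit case analysis (where does the first red of the pattern match: to the right of the blue, to the left, or to the adjacent blue, with further sub-cases) to exhibit a second non-crossing matching in every case. You instead set up a first-return decomposition: encoding the row as a $\pm1$ height path, observing that the leftmost bead's partner must sit at a downstep-return to zero, and that every such position contributes at least one matching by Lemma~\ref{lem:beadexist}, you get the recursion $M(w)=\sum_j M(\mathrm{int}_j)M(\mathrm{ext}_j)$, so uniqueness is equivalent to having a single candidate partner together with uniqueness for the interior and exterior; the height constraints then pin the interior to $B^mR^m$ and the exterior to $R^sB^s$. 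Your argument is shorter, avoids the case explosion, treats both directions of the equivalence uniformly (the paper only asserts the easy direction), and yields the Catalan-type counting formula for $M(w)$ as a byproduct; the paper's argument is more hands-on in that it exhibits the two competing matchings explicitly, which is in the same spirit as its two-row classification in Proposition~\ref{prop:beads2rows}. The points you flag as needing care (the two ``short case checks'' over the block families, the degenerate empty interior/exterior, and the fact that distinct downstep-returns give distinct matchings) are all genuine but routine, and your sketches of them are accurate.
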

\begin{proof}
One can see that such a sequence admits only one non-crossing matching with the $p$ blue beads on the left matching with the $p$ leftmost red beads and the $r$ blue beads on the right matching with the $r$ rightmost read beads. It is left to show that if we have any other type of configuration there are multiple non-crossing matchings. In particular, if we have a sequence of beads not of the above form it must contain a subsequence of the form
\[
\begin{tikzpicture}[baseline=(current bounding box.center)] 
\draw[dashed] (-0.2,0)--(6.2,0);
\draw[blue, fill=blue] (0,0) circle (3pt); \node[below] at (1,0) {$\ldots$}; \draw[red, fill=red] (2,0) circle (3pt);  \node[below] at (3,0) {$\ldots$}; \draw[blue, fill=blue] (4,0) circle (3pt); \node[below] at (5,0) {$\ldots$}; \draw[red, fill=red] (6,0) circle (3pt);
\end{tikzpicture}
\]
or
\[
\begin{tikzpicture}[baseline=(current bounding box.center)] 
\draw[dashed] (-0.2,0)--(6.2,0);
\draw[red, fill=red] (0,0) circle (3pt); \node[below] at (1,0) {$\ldots$}; \draw[blue, fill=blue] (2,0) circle (3pt);  \node[below] at (3,0) {$\ldots$}; \draw[red, fill=red] (4,0) circle (3pt); \node[below] at (5,0) {$\ldots$}; \draw[blue, fill=blue] (6,0) circle (3pt);
\end{tikzpicture}.
\]
We will show that in this case there are at least two non-crossing matchings. We will do this by constructing two matching. As a base case, when there are two blue and two red beads, we have the two matchings
\[
\begin{aligned}
\begin{tikzpicture}[baseline=(current bounding box.center)] 
\draw[dashed] (-0.2,0)--(3.2,0);
\draw[red, fill=red] (0,0) circle (3pt);  \draw[blue, fill=blue] (1,0) circle (3pt); \draw[red, fill=red] (2,0) circle (3pt); \draw[blue, fill=blue] (3,0) circle (3pt);
\draw[thick] (0,0) arc(180:360:1.5); \draw[thick] (1,0) arc(180:360:0.5);
\end{tikzpicture} \text{ and } & \;\;\
\begin{tikzpicture}[baseline=(current bounding box.center)] 
\draw[dashed] (-0.2,0)--(3.2,0);
\draw[red, fill=red] (0,0) circle (3pt);  \draw[blue, fill=blue] (1,0) circle (3pt); \draw[red, fill=red] (2,0) circle (3pt); \draw[blue, fill=blue] (3,0) circle (3pt);
\draw[thick] (0,0) arc(180:360:0.5); \draw[thick] (2,0) arc(180:360:0.5);
\end{tikzpicture}.
\end{aligned}
\]

Now suppose we have $n$ red and $n$ blue beads. Suppose our sequence of beads contains a subsequence of the form red-blue-red-blue (the case of blue-red-blue-red is handled the same way with all the colors swapped). We can always choose the subsequence so the first red and blue beads in the sequence are adjacent.

From Lemma \ref{lem:beadexist} we know there is at least one matching. Let us consider where this first red bead matches. We mark both the red bead and the adjacent blue bead with stars so that they are easily distinguished. There are three cases:

\noindent Case 1: The red matches to the right of the blue. Then we must have a matching of the form
\[
\begin{tikzpicture}[baseline=(current bounding box.center)] 
\draw[dashed] (-1.2,0)--(6.2,0);
 \node[below] at (-1,0) {$\ldots$}; \draw[red, fill=red] (0,0) circle (3pt); \draw[blue, fill=blue] (1,0) circle (3pt);  \node[below] at (2,0) {$\ldots$}; \draw[red, fill=red] (3,0) circle (3pt); \node[below] at (4,0) {$\ldots$}; \draw[blue, fill=blue] (5,0) circle (3pt);  \node[below] at (6,0) {$\ldots$};
 \node[below] at (2,-0.1) {interior$_1$};  \node[below] at (4,-0.1) {interior$_2$};
 \draw[thick] (0,0) arc(180:360:2.5); \draw[thick] (1,0) arc(180:360:1.0);
 \node[above] at (0,0) {$*$};
 \node[above] at (1,0) {$*$};
\end{tikzpicture}.
\]
Note that any arcs in interior$_1$ must stay in interior$_1$, and similarly for arcs in interior$_2$. Here we can simply exhibit a second matching
\[
\begin{tikzpicture}[baseline=(current bounding box.center)] 
\draw[dashed] (-1.2,0)--(6.2,0);
\node[below] at (-1,0) {$\ldots$}; \draw[red, fill=red] (0,0) circle (3pt); \draw[blue, fill=blue] (1,0) circle (3pt);  \node[below] at (2,0) {$\ldots$}; \draw[red, fill=red] (3,0) circle (3pt); \node[below] at (4,0) {$\ldots$}; \draw[blue, fill=blue] (5,0) circle (3pt);  \node[below] at (6,0) {$\ldots$};
\node[below] at (2,-0.1) {interior$_1$};  \node[below] at (4,-0.1) {interior$_2$};
\draw[thick] (0,0) arc(180:360:0.5); \draw[thick] (3,0) arc(180:360:1.0);
\node[above] at (0,0) {$*$};
\node[above] at (1,0) {$*$};
\end{tikzpicture}
\]
where the matching of all the other beads remains the same.

\noindent Case 2: The red matches to the left. Here we have two subcases, the blue matching to the left or the blue matching to the right. The matchings take the form
\[
\begin{aligned}
\resizebox{0.4\textwidth}{!}{
\begin{tikzpicture}[baseline=(current bounding box.center)] 
\draw[dashed] (-1.2,0)--(6.2,0);
\node[below] at (-1,0) {$\ldots$}; \draw[red, fill=red] (0,0) circle (3pt); \node[below] at (1,0) {$\ldots$}; \draw[blue, fill=blue] (2,0) circle (3pt);  \node[below] at (3,0) {$\ldots$}; \draw[red, fill=red] (4,0) circle (3pt);  \draw[blue, fill=blue] (5,0) circle (3pt); \node[below] at (6,0) {$\ldots$};
 \node[below] at (1,-0.1) {interior$_1$};  \node[below] at (3,-0.1) {interior$_2$};
 \draw[thick] (0,0) arc(180:360:2.5); \draw[thick] (2,0) arc(180:360:1.0);
 \node[above] at (4,0) {$*$};
 \node[above] at (5,0) {$*$};
\end{tikzpicture}
}
 \text{ or }
 \resizebox{0.4\textwidth}{!}{
\begin{tikzpicture}[baseline=(current bounding box.center)] 
\draw[dashed] (-1.2,0)--(6.2,0);
\node[below] at (-1,0) {$\ldots$}; \draw[blue, fill=blue] (0,0) circle (3pt);  \node[below] at (1,0) {$\ldots$}; \draw[red, fill=red] (2,0) circle (3pt);  \draw[blue, fill=blue] (3,0) circle (3pt); \node[below] at (4,0) {$\ldots$}; \draw[red, fill=red] (5,0) circle (3pt); \node[below] at (6,0) {$\ldots$};
 \node[below] at (1,-0.1) {interior$_1$};  \node[below] at (4,-0.1) {interior$_2$};
 \draw[thick] (0,0) arc(180:360:1.0); \draw[thick] (3,0) arc(180:360:1.0);
 \node[above] at (2,0) {$*$};
 \node[above] at (3,0) {$*$};
\end{tikzpicture}
}
\end{aligned}
\]
where again any arcs in interior$_1$ must stay in interior$_1$, and similarly for arcs in interior$_2$. Again we can simply exhibit a second matching
\[
\begin{aligned}
\resizebox{0.4\textwidth}{!}{
\begin{tikzpicture}[baseline=(current bounding box.center)] 
\draw[dashed] (-1.2,0)--(6.2,0);
\node[below] at (-1,0) {$\ldots$}; \draw[red, fill=red] (0,0) circle (3pt); \node[below] at (1,0) {$\ldots$}; \draw[blue, fill=blue] (2,0) circle (3pt);  \node[below] at (3,0) {$\ldots$}; \draw[red, fill=red] (4,0) circle (3pt);  \draw[blue, fill=blue] (5,0) circle (3pt); \node[below] at (6,0) {$\ldots$};
 \node[below] at (1,-0.1) {interior$_1$};  \node[below] at (3,-0.1) {interior$_2$};
 \draw[thick] (0,0) arc(180:360:1.0); \draw[thick] (4,0) arc(180:360:0.5);
 \node[above] at (4,0) {$*$};
 \node[above] at (5,0) {$*$};
\end{tikzpicture}
}
 \text{ or }
\resizebox{0.4\textwidth}{!}{
\begin{tikzpicture}[baseline=(current bounding box.center)] 
\draw[dashed] (-1.2,0)--(6.2,0);
\node[below] at (-1,0) {$\ldots$}; \draw[blue, fill=blue] (0,0) circle (3pt);  \node[below] at (1,0) {$\ldots$}; \draw[red, fill=red] (2,0) circle (3pt);  \draw[blue, fill=blue] (3,0) circle (3pt); \node[below] at (4,0) {$\ldots$}; \draw[red, fill=red] (5,0) circle (3pt); \node[below] at (6,0) {$\ldots$};
\node[below] at (1,-0.1) {interior$_1$};  \node[below] at (4,-0.1) {interior$_2$};
\draw[thick] (0,0) arc(180:360:2.5); \draw[thick] (2,0) arc(180:360:0.5);
\node[above] at (2,0) {$*$};
\node[above] at (3,0) {$*$};
\end{tikzpicture}
}
\end{aligned}
\]
where the matching of all the other beads remains the same.

\noindent Case 3: The red matches with the blue. Since we know this pair of beads is the first red-blue in a red-blue-red-blue subsequence we can instead look at the second red-blue pair. Again we can always choose them to be adjacent. Repeating the analysis from case 1 and 2 above, we are left only with the case when the the red and blue beads in this pair also match. The matching then looks like
\[
\begin{tikzpicture}[baseline=(current bounding box.center)] 
\draw[dashed] (-1.2,0)--(5.2,0);
 \node[below] at (-1,0) {$\ldots$}; \draw[red, fill=red] (0,0) circle (3pt); \draw[blue, fill=blue] (1,0) circle (3pt);  \node[below] at (2,0) {$\ldots$}; \draw[red, fill=red] (3,0) circle (3pt); \draw[blue, fill=blue] (4,0) circle (3pt);  \node[below] at (5,0) {$\ldots$};
 \draw[thick] (0,0) arc(180:360:0.5); \draw[thick] (3,0) arc(180:360:0.5);
 \node[below] at (2,-0.1) {interior}; \node[below] at (-1,-0.1) {exterior}; \node[below] at (5,-0.1) {exterior};
 \node[above] at (0,0) {$*$};
 \node[above] at (1,0) {$*$};
\end{tikzpicture}
\] 
where we label the other beads as being in the interior or exterior. 

If the interior is empty we can exhibit a second matching as in the base case. Similarly if beads in the interior only match with other beads in the interior, we can exhibit a second matching. So suppose there are beads in the interior which match with beads in the exterior. 

If a red bead in the interior matches with a blue bead in the left exterior, then these beads and all those in between them form a sequence that contains a blue-red-blue-red subsequence which by induction has more than one matching. Similarly, if a blue bead in the interior matches with a red bead in the right exterior, by induction we have more than one matching. So we are left to consider the case in which either some blue beads in the interior match with red beads in the left exterior, some red beads in the interior match with blue beads in the right exterior, or both. 

Let us look only at the rightmost blue bead in the interior that matches with the left exterior (or the original left red-blue pair if no such blue bead exists) and the leftmost red bead in the interior that matches with the right exterior (or the original right red-blue pair if no such red bead exists). The matching takes the form
\[
\begin{tikzpicture}[baseline=(current bounding box.center)] 
\draw[dashed] (-1.2,0)--(7.2,0);
 \node[below] at (-1,0) {$\ldots$}; \draw[red, fill=red] (0,0) circle (3pt); \node[below] at (1,0) {$\ldots$}; \draw[blue, fill=blue] (2,0) circle (3pt);  \node[below] at (3,0) {$\ldots$}; \draw[red, fill=red] (4,0) circle (3pt); \node[below] at (5,0) {$\ldots$}; \draw[blue, fill=blue] (6,0) circle (3pt);  \node[below] at (7,0) {$\ldots$};
 \draw[thick] (0,0) arc(180:360:1.0); \draw[thick] (4,0) arc(180:360:1.0);
 \node[below] at (3,-0.1) {interior}; 
 \end{tikzpicture}
\]
where now beads in the interior must match only with other beads in the interior. Now we can exhibit a second matching
\[
\resizebox{0.4\textwidth}{!}{
\begin{tikzpicture}[baseline=(current bounding box.center)] 
\draw[dashed] (-1.2,0)--(7.2,0);
 \node[below] at (-1,0) {$\ldots$}; \draw[red, fill=red] (0,0) circle (3pt); \node[below] at (1,0) {$\ldots$}; \draw[blue, fill=blue] (2,0) circle (3pt);  \node[below] at (3,0) {$\ldots$}; \draw[red, fill=red] (4,0) circle (3pt); \node[below] at (5,0) {$\ldots$}; \draw[blue, fill=blue] (6,0) circle (3pt);  \node[below] at (7,0) {$\ldots$};
 \draw[thick] (0,0) arc(180:360:3.0); \draw[thick] (2,0) arc(180:360:1.0);
 \node[below] at (3,-0.1) {interior}; 
 \end{tikzpicture}
}.
\]

We see that if the sequence of beads has a red-blue-red-blue subsequence, there are at least two non-crossing matchings. Repeating the analysis but swapping all the colors gives the same result for blue-red-blue-red subsequences. Thus the only sequences of beads that have a unique non-crossing matching are those in the statement of the proposition.
\end{proof}

\begin{prop}\label{prop:beads2rows}
With two rows, sequences of beads that have a unique non-crossing matching are given by
\[
\begin{aligned}
\begin{tikzpicture}[baseline=(current bounding box.center)] 
\draw[dashed] (-0.2,1)--(4.5,1);
\draw[dashed] (-0.2,0)--(4.5,0);
\draw[red, fill=red] (0,1) circle (3pt); \node[below] at (1,1) {$\ldots$}; \draw[red, fill=red] (2,1) circle (3pt);
\draw[blue, fill=blue] (2.3,1) circle (3pt); \node[below] at (3.3,1) {$\ldots$}; \draw[blue, fill=blue] (4.3,1) circle (3pt);
\draw[blue, fill=blue] (0.4,0) circle (3pt); \node[above] at (1.1,0) {$\ldots$}; \draw[blue, fill=blue] (2,0) circle (3pt);
\draw[red, fill=red] (2.3,0) circle (3pt); \node[above] at (3.2,0) {$\ldots$}; \draw[red, fill=red] (3.9,0) circle (3pt);
\node[above] at (1,1.1) {$p$};
\node[above] at (3.3,1.1) {$q$};
\node[below] at (1.2,-0.1) {$r$};
\node[below] at (3.1,-0.1) {$s$};
\draw[decorate,decoration=brace] (-0.1,1.1)--(2.1,1.1);
\draw[decorate,decoration=brace] (2.2,1.1)--(4.4,1.1);
\draw[decorate,decoration=brace] (2.1,-0.1)--(0.3,-0.1);
\draw[decorate,decoration=brace] (4.0,-0.1)--(2.2,-0.1);
\end{tikzpicture}
\;\;\;
\text{ or }
\;\;\;
\begin{tikzpicture}[baseline=(current bounding box.center)] 
\draw[dashed] (-0.2,1)--(6.8,1);
\draw[dashed] (-0.2,0)--(6.8,0);
\draw[red, fill=red] (0,1) circle (3pt); \node[below] at (1,1) {$\ldots$}; \draw[red, fill=red] (2,1) circle (3pt);
\draw[blue, fill=blue] (2.3,1) circle (3pt); \node[below] at (3.3,1) {$\ldots$}; \draw[blue, fill=blue] (4.3,1) circle (3pt);
\draw[red, fill=red] (4.6,1) circle (3pt); \node[below] at (5.6,1) {$\ldots$}; \draw[red, fill=red] (6.6,1) circle (3pt);
\draw[blue, fill=blue] (2.7,0) circle (3pt); \node[above] at (3.3,0) {$\ldots$}; \draw[blue, fill=blue] (3.9,0) circle (3pt);
\node[above] at (1,1.1) {$p$};
\node[above] at (3.3,1.1) {$q$};
\node[above] at (5.6,1.1) {$r$};
\node[below] at (3.3,-0.1) {$s$};
\draw[decorate,decoration=brace] (-0.1,1.1)--(2.1,1.1);
\draw[decorate,decoration=brace] (2.2,1.1)--(4.4,1.1);
\draw[decorate,decoration=brace] (4.5,1.1)--(6.7,1.1);
\draw[decorate,decoration=brace] (4.0,-0.1)--(2.6,-0.1);
\end{tikzpicture}
\end{aligned}
\]
or the same sequences as above with the rows or colors swapped, where 
\[
p-q=s-r \quad\text{or}\quad p-q+r=-s
\]
respectively. In other words, the difference between the number of red and blue beads in the top row is equal to the difference in the number of red and blue beads in the bottom row.
\end{prop}
\begin{proof}
One can check that the above sequences of beads do in fact have a unique non-crossing matching. It is left to show that these are the only sequences for which this holds. We will do this by induction the difference between the number of red and blue beads in a row, $|s-r|$ or $s$ in the respective cases. 

As a base case, suppose there is an equal number of red and blue beads in each row. Then viewing each row individually, from Lemma \ref{lem:beadexist} we know there exist a matching of each row on its own. So there is a matching of the two rows with no arcs connecting them. For there to be a unique non-crossing matching then each row individually must have a unique non-crossing matching. Let us assume the leftmost bead on the top row is red. If there are no beads in the bottom row, we know the possible configurations of the top row from Proposition \ref{prop:beads1row} which agrees with the case in this proposition. 

Now suppose the bottom row is non-empty and the leftmost bead of the bottom row is red. We know there is a matching in which these beads match with blue beads to their right in their own row. Given this matching we construct a second matching
\[
\begin{aligned}
\begin{tikzpicture}[baseline=(current bounding box.center)] 
\draw[dashed] (-0.2,2.6)--(4.2,2.6); \draw[dashed] (-0.2,0)--(4.2,0);
\draw[red, fill=red] (0,2.6) circle (3pt); \node[below] at (1,2.6) {$\ldots$}; \draw[blue, fill=blue] (2,2.6) circle (3pt);  \node[below] at (3,2.6) {$\ldots$}; 
\draw[red, fill=red] (0,0) circle (3pt); \node[above] at (1,0) {$\ldots$}; \node[above] at (2,0) {$\ldots$}; \draw[blue, fill=blue] (3,0) circle (3pt);  \node[above] at (4,0) {$\ldots$}; 
 \draw[thick] (0,2.6) arc(180:360:1.0); 
 \draw[thick] (0,0) arc(180:0:1.5);
\end{tikzpicture}
\text{  and  }
\begin{tikzpicture}[baseline=(current bounding box.center)] 
\draw[dashed] (-0.2,2)--(4.2,2);\draw[dashed] (-0.2,0)--(4.2,0);
\draw[red, fill=red] (0,2) circle (3pt); \node[below] at (1,2) {$\ldots$}; \draw[blue, fill=blue] (2,2) circle (3pt);  \node[below] at (3,2) {$\ldots$}; 
\draw[red, fill=red] (0,0) circle (3pt); \node[above] at (1,0) {$\ldots$}; \node[above] at (2,0) {$\ldots$}; \draw[blue, fill=blue] (3,0) circle (3pt);  \node[above] at (4,0) {$\ldots$}; 
\draw[thick] (0,2)--(0,0); \draw[thick] (2,2)--(3,0);
\end{tikzpicture}
\end{aligned}
\]
by making the reds match with reds, and blues match with blues, across the rows. 

We see for there to be a unique non-crossing matching, the bottom row must start with a blue bead.  If the bottom row ends with a blue bead, we  similarly have two matchings
\[
\begin{aligned}
\begin{tikzpicture}[baseline=(current bounding box.center)] 
\draw[dashed] (-0.2,2)--(5.2,2); \draw[dashed] (-0.2,0)--(5.2,0);
\draw[red, fill=red] (0,2.0) circle (3pt); \node[below] at (1,2.0) {$\ldots$}; \draw[blue, fill=blue] (2,2.0) circle (3pt);  \node[below] at (3,2.0) {$\ldots$}; 
\draw[blue, fill=blue] (0,0) circle (3pt); \node[above] at (1,0) {$\ldots$}; \node[above] at (2,0) {$\ldots$}; \draw[red, fill=red] (3,0) circle (3pt);  \node[above] at (4,0) {$\ldots$}; \draw[blue, fill=blue] (5,0) circle (3pt);
 \draw[thick] (0,2.0) arc(180:360:1.0); 
 \draw[thick] (3,0) arc(180:0:1.0);
\end{tikzpicture}
\text{  and  }
\begin{tikzpicture}[baseline=(current bounding box.center)] 
\draw[dashed] (-0.2,2)--(5.2,2); \draw[dashed] (-0.2,0)--(5.2,0);
\draw[red, fill=red] (0,2.0) circle (3pt); \node[below] at (1,2.0) {$\ldots$}; \draw[blue, fill=blue] (2,2.0) circle (3pt);  \node[below] at (3,2.0) {$\ldots$}; 
\draw[blue, fill=blue] (0,0) circle (3pt); \node[above] at (1,0) {$\ldots$}; \node[above] at (2,0) {$\ldots$}; \draw[red, fill=red] (3,0) circle (3pt);  \node[above] at (4,0) {$\ldots$}; \draw[blue, fill=blue] (5,0) circle (3pt);
\draw[thick] (0,2)--(3,0); \draw[thick] (2,2)--(5,0);
\end{tikzpicture}
\end{aligned}
\]
where in the first matching no arcs connect the two rows. So the bottom row must end in a red bead. Applying this argument a third time we can show that for there to be a unique non-crossing matching, the top row must end in a blue bead.

The only sequences of beads satisfying these constraints, as well as those of Proposition \ref{prop:beads1row}, are of the form
\[
\begin{aligned}
\overbrace{
\begin{tikzpicture}[baseline=(current bounding box.center)] 
 \draw[dashed] (-0.2,0)--(2.2,0);
\draw[red, fill=red] (0,0) circle (3pt); \node[below] at (1,0) {$\ldots$}; \draw[red, fill=red] (2,0) circle (3pt);
\end{tikzpicture}
}^{p}
\overbrace{
\begin{tikzpicture}[baseline=(current bounding box.center)] 
 \draw[dashed] (-0.2,0)--(2.2,0);
\draw[blue, fill=blue] (0,0) circle (3pt); \node[below] at (1,0) {$\ldots$}; \draw[blue, fill=blue] (2,0) circle (3pt);
\end{tikzpicture}
}^{q} \\ 
\vspace{1cm}
\\
\underbrace{
\begin{tikzpicture}[baseline=(current bounding box.center)] 
 \draw[dashed] (-0.2,0)--(2.2,0);
\draw[blue, fill=blue] (0,0) circle (3pt); \node[above] at (1,0) {$\ldots$}; \draw[blue, fill=blue] (2,0) circle (3pt);
\end{tikzpicture}
}_{r}
\underbrace{
\begin{tikzpicture}[baseline=(current bounding box.center)]
 \draw[dashed] (-0.2,0)--(2.2,0); 
\draw[red, fill=red] (0,0) circle (3pt); \node[above] at (1,0) {$\ldots$}; \draw[red, fill=red] (2,0) circle (3pt);
\end{tikzpicture}
}_{s}
\end{aligned}
\]
with $q-p=r-s=0$, as desired.

Now suppose we are in the case where the are $k$ more blue beads than red in each row. Any matching must have at least $k$ arcs connecting the two rows. Given any matching, consider the leftmost such arc. Suppose it connects between two blue beads, the case where it connects between two red beads can be done by swapping all the colors. This divides the sequence of beads into two pieces
\[
\begin{tikzpicture}[baseline=(current bounding box.center)] 
 \draw[dashed] (-0.2,0)--(3.2,0);  \draw[dashed] (-0.2,2)--(3.2,2);
\node[below] at (0,2) {$\ldots$}; \node[below] at (1,2) {$\ldots$}; \draw[blue, fill=blue] (2,2) circle (3pt);  \node[below] at (3,2) {$\ldots$}; 
\node[above] at (0,0) {$\ldots$}; \draw[blue, fill=blue] (1,0) circle (3pt);  \node[above] at (2,0) {$\ldots$}; 
\draw[thick] (2,2)--(1,0);
\node at (0,1) {A}; \node at (3,1) {B};
\end{tikzpicture}
\]
For the whole sequence to have a unique non-crossing matching each piece must also have a unique non-crossing matching. Let us consider piece A. Since the difference between the number of blue and red beads in each row is zero in this piece, by induction we can see that this portion must take the form
\[
\begin{aligned}
\begin{tikzpicture}[baseline=(current bounding box.center)] 
 \draw[dashed] (-0.2,0)--(4.5,0); \draw[dashed] (-0.2,1)--(4.5,1);
\draw[red, fill=red] (0,1) circle (3pt); \node[below] at (1,1) {$\ldots$}; \draw[red, fill=red] (2,1) circle (3pt);
\draw[blue, fill=blue] (2.3,1) circle (3pt); \node[below] at (3.3,1) {$\ldots$}; \draw[blue, fill=blue] (4.3,1) circle (3pt);
\draw[blue, fill=blue] (0.4,0) circle (3pt); \node[above] at (1.1,0) {$\ldots$}; \draw[blue, fill=blue] (2,0) circle (3pt);
\draw[red, fill=red] (2.3,0) circle (3pt); \node[above] at (3.2,0) {$\ldots$}; \draw[red, fill=red] (3.9,0) circle (3pt);
\end{tikzpicture}
&
\text{ or }
&
\begin{tikzpicture}[baseline=(current bounding box.center)] 
 \draw[dashed] (-0.2,1)--(6.8,1);
\draw[red, fill=red] (0,1) circle (3pt); \node[below] at (1,1) {$\ldots$}; \draw[red, fill=red] (2,1) circle (3pt);
\draw[blue, fill=blue] (2.3,1) circle (3pt); \node[below] at (3.3,1) {$\ldots$}; \draw[blue, fill=blue] (4.3,1) circle (3pt);
\draw[red, fill=red] (4.6,1) circle (3pt); \node[below] at (5.6,1) {$\ldots$}; \draw[red, fill=red] (6.6,1) circle (3pt);
\end{tikzpicture}
\end{aligned}
\]
or the same sequences as above with the rows or colors swapped. Including the arc between the two blue beads, the only case in which we are unable to construct a second non-crossing matching is
\[
\begin{tikzpicture}[baseline=(current bounding box.center)] 
\draw[dashed] (-0.2,0)--(6.2,0);\draw[dashed] (-0.2,2)--(6.2,2);
\draw[red, fill=red] (0,2) circle (3pt); \node[below] at (1,2) {$\ldots$}; \draw[red, fill=red] (2,2) circle (3pt); \draw[blue, fill=blue] (3,2) circle (3pt);  \node[below] at (4,2) {$\ldots$}; \draw[blue, fill=blue] (5,2) circle (3pt); \draw[blue, fill=blue] (6,2) circle (3pt); 
\draw[blue, fill=blue] (4,0) circle (3pt);
\draw[thick] (6,2)--(4,0);
\end{tikzpicture}
\]
or the same sequences with the rows swapped. For example, if both rows are non-empty we have the two matchings 
\[
\resizebox{0.3\textwidth}{!}{
\begin{tikzpicture}[baseline=(current bounding box.center)] 
\draw[dashed] (-0.2,3)--(6.2,3);  \draw[dashed] (-0.2,0)--(6.2,0);
\draw[red, fill=red] (0,3) circle (3pt); \node[below] at (1,3) {$\ldots$}; \draw[red, fill=red] (2,3) circle (3pt); \draw[blue, fill=blue] (3,3) circle (3pt);  \node[below] at (4,3) {$\ldots$}; \draw[blue, fill=blue] (5,3) circle (3pt); \draw[blue, fill=blue] (6,3) circle (3pt); 
\draw[blue, fill=blue] (6,0) circle (3pt);
\draw[thick] (6,3)--(6,0);
\draw[blue, fill=blue] (0,0) circle (3pt); \node[above] at (1,0) {$\ldots$}; \draw[blue, fill=blue] (2,0) circle (3pt);
\draw[red, fill=red] (3,0) circle (3pt); \node[above] at (4,0) {$\ldots$}; \draw[red, fill=red] (5,0) circle (3pt);
\draw[thick] (0,0) arc(180:0:2.5);
\end{tikzpicture}
}
\text{ and }
\resizebox{0.3\textwidth}{!}{
\begin{tikzpicture}[baseline=(current bounding box.center)] 
\draw[dashed] (-0.2,2)--(6.2,2);  \draw[dashed] (-0.2,0)--(6.2,0);
\draw[red, fill=red] (0,2) circle (3pt); \node[below] at (1,2) {$\ldots$}; \draw[red, fill=red] (2,2) circle (3pt); \draw[blue, fill=blue] (3,2) circle (3pt);  \node[below] at (4,2) {$\ldots$}; \draw[blue, fill=blue] (5,2) circle (3pt); \draw[blue, fill=blue] (6,2) circle (3pt); 
\draw[blue, fill=blue] (6,0) circle (3pt);
\draw[thick] (6,2)--(0,0);
\draw[blue, fill=blue] (0,0) circle (3pt); \node[above] at (1,0) {$\ldots$}; \draw[blue, fill=blue] (2,0) circle (3pt);
\draw[red, fill=red] (3,0) circle (3pt); \node[above] at (4,0) {$\ldots$}; \draw[red, fill=red] (5,0) circle (3pt);
\draw[thick] (5,0) arc(180:0:0.5);
\end{tikzpicture}
}
\]
where the first is the unique non-crossing matching of the beads in piece A (we draw the arc of particular interest to us) as well as the arc connecting the two rightmost blue beads, and in the second the matching changes as shown.  

Now consider piece B (including the arc between the two blue beads). The difference between the number of blue and red beads in each row is $k-1$ in this piece, so again by induction we can see that this portion must take the forms
\[
\begin{aligned}
\begin{tikzpicture}[baseline=(current bounding box.center)] 
\draw[dashed] (-0.2,1)--(4.5,1);  \draw[dashed] (-0.2,0)--(4.5,0);
\draw[red, fill=red] (0,1) circle (3pt); \node[below] at (1,1) {$\ldots$}; \draw[red, fill=red] (2,1) circle (3pt);
\draw[blue, fill=blue] (2.3,1) circle (3pt); \node[below] at (3.3,1) {$\ldots$}; \draw[blue, fill=blue] (4.3,1) circle (3pt);
\draw[blue, fill=blue] (0.4,0) circle (3pt); \node[above] at (1.1,0) {$\ldots$}; \draw[blue, fill=blue] (2,0) circle (3pt);
\draw[red, fill=red] (2.3,0) circle (3pt); \node[above] at (3.2,0) {$\ldots$}; \draw[red, fill=red] (3.9,0) circle (3pt);
\end{tikzpicture}
&
\text{ or }
&
\begin{tikzpicture}[baseline=(current bounding box.center)] 
\draw[dashed] (-0.2,1)--(6.8,1);  \draw[dashed] (-0.2,0)--(6.8,0);
\draw[red, fill=red] (0,1) circle (3pt); \node[below] at (1,1) {$\ldots$}; \draw[red, fill=red] (2,1) circle (3pt);
\draw[blue, fill=blue] (2.3,1) circle (3pt); \node[below] at (3.3,1) {$\ldots$}; \draw[blue, fill=blue] (4.3,1) circle (3pt);
\draw[red, fill=red] (4.6,1) circle (3pt); \node[below] at (5.6,1) {$\ldots$}; \draw[red, fill=red] (6.6,1) circle (3pt);
\draw[blue, fill=blue] (2.7,0) circle (3pt); \node[above] at (3.3,0) {$\ldots$}; \draw[blue, fill=blue] (3.9,0) circle (3pt);
\end{tikzpicture}
\end{aligned}
\]
or the same sequences as above with the rows or colors swapped. Including the arc between the two blue beads, the only sequence of beads that still has a unique non-crossing matching is
\[
\begin{tikzpicture}[baseline=(current bounding box.center)] 
\draw[dashed] (-0.2,2)--(6.2,2);  \draw[dashed] (-0.2,0)--(6.2,0);
\draw[blue, fill=blue] (0,2) circle (3pt); \draw[blue, fill=blue] (1,2) circle (3pt);  \node[below] at (2,2) {$\ldots$}; \draw[blue, fill=blue] (3,2) circle (3pt); \draw[red, fill=red] (4,2) circle (3pt); \node[below] at (5,2) {$\ldots$}; \draw[red, fill=red] (6,2) circle (3pt);
\draw[blue, fill=blue] (2,0) circle (3pt); \draw[blue, fill=blue] (3,0) circle (3pt);  \node[above] at (4,0) {$\ldots$}; \draw[blue, fill=blue] (5,0) circle (3pt); 
\draw[thick] (0,2)--(2,0);
\end{tikzpicture}
\]
or the same configuration with the rows swapped.

Finally, combining the two pieces we see that the configuration of beads must be in the form given in the statement of the proposition.
\end{proof}

We can pull this constraint on matching back to a constraint on the partition $\bg = (\beta^{(1)}/\gamma^{(1)},\beta^{(2)}/\gamma^{(2)})$. Recall that the top row of beads corresponds to the singleton paths in the top boundary condition of the vertex model, which depends only on $\beta^{(1)}$ and $\beta^{(2)}$. Similarly, the bottom row of beads depends only on $\gamma^{(1)}$ and $\gamma^{(2)}$.  For simplicity, we will assume $l(\beta^{(1)}) = l(\beta^{(2)})=n$. Let $\delta_n=(n-1,n-2,\ldots,0)$ be the staircase partition of length $n$. Consider the strict partitions $\tilde \beta^{(1)}=\beta^{(1)}+\delta_n$ and $\tilde \beta^{(2)}=\beta^{(2)}+\delta_n$. Recall that each row of the partitions $\beta^{(1)}/\gamma^{(1)}, \beta^{(2)}/\gamma^{(2)}$ corresponds to a path in the vertex model. The length of the rows of $\tilde \beta^{(1)}$ and those of $\tilde \beta^{(2)}$ encode the number of horizontal steps the paths take. 

We can construct a third strict partition $\sigma_\beta$ whose parts are the parts of $\tilde \beta^{(1)}$ and those of $\tilde \beta^{(2)}$ sorted into decreasing order, along with the condition that anytime a part of $\tilde \beta^{(1)}$ is equal to a part of $\tilde \beta^{(2)}$ both are removed. The order of the parts in this partition gives the order of the singleton boundary paths from rightmost to leftmost. If we color the parts coming from $\tilde \beta^{(1)}$ blue and those coming from $\tilde \beta^{(2)}$ red then this is precisely the ordering of the sequence of colored beads associated to the top boundary. We see that the top row of the sequence of colored beads is equivalent to the order of the parts of $\sigma_\beta$ if we keep track of each part's color. The bottom row of the sequence of colored beads comes from the sorted partition $\sigma_\gamma$.

For example, when $\bg = ((5,5,1)/(2,1,0),(4,3,2)/(1,0,0))$. Then $\tilde \beta^{(1)} = (7,6,1)$, $\tilde \beta^{(2)} = (6,4,2)$, $\tilde \gamma^{(1)}=(4,2,0)$, and $\tilde \gamma^{(2)}=(3,1,0)$. The Young diagrams for $\sigma_\beta$ and $\sigma_\gamma$ are then
\[
\begin{aligned}
\sigma_\beta = 
	\ytableausetup{nosmalltableaux}
	\ytableausetup{nobaseline}
	\ytableausetup{aligntableaux=center}
	\begin{ytableau}
	*(blue)  \\
	*(red)  & *(red) \\
	*(red)  & *(red) & *(red)  & *(red) \\
	*(blue) & *(blue) & *(blue) & *(blue) & *(blue) & *(blue) & *(blue)
\end{ytableau},\;\;\;
& 
\sigma_\gamma = 
	\ytableausetup{nosmalltableaux}
	\ytableausetup{nobaseline}
	\ytableausetup{aligntableaux=center}
	\begin{ytableau}
	*(red)  \\
	*(blue)  & *(blue) \\
	*(red)  & *(red) & *(red) \\
	*(blue) & *(blue) & *(blue) & *(blue)
\end{ytableau}
\end{aligned}
\]
corresponding to the vertex model boundary condition and sequence of colored beads
\[
\resizebox{0.3\textwidth}{!}{
\begin{tikzpicture}[baseline=(current bounding box.center)] 
\draw[help lines] (0,0) grid (8,1);
\draw[help lines] (0,2) grid (8,3);
\draw[help lines] (0,1)--(0,2); \draw[help lines] (8,1)--(8,2);
\draw[blue] (7.6,2.4)--(7.6,3); \draw[blue] (6.6,2.4)--(6.6,3); \draw[blue] (1.6,2.4)--(1.6,3); \draw[red] (6.4,2.6)--(6.4,3); \draw[red] (4.4,2.6)--(4.4,3); \draw[red] (2.4,2.6)--(2.4,3);
\draw[blue] (4.4,0)--(4.4,0.6); \draw[blue] (2.4,0)--(2.4,0.6); \draw[blue] (0.4,0)--(0.4,0.6); \draw[red] (3.6,0)--(3.6,0.4); \draw[red] (1.6,0)--(1.6,0.4); \draw[red] (0.6,0)--(0.6,0.4);
\end{tikzpicture}
}\;\;\;\;\;\;\;\;
\begin{tikzpicture}[baseline=(current bounding box.center)] 
\draw[dashed] (-0.2,2)--(3.2,2); \draw[dashed] (-0.2,0)--(3.2,0);
\draw[blue, fill=blue] (0,2) circle (3pt);\draw[red, fill=red] (1,2) circle (3pt); \draw[red, fill=red] (2,2) circle (3pt); \draw[blue, fill=blue] (3,2) circle (3pt);
\draw[red, fill=red] (0,0) circle (3pt); \draw[blue, fill=blue] (1,0) circle (3pt); \draw[red, fill=red] (2,0) circle (3pt); \draw[blue, fill=blue] (3,0) circle (3pt);
\end{tikzpicture}
\]
 The constraints on the sequence of beads can then be translated to constraints on $\sigma_\beta$ and $\sigma_\gamma$.


\begin{thebibliography}{10}


\bibitem{ABW}
A.~Aggarwal, A.~Borodin, and M.~Wheeler.
\newblock Colored Fermionic Vertex Models and Symmetric Functions.
\newblock {\em Communications of the American Mathematical Society}, 2021.

\bibitem{A} P.~Alexandersson.
\newblock LLT polynomials, elementary symmetric functions and melting lollipops. 
\newblock {\em J. Algebraic Combin.}, 2020.

\bibitem{shuffle}
J.~Blasiak, M.~Haiman, J.~Morse, A.~Pun, and G.~Seelinger.
\newblock A shuffle theorem for paths under any line.
\newblock {\em Forum of Mathematics, Pi}, 2023.

\bibitem{BW}
A.~Borodin and M.~Wheeler.
\newblock Nonsymmetric Macdonald polynomials via integrable vertex models.
\newblock {\em Transactions of the American Mathematical Society}, 375 (12), 8353-8397, 2022.

\bibitem{BBBG}
B.~Brubaker, V.~Buciumas, D.~Bump, and H.~Gustafsson.
\newblock Colored five-vertex models and Demazure atoms.
\newblock {\em J. Comb. Theory A}, 2019.

\bibitem{BBF}
B.~Brubaker, D.~Bump, and S.~Friedberg.
\newblock Schur polynomials and the Yang-Baxter equation.
\newblock {\em Communications in mathematical physics}, 308(2):281, 2011.

\bibitem{CGKM}
S.~Corteel, A.~Gitlin, D.~Keating, and J.~Meza.
\newblock A vertex model for LLT polynomials.
\newblock {\em International Mathematics Research Notices}, 2022 (20), 2022.

\bibitem{GW}
A.~Garbali and M.~Wheeler.
\newblock Modified {M}acdonald polynomials and integrability.
\newblock {\em Communications in Mathematical Physics}, pages 1--68, 2020.

\bibitem{HHLRU}
J.~Haglund, M.~Haiman, N.~Loehr, J.~B. Remmel, A.~Ulyanov, et~al.
\newblock A combinatorial formula for the character of the diagonal coinvariants.
\newblock {\em Duke Mathematical Journal}, 126(2):195--232, 2005.

\bibitem{LLT}
A.~Lascoux, B.~Leclerc, and J.-Y. Thibon.
\newblock Ribbon tableaux, Hall--Littlewood functions, quantum affine algebras, and unipotent varieties.
\newblock {\em Journal of Mathematical Physics}, 38(2):1041--1068, 1997.

\bibitem{Lee}
S.~Lee.
\newblock Linear relations on LLT polynomials and their $k$-Schur positivity for $k=2$.
\newblock {\em J Algebr Comb} 53, 973–990, 2021.



\bibitem{ChrisThesis}
C.~R. Miller.
\newblock {\em On the $k$-Schur Positivity of $k$-Bandwidth LLT Polynomials}.
\newblock PhD thesis, UC Berkeley, 2019.

\bibitem{reshNotes}
N.~Reshetikhin.
\newblock Lectures on the integrability of the six-vertex model.
\newblock {\em Exact methods in low-dimensional statistical physics and quantum computing}, pages 197--266, 2010.

\bibitem{SW}
D.~W. Stanton and D.~E. White.
\newblock A Schensted algorithm for rim hook tableaux.
\newblock {\em Journal of Combinatorial Theory, Series A}, 40(2):211--247, 1985.

\bibitem{Tom} F.~Tom.
\newblock A combinatorial Schur expansion of triangle-free horizontal-strip LLT polynomials.
\newblock {\em Combinatorial Theory}, Vol. 1, No. 14, 2021.

\end{thebibliography}
\end{document}